\newtheorem{theorem}{Theorem}
\newtheorem{lemma}[theorem]{Lemma}
\newtheorem{proposition}[theorem]{Proposition}
\theoremstyle{definition}
\newtheorem{assumption}[theorem]{Assumption}
\newcommand{\eref}[1]{(\ref{e.#1})}
\newcommand{\tref}[1]{Theorem \ref{t.#1}}
\newcommand{\lref}[1]{Lemma \ref{l.#1}}
\newcommand{\pref}[1]{Proposition \ref{p.#1}}
\newcommand{\cref}[1]{Corollary \ref{c.#1}}
\newcommand{\partref}[1]{\ref{part.#1}}
\newcommand{\aref}[1]{Appendix \ref{a.#1}}
\newcommand{\asref}[1]{Assumption \ref{as.#1}}
\numberwithin{theorem}{section}
\numberwithin{equation}{section}
\newcommand{\R}{\mathbb{R}}
\newcommand{\E}{\mathbb{E}}
\renewcommand{\P}{\mathbb{P}}
\newcommand{\grad}{\nabla}
\newcommand{\diam}{\textup{diam}}
\def\XXint#1#2#3{{\setbox0=\hbox{$#1{#2#3}{\int}$ }
\vcenter{\hbox{$#2#3$ }}\kern-.6\wd0}}
\newcommand{\ep}{\varepsilon}
\newcommand{\tr}{\textup{tr}}
\begin{document}
\title{Mean Curvature Flow with Positive Random Forcing in 2-D}
\author{William M Feldman}
\address{Institute for Advanced Study, 1 Einstein Dr, Princeton, NJ 08540}
\email{wfeldman@math.ias.edu}
\maketitle
\begin{abstract}
We consider the forced mean curvature flow in $2$-d, finite range of dependence and positive random forcing. We prove flatness and existence of effective speed for initially flat propagating fronts.  This is the analogue, in random media, of a result of Caffarelli and Monneau \cite{CaffarelliMonneau}.  The main new tools are a large scale Lipschitz estimate for the arrival time function, and a quantitative uniqueness result which does not use uniform local regularity. 
\end{abstract}
\section{Introduction}
Consider the interface evolution equation
\begin{equation}\label{e.level set}
 u_t =\tr\left[(I - \tfrac{Du \otimes Du}{|Du|^2})D^2u\right] + c(x)|Du|   \ \hbox{ in } \R^2 \times (0,\infty).
\end{equation}
 The forcing term $c >0$ is a random field satisfying a finite range of dependence assumption.  The PDE \eref{level set} is the level set form of the front propagation problem,
 \begin{equation}\label{e.front}
V_n = -\kappa + c(x) 
\end{equation}
 where $V_n$ is the outward normal velocity of an evolving (oriented) hypersurface $\Gamma_t = \partial S_t$ and $\kappa$ is the mean curvature (positive for outward oriented boundaries of convex regions).  We study the long time behavior of the interface $\Gamma_t$ when $S_0$ is a half-space $\{ x \cdot e \leq 0\}$.  We will show that initially flat fronts stay approximately flat and propagate with an effective speed $\bar{c}(e)$.
 \begin{equation}\label{e.gammahom}
  \frac{1}{t} \Gamma_t \to \{ x \cdot e = \bar{c}(e) \} \ \hbox{ as } \ t \to \infty.
  \end{equation}

 Under hyperbolic space-time rescaling one can see that the study of the long-time behavior of \eref{level set} is closely related to the asymptotic behavior as $\ep \to 0$ of the problem,
 \begin{equation}\label{e.level set hom ep}
 u^\ep_t = \ep\tr\left[(I - \tfrac{Du^\ep \otimes Du^\ep}{|Du^\ep|^2})D^2u^\ep\right] + c(\tfrac{x}{\ep})|Du^\ep| \ \hbox{ in } \R^2 \times (0,\infty)
\end{equation}
with initial data $u(x,0) = u_0(x)$.  In this context one could hope to show that as $\ep \to 0$ the solutions $u^\ep$ converge to $\bar{u}$ the solutions of the ``homogenized problem",
\begin{equation}\label{e.level set hom}
  \bar{u}_t = \bar{c}(\tfrac{D\bar{u}}{|D\bar{u}|})|D\bar{u}|  \ \hbox{ in } \R^2 \times (0,\infty) \ \hbox{ with } \ \bar{u}(x,0) = u_0(x).
 \end{equation}
Here the asymptotic speeds $\bar{c}(e)$ appear again, now as the level set velocity for the homogenized problem.

 This problem was considered in periodic media by Lions and Souganidis \cite{LionsSouganidis} who showed the existence of asymptotic front speeds in $d \geq 2$ under the following condition
 \begin{equation}
 \inf_{x \in \R^d} (c(x)^2-(d-1)|Dc(x)|) >0.
 \end{equation}
 This condition, which we call the Lions-Souganidis condition, as in \cite{ArmstrongCardaliaguet}, is necessary and sufficient to obtain uniform Lipschitz estimates on the solutions of the approximate corrector problem associated with \eref{level set}.  However it was not clear whether the Lions-Souganidis condition was necessary for homogenization.  Caffarelli and Monneau subsequently proved in \cite{CaffarelliMonneau} that, in $d=2$, it is sufficient that $c>0$, while, in $d\geq 3$, they constructed examples with positive velocity and linearly growing fingers, i.e. non-homogenization.  The ideas of their paper will be discussed further below.  Recently Armstrong and Cardaliaguet~\cite{ArmstrongCardaliaguet} have proven that homogenization holds for \eref{level set} in random media with a finite range of dependence property under the Lions-Souganidis condition.  Again the main role of the Lions-Souganidis condition is to guarantee pointwise Lipschitz estimates of the arrival time function.

 The contribution of this paper has two parts.  The first is to point out that the geometric argument of Caffarelli and Monneau~\cite{CaffarelliMonneau} in $d=2$ showing flatness of interfaces in periodic media is in fact a result of regularity theory. In particular it is a large scale (deterministic) Lipschitz estimate for the arrival time function.  The second is to explain that, as is typical in homogenization theory, a large scale regularity result is sufficient to obtain quantitative results.  In particular we are able to adapt the method of Armstrong and Cardaliaguet~\cite{ArmstrongCardaliaguet} to use only large scale Lipschitz regularity. 
 
 In short, our paper extends Armstrong and Cardaliaguet~\cite{ArmstrongCardaliaguet} in random environments, in the same way that Caffarelli and Monneau~\cite{CaffarelliMonneau} extended Lions and Souganidis~\cite{LionsSouganidis} in periodic environments.
 
\begin{theorem}\label{t.main}
Suppose that $c: \R^2 \to (0,+\infty)$ is an $\R^2$-stationary random field with finite range of dependence, almost surely bounded $c_{\min} \leq c(x) \leq c_{\max}$, and Lipschitz continuous $\|Dc\|_{L^\infty(\R^2)} < + \infty$.  Then for every $e \in \R^2$ there is a deterministic asymptotic speed $\bar{c}(e)$ so that the arrival time at point $x$, $m(x)$, of the front started from $\{ x \cdot e \leq 0\}$ satisfies,
\[ \P( | m(te) - \E [m(te)]| > \lambda t^{2/3} ) \leq Ce^{-C^{-1}\lambda^2} \ \hbox{ and } \ |\E[m(te)] - \frac{1}{\bar{c}(e)}t| \leq Ct^{2/3}.\]
Furthermore the effective velocity $\bar{c} : S^1 \to (0,\infty)$ is continuous with logarithmic modulus of continuity.
\end{theorem}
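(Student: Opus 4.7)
My plan follows the broad strategy of Armstrong--Cardaliaguet, adapted to the degenerate regime where the Lions--Souganidis condition fails and only $c>0$ holds. The core move is to replace every use of pointwise Lipschitz regularity of the arrival time by a \emph{large scale} Lipschitz estimate, mirroring how Caffarelli--Monneau replaced the pointwise gradient bounds of Lions--Souganidis by a geometric flatness argument in the periodic $d=2$ setting.

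\emph{Step 1 (main technical step).} Establish a large scale Lipschitz estimate for $m$: there is a deterministic $K$ and a stationary random scale $L(x)$ with high-probability tails such that $|m(x)-m(y)|\le K|x-y|$ whenever $|x-y|\ge L(x)$. The starting point is the Caffarelli--Monneau flatness principle in two dimensions, which is purely topological: a convex bounded region in $\R^2$ evolved by $V_n=-\kappa+c$ with $c\ge c_{\min}>0$ cannot grow long thin fingers because the curvature penalty kicks in on any curve whose tangent turns by more than $\pi$. I would re-read this geometric estimate as a deterministic inequality controlling oscillation of $m$ on a ball of radius $R$ by $CR$, conditional only on $c_{\min},c_{\max},\|Dc\|_\infty$. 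No periodicity is actually used, so the estimate transfers directly to the random setting, giving large scale Lipschitz control.

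\emph{Step 2 (concentration).} With the large scale Lipschitz estimate in hand, combine finite range of dependence with an Efron--Stein / martingale argument as in Armstrong--Cardaliaguet. Tile $\R^2$ by boxes of intermediate scale $r$, and estimate the effect on $m(te)$ of resampling $c$ inside one box: by the Lipschitz bound this perturbation is $O(r)$, while only boxes in an $O(r)$-neighborhood of the optimal path can contribute, giving $O(t/r)$ boxes of nontrivial influence. Summing the squared influences gives variance $\lesssim t r$; optimizing against the error $r$ coming from the scale at which the large scale Lipschitz bound kicks in yields the $t^{2/3}$ Gaussian concentration. The quantitative uniqueness result is used here to close the loop: without uniform regularity, I must show that two supersolutions constructed from rescaled planar fronts must agree up to the stated error, and this comparison is run only at scales above $L(x)$.

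\emph{Step 3 (effective speed, bias, continuity).} Existence of the deterministic $\bar{c}(e)$ follows from stationarity, subadditivity of the arrival time (via the comparison principle, concatenating fronts along $e$) and the ergodic theorem applied to $m(te)/t$. The concentration from Step 2 then bootstraps into the bias bound $|\E m(te)-t/\bar c(e)|\le Ct^{2/3}$ by the usual homogenization trick: compare $m$ on a box of scale $s\gg t^{2/3}$ with the planar approximation $(x\cdot e)/\bar c(e)$, and use the quantitative uniqueness with Lipschitz control at scale $L$ to propagate the $s^{2/3}$ fluctuation into a global bias. Continuity of $\bar c:S^1\to(0,\infty)$ with logarithmic modulus follows by comparing fronts with normals $e, e'$: the two initial half-spaces differ by a wedge of opening $|e-e'|$, and the flatness/curvature trade-off in 2D leads to a logarithmic, rather than algebraic, propagation of this opening through the flow.

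\emph{Main obstacle.} The real difficulty is Step 1. Caffarelli--Monneau's argument exploits a compactness that in the periodic case comes for free from periodicity, but in the random setting must be manufactured from finite range of dependence together with the topological rigidity of curves in $\R^2$. Concretely, one has to rule out, with controlled probability, that a front can grow a finger far beyond the ambient speed by riding an atypical excursion of $c$; the presence of the curvature term $-\kappa$ must be turned into a quantitative penalty against such excursions. This is also where the ``quantitative uniqueness without uniform local regularity'' does its work, since the standard comparison arguments on fingers rely precisely on the regularity we no longer have.
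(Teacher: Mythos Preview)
Your proposal misidentifies where the genuine difficulty lies, and this leads to a real gap.

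\textbf{Step 1 is not the obstacle.} The Caffarelli--Monneau filling-time argument is purely deterministic: it uses only $c\ge c_{\min}>0$, the curvature term, and two-dimensional topology (a path in $S_t$ from $x_0$ to $\partial B_R(x_0)$). No periodicity and no probabilistic input enter. The conclusion is a \emph{deterministic} estimate $|m(x)-m(y)|\le \tau+L|x-y|$ with $\tau,L$ depending only on $c_{\min}$, valid for every coefficient field in $\Omega$. There is no random scale $L(x)$, no tail bound to prove, and finite range of dependence plays no role here. Your ``Main obstacle'' paragraph is chasing a phantom.

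\textbf{The real obstacle is the bias estimate in Step 3, specifically the localized uniqueness.} To show approximate additivity $\E m((t+s)e)\approx \E m(te)+\E m(se)$ you must compare the front started from $\{x\cdot e\le 0\}$, run to level $t$, with the front started from the hyperplane $\{x\cdot e\le t\}$. These agree only on a bounded region (where concentration puts the random level set near its mean), so you need a finite-speed-of-propagation statement: ordering on $B_R$ at time $0$ implies ordering on a smaller ball up to time $s$. The standard doubling-of-variables proof of this requires a \emph{pointwise} Lipschitz bound on $m$, which here is only $e^{\|Dc\|_\infty t}$. Plugging that in gives a localization radius $R\gtrsim e^{Cs}$, and the union-bound over that many points kills the concentration estimate---just barely, but it does. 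This is the step your proposal does not address; ``run the comparison only at scales above $L(x)$'' does not fix it, because the comparison argument itself consumes local regularity at the PDE level.

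The paper's resolution is an iteration: split $[0,s]$ into $n$ subintervals, apply the exponential localization on each (radius $e^{Cs/n}$), and between steps use the deterministic large-scale Lipschitz bound to re-regularize the sublevel sets back to $C^{1,1}$ at cost $O(\tau)$ per step. The total localization radius becomes $ne^{Cs/n}$ and the accumulated error is $O(n)$; choosing $n\sim s^{2/3}$ yields the $t^{2/3}$ bias. This iteration is the genuinely new idea and is absent from your outline. It is also the source of the merely logarithmic modulus for $\bar c$: the localization radius is still essentially exponential in $s^{1/3}$, so comparing two half-spaces with normals $e,e'$ forces $s\sim |\log|e-e'||^3$.

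\textbf{On Step 2.} The paper's concentration is $t^{1/2}$, not $t^{2/3}$, obtained by Azuma for the martingale $\E[m\,|\,\mathcal G_t]$ with $\mathcal G_t$ the filtration of sublevel sets $\{m\le t\}$; the large-scale Lipschitz bound gives bounded increments directly. Your box-resampling Efron--Stein sketch could plausibly be made to work, but the claimed optimization yielding $t^{2/3}$ is unmotivated (there is no competing error ``from the scale at which the large scale Lipschitz bound kicks in'', since that scale is $O(1)$ deterministically). The $t^{2/3}$ in the theorem statement comes from the bias, not the fluctuations.
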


Next we explain the connection of \tref{main} with the homogenization \eref{level set hom ep} to \eref{level set hom} for general initial data.  The general framework for quantitative homogenization of viscous Hamilton-Jacobi equations laid out in \cite{ArmstrongCardaliaguet} appears not to apply due to the extremely weak only logarithmic continuity estimate on $\bar{c}$.  Nonetheless we expect that the standard connection between metric problems and the approximate corrector problem, see Armstrong and Souganidis \cite[Theorem 1]{levelset} or \cite[Proposition 2.4]{ArmstrongCardaliaguet}, still holds in non-quantitative form, and one should be able to obtain almost sure convergence of \eref{level set hom ep} to \eref{level set hom}.  We did not carry out the details here.

\subsection{Literature} Here we give a slightly broader overview of the literature, including the results we have already mentioned.  The first result of homogenization for the forced mean curvature in periodic environments was by Lions and Souganidis \cite{LionsSouganidis}, under their strong coercivity condition guaranteeing Lipschitz estimates.  Dirr, Karali and Yip \cite{DirrKaraliYip} constructed pulsating wave solutions in all dimensions for $V_n = -\kappa + \delta c$ with smooth $c$ (no positivity required) and $\delta$ small.  Cardaliaguet, Lions and Souganidis \cite{CardaliaguetLionsSouganidis} proved homogenization in dimension $2$ with a weak and non-perturbative positivity condition allowing signed $c$, Cesaroni and Novaga \cite{CesaroniNovaga} further weakened this condition in dimension $2$ and also constructed a maximal speed traveling wave in higher dimensional laminar media.  Caffarelli and Monneau \cite{CaffarelliMonneau} constructed counter-examples to homogenization in $d\geq 3$ for some positive $c$, and proved homogenization in $d=2$ for all $c>0$.  As mentioned before, our result is analogous to theirs, but in random media.  In the direction of understanding the nature of non-homogenization, and potentially splitting non-flat traveling front solutions into traveling waves of multiple speeds, Kim and Gao \cite{KimGao} recently showed the existence of head and tail speeds that depend continuously on the normal direction and construct maximal and minimal speed traveling wave solutions in laminar media by a new proof.  

Finally we discuss random media, where the only result for the forced mean curvature flow is by Armstrong and Cardaliaguet \cite{ArmstrongCardaliaguet}.  As described above, they prove homogenization under the strong Lions-Souganidis coercivity condition in all dimensions.  We extend this result in $d=2$ allowing only the weak Caffarelli-Monneau coercivity condition $c >0$.

 \subsection{Physical motivation and related open problems}  The forced mean curvature flow \eref{front} is a model for interface motion in inhomogeneous media, e.g. contact lines of liquid droplets on a rough surface, fluid-fluid phase interface motion in a porous medium, or domain boundaries in random magnetic material \cite{Kardar_1998}.  We consider an (oriented) interface $\Gamma_t$ pushed through the medium by a driving force $F$, moving by normal velocity
 \begin{equation}\label{e.front}
V_n = -\kappa + c(x)+F.
\end{equation}
Pinning defects in the media $c$ compete with surface tension $\kappa$ and large scale forcing $F$ (e.g. contact angle, pressure, applied magnetic field).  When the forcing $F$ is too weak interfaces are pinned, increasing the forcing there is a critical transition and interfaces de-pin and start moving.  

There are many issues which are not clear, at least from a mathematical perspective, about this critical transition.  Define the critical forcing for the transition from zero to positive speed (this may, or may not, be the same as de-pinning)
\[ F_{*,s}(e,c) = \inf \left\{ F: 
\begin{array}{c}
\hbox{the solution of \eref{front} with initial data $\Gamma_0 = \partial \{x \cdot e \leq 0\}$ has} \\
\hbox{$c_*(F,c) = \liminf_{t \to \infty} \inf \frac{1}{t}\Gamma_t \cdot e >0$}
\end{array}
\right\}. \]
 When $F>F_{*,s}$ initially flat fronts separate from their initial data at positive speed.  Now one can ask about the scale of the transversal fluctuations of the interfaces, either moving or pinned.  Physicists have various conjectures on this topic especially at criticality, see for example \cites{Stepanow1994,Erta__1994,Kardar_1998}. The most basic question we can ask in this direction is that of homogenization.  Does \eref{gammahom} hold whenever $F>F_{*,s}(e,c)$?  Or, weaker, when $F> \sup_{e'}F_{*,s}(e',c)$? Or, even weaker, when $F>F_{*,s}(e,c)$ and $c$ is rotation invariant in law?  Do the answers depend on dimension?
 
 A counter-example to homogenization under the first condition can be constructed in $2$-d periodic and laminar medium in the spirit of \cites{CardaliaguetLionsSouganidis,CaffarelliMonneau}.  As explored by \cites{CaffarelliMonneau,CesaroniNovaga,KimGao}, informally speaking, non-homogenization at a given direction should imply pinning at a transversal direction (in $2$-d).  The example of \cite{CaffarelliMonneau} is a counter-example to the second question in $d \geq 3$.  The second question in $2$-d, and the third question in $d\geq 3$ are open.
 

Our result \tref{main} is a step towards addressing this difficult, and more general, open issue.  Our new result is in $2$-d, we do not go all the way down to the pinning transition $\sup_{e'}F_{*,s}(e',c)$, instead we consider the (weakly) coercive case $c(x)+F>0$.  However, since we are able to deal with only large scale Lipschitz estimates (in fact larger than unit scale) for the arrival time, we expect the new ideas developed here to be useful in pushing the analysis down to $F_{*,s}$ where one would at best expect large scale Lipschitz estimates above, now, a random length scale.  Still, there is a huge gap to bridge and we consider this to be a difficult and interesting open question.

\subsection{Acknowledgments}  Thanks Charlie Smart and Pierre Cardaliaguet for helpful conversations.    Thanks to Takis Souganidis for helpful conversations and especially for pointing out the small scale Lipschitz estimate.  Thanks to Inwon Kim for helpful comments on the manuscript. 

\subsection{Support} The author appreciates the support of the Friends of the Institute for Advanced Study and the NSF RTG grant DMS-1246999. 

\section{Set Up and Preliminary Results}

 \subsection{Viscosity Solutions}  We use throughout the paper the notion of viscosity solutions for second order degenerate elliptic (and geometric) equations.  See \cite{GGIS,Sato,CaffarelliMonneau} for proof of comparison principle in this setting.

 \subsection{The random medium}  We lay out the precise assumptions on the random medium.  We require that there are $0<c_{\min} \leq c_{\max} < +\infty$ and $L_0 <+\infty$ so that the following hold
 \begin{equation}\label{e.c assumptions}
  0 < c_{\min} \leq c(x) \leq c_{\max} \ \hbox{ for all $x \in \R^d$ and } \ \|Dc\|_\infty \leq L_0.
  \end{equation}
For concreteness, our probability space $\Omega$ can be taken as the collection of all such coefficient fields
 \[ \Omega := \{ c: \R^d \to (0,\infty) : \ \hbox{\eref{c assumptions} holds} \}.\]
 We associate with $\Omega$ a family of (cylinder) $\sigma$-algebras $\mathcal{F}(U)$ for $U \subset \R^d$ a Borel set,
 \[ \mathcal{F}(U): = \sigma(c \mapsto c(x): x \in U)\]
 The largest of these $\sigma$-algebras is $\mathcal{F}(\R^d)$ which, if we refer to it, will be just called $\mathcal{F}$.  The underlying physical space $\R^d$ naturally acts on $\Omega$ by translations, for each $x \in \R^d$ we define $T_y : \Omega \to \Omega$ by,
 \[ (T_yc)(\cdot) := c(\cdot+y).\]
 One can check easily that this is indeed a group action.
 
 \medskip
 
 Now we suppose that we are given a probability measure $\P$ on the measurable space $(\Omega,\mathcal{F})$, which satisfies the following properties:
 \begin{enumerate}[label = $\circ$]
 \item Stationarity: for every $y \in \R^d$ and every $E \in \Omega$,
 \[ \P(E) = \P(T_y(E)).\]
 \item $1$-dependence: for every two Borel sets $U,V$ of $\R^d$ with $\textup{dist}(U,V) \geq 1$,
 \[ \mathcal{F}(U) \ \hbox{ and }  \ \mathcal{F}(V) \ \hbox{ are $\P$-independent.}\]
 \end{enumerate}
 The regularity estimates that we prove in Section~\ref{sec: Properties of the Arrival Time} are uniform in $\Omega$ and therefore are not probabilistic in nature.  The assumptions of stationarity and $1$-dependence of the probability measure $\P$ will come into play in the remainder of the paper Sections~\ref{sec: Estimate of the Random Fluctuations}-\ref{s.deterministic}.  
 
 \subsection{Notations} Constants which depend at most on $c_{\min}$, $c_{\max}$, $d$ and $L_0$ will be called universal and will be written usually as $C$.  The value of $C$ may change from line to line.

  \section{Properties of the Arrival Time}\label{sec: Properties of the Arrival Time}
In this section we introduce the arrival time problem, explain its relation with the front propagation problem \eref{level set}, and prove some fundamental regularity properties.  The main new result in this section is a large scale (larger than unit scale) Lipschitz estimate of the arrival time function.  As explained in the introduction this result is the key to the remainder of the paper.
 
 \subsection{The arrival time problem}  In this section we introduce the arrival time problem starting from a nonempty closed set $S \subset \R^d$ with smooth boundary satisfying the following $C^{1,1}$ regularity/smallness condition.
 \begin{assumption}\label{as.Sassumption}
$S$ has interior tangent balls of radius $R_0  \geq 1$ and exterior tangent balls of radius $1$ at every boundary point.
 \end{assumption}
For now $R_0$ is a free parameter in the regularity condition, but we will soon fix $R_0 = \frac{C(d)}{c_{\min}} \vee 2$, chosen so that a ball of radius $R_0/8$ moving with velocity at most $c_{\min}/2$ in any direction is a subsolution of \eref{level set}.  We will keep that value for the remainder of the paper.

 The arrival time is the maximal subsolution of,
 \begin{equation}\label{e.arrival time}
 \left\{
 \begin{array}{lll}
 -\tr\left[(I - \tfrac{Dm \otimes Dm}{|Dm|^2})D^2m\right] + c(x) |Dm| =  1 & \hbox{ in } & \R^d \setminus S \vspace{1.5mm}\\
 m = 0 & \hbox{ on } & S.
 \end{array}\right.
 \end{equation}
 When we wish to emphasize all the dependencies of $m$ we write $m(x,S,c)$. 
 
  Note that this problem is typically referred to as the {metric problem} in the Hamilton-Jacobi literature \cite{ArmstrongCardaliaguet, ArmstrongSouganidis}.  In the case of the mean curvature flow it is more natural to think of $m(x,S)$ as the first time that the evolving region started from $S$ hits the point $x$.  For this reason we call $m$ the arrival time.  To clarify this connection between the arrival time problem and the front propagation problem we make note of the following:
 \begin{lemma}
 Let $m$ solve the arrival time problem \eref{arrival time}.  Then
 \[ u(x,t) =t- t \wedge m(x)  \]
 solves the level set equation for the forced mean curvature flow \eref{level set}.
 \end{lemma}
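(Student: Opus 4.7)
The plan is to observe first that
\[ u(x,t) = t - t \wedge m(x) = (t-m(x))_+, \]
so that $u$ vanishes on the unswept region $\{m > t\}$ and equals $t - m$ on the swept region $\{m < t\}$, matching continuously across the front $\{m = t\}$. I would verify the viscosity sub- and supersolution property on each of these two open regions separately, then patch at the moving interface.

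On the open set $\{m(x) < t\}$, $u$ inherits its viscosity structure from $m$: formally $u_t = 1$, $Du = -Dm$, $D^2u = -D^2m$, and since the operator $p \mapsto \tr[(I - \tfrac{p \otimes p}{|p|^2})M]$ is even in $p$, the level-set equation \eref{level set} for $u$ reduces exactly to the arrival-time equation \eref{arrival time} for $m$. Passing to viscosity form is done by reading test functions $\varphi(x,t)$ for $u$ as $t - \varphi(x,t)$ for $m$ and applying the arrival-time equation at the corresponding touching point. On the open set $\{m(x) > t\}$, $u \equiv 0$ gives both derivatives equal to zero, so the equation holds trivially at smooth test points once one interprets the singular operator at $Du = 0$ with its proper geometric semicontinuous envelopes $F_*, F^*$ as in \cite{GGIS,CaffarelliMonneau}.

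To patch at the front $\{m = t\}$, I would rewrite $u = \max(t - m,\, 0)$ and invoke the classical principle, valid for geometric equations interpreted in the $F_*, F^*$ sense, that pointwise maxima of subsolutions are subsolutions and pointwise minima of supersolutions are supersolutions. Combined with the two previous steps, this yields the subsolution property of $u$; for the supersolution property one recalls that $m$, being the maximal subsolution of \eref{arrival time}, is also a supersolution by the standard comparison-based argument. The initial datum $u(\cdot,0) \equiv 0$ and the vanishing of $u$ on $S$ up to time $t$ when $S \subset \{m \leq t\}$ are immediate from the formula.

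The only point requiring care, and the one I expect to be the main (minor) obstacle, is the verification at front points $\{m=t\}$ where $Du$ may jump or vanish; however this is exactly what the geometric envelope formalism was designed to absorb, and the argument is by now routine in the literature on the level-set method for mean curvature flow.
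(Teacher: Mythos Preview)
The paper states this lemma without proof, treating it as a standard link between the arrival-time and level-set formulations, so there is no written argument in the paper to compare against. Your plan is essentially the right one and would be accepted as a proof sketch.

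There is one concrete gap worth naming. Writing $u=\max(t-m,0)$ and using that maxima of subsolutions are subsolutions gives the subsolution half cleanly. For the supersolution half you record only that $m$ is a supersolution of \eref{arrival time}; this makes $t-m$ and $0$ each supersolutions of \eref{level set}, but $u$ is their \emph{maximum}, and the maximum of two supersolutions is not in general a supersolution. Your last paragraph waves this off as routine, which is fair, but the routine tool is not the max/min stability you invoke: it is the invariance of geometric level-set equations under composition with continuous nondecreasing functions (as in \cite{ChenGigaGoto}). Since $v(x,t)=t-m(x)$ is a full viscosity solution of \eref{level set} on $(\R^d\setminus S)\times(0,\infty)$ and $\theta(s)=s_+$ is continuous nondecreasing, $u=\theta(v)$ is again a solution, which handles both inequalities at once including across the front $\{m=t\}$.
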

 The zero level set of $u(x,t) = t-t \wedge m(x)  $ can be interpreted as the locus of the front started from $\partial S$ and evolving under \eref{front}.  
 
The following theorem summarizes the results we will prove in this section.

\begin{theorem}\label{t.mainreg}
Suppose that $S$ satisfies \asref{Sassumption}.  There exists a unique solution $m(x) = m(x,S)$ of \eref{arrival time} satisfying:
\begin{enumerate}[label=(\roman*)]
\item\label{part.sslip} {(Small scale Lipschitz)} For $x,y \in \R^d \setminus S$
 \[ |m(x) - m(y)| \leq \tfrac{2}{c_{\min}}e^{\| \grad c\|_\infty m(x) \wedge m(y)}|x-y|.\]
    \item\label{part.settimereg} {(Regularity in time)} Call $S_t = \{m(x) \leq t\}$ for any $s,t \geq 0$
   \[ d_H(S_t,S_s) \leq C(d)|t-s|^{1/2}+2c_{\max}|t-s|.\]
    \item\label{part.setreg} {(Regularity w.r.t$.$ the data)} For $S$ and $S'$ both satisfying \asref{Sassumption}
  \[ |m(x,S) - m(y,S')| \leq \tfrac{2}{c_{\min}}d_H(S,S'). \]
 \end{enumerate}
 Additionally the following property holds in $d=2$:
 \begin{enumerate}[label=(\roman*),resume]
 \item {(Large scale Lipschitz)}\label{part.largescalelip} There exist $\tau,L >0$ depending on $c_{\min}$ such that for all $x,y \in \R^2 \setminus S$
 \[ |m(x) - m(y)| \leq \tau + L|x-y|.\]
\end{enumerate}
\end{theorem}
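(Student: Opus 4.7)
\medskip

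\noindent\textbf{Proof proposal.} The first three parts of \tref{mainreg} are essentially the standard toolkit of ball barrier arguments that come with geometric equations of this type. For \ref{part.sslip}, I would exploit that, by the choice $R_0 = C(d)/c_{\min}$, a ball of radius $R_0$ translating at speed $c_{\min}/2$ in an arbitrary direction is a strict subsolution of \eref{level set}. Chaining such translating balls from $x$ to $y$ yields a linear Lipschitz bound with constant $2/c_{\min}$; the exponential prefactor in $m(x) \wedge m(y)$ comes from a Gronwall-type iteration that absorbs the $L^\infty$-error produced at each step by the Lipschitz-in-$x$ dependence of $c$. For \ref{part.settimereg} I would use shrinking ball subsolutions (which give the $|t-s|^{1/2}$ term from pure curvature shrinking of small balls) together with radius-$R_0$ expanding ball supersolutions (giving the $c_{\max}|t-s|$ term). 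Part \ref{part.setreg} is the standard comparison principle: perturb one of $S, S'$ by a slab of thickness $d_H(S, S')$ to dominate the other, then apply the time regularity \ref{part.settimereg}.

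\smallskip

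The main content, and the main obstacle, is \ref{part.largescalelip}. My plan is to combine a \emph{cone supersolution} with a \emph{2D fattening} statement proved via Caffarelli-Monneau type geometry. The cone part is a classical computation: for $L \geq 2/c_{\min}$ the function $\phi(x) = A + L|x - x_0|$ satisfies
\[
c(x)|D\phi| - \tr\!\left[\bigl(I - \tfrac{D\phi\otimes D\phi}{|D\phi|^2}\bigr)D^2\phi\right] = L\Bigl(c(x) - \tfrac{1}{|x-x_0|}\Bigr) \geq 1
\]
for $|x - x_0| \geq R_0 = 2/c_{\min}$, so $\phi$ is a classical supersolution of \eref{arrival time} outside $B_{R_0}(x_0)$. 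Comparison then gives $m(x) \leq A + L|x-x_0|$ on $\R^2 \setminus B_{R_0}(x_0)$, provided $A \geq \sup_{\partial B_{R_0}(x_0)} m - L R_0$. Hence the whole estimate reduces to a uniform local oscillation bound: for some $\tau_0 = \tau_0(c_{\min}, c_{\max}, L_0)$ and every $x_0 \in \R^2$,
\begin{equation*}
\sup_{B_{R_0}(x_0)} m - m(x_0) \leq \tau_0.
\end{equation*}

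\smallskip

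Equivalently, once the front $\partial S_t$ reaches a point $x_0$, within additional time $\tau_0$ the set $S_t$ must contain a full ball of radius $\sim R_0$ near $x_0$. This \emph{fattening after bounded delay} is the genuinely 2-dimensional statement and is where I expect the work to live. The obstruction to immediate fattening would be a long thin finger of $S_t$ sticking out near $x_0$; in $\R^2$ the boundary $\partial S_t$ is a planar curve, so such a finger of width $w$ must carry boundary curvature $\gtrsim 1/w$ at its tip. Since $V_n = -\kappa + c$, fingers with $w \lesssim 1/c_{\max}$ collapse immediately, and a quantitative version — comparing $\partial S_t$ to a family of precisely tuned shrinking-ball supersolutions rolled along the finger — rules out all fingers of length greater than some universal scale within time $\tau_0$. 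This is exactly the step that has no analogue in $d \geq 3$, where axially-extended cylindrical fingers can be sustained.

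\smallskip

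Putting the pieces together: given $x, y \in \R^2 \setminus S$ with $m(y) \leq m(x)$, the 2D fattening inside $S_{m(y) + \tau_0}$ produces an interior ball $B_{R_0}(z) \subset S_{m(y) + \tau_0}$ with $|z - y| \leq R_0$; the cone supersolution centered at $z$, together with the radius-$R_0$ rolling-ball subsolution of Step 1, then delivers $x$ by time $m(y) + \tau_0 + L(|x - y| + R_0)$, which is \ref{part.largescalelip} with $L = 2/c_{\min}$ and $\tau = \tau_0 + L R_0$. The sub-$R_0$ case $|x-y| < R_0$ is absorbed into $\tau$ by enlarging it once. The main — and only genuinely new — difficulty is the quantitative 2D fattening statement in the third paragraph.
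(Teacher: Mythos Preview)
Your overall architecture for part \ref{part.largescalelip} is right and matches the paper: reduce the global Lipschitz estimate to a local ``fattening in bounded time'' statement (the paper's \pref{filling time}), which says that once the front touches $x_0$, within universal time $\tau$ the set $S_t$ contains all of $B_{R_0}(x_0)$. Your cone-supersolution step to pass from the local statement to the global one is a legitimate alternative to the paper's more direct chaining of \pref{filling time} along the segment from $x$ to $y$ in steps of length $R_0$; the chaining is simpler (it avoids any worry about comparison in an unbounded domain) and gives exactly $L = \tau/R_0$, but either route works once the local statement is in hand.

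The real gap is your sketch of the 2D fattening itself. Your mechanism --- ``a thin finger of $S_t$ carries tip curvature $\gtrsim 1/w$, so by $V_n = -\kappa + c$ it collapses'' --- points the wrong way. The sets $S_t$ are monotone increasing (this is part of \pref{local reg}), so a finger of $S_t$ \emph{cannot} retract; your observation at best shows fingers of $S_t$ have width $\gtrsim 1/c_{\max}$, which is far short of filling a ball of radius $R_0 \sim 1/c_{\min}$. The tools you name (``shrinking-ball supersolutions rolled along the finger'') are also inverted: the argument runs on translating-ball \emph{subsolutions}. What is actually missing is, first, a connectedness lemma (the paper's \lref{connectedR}): the component of $\mathrm{int}(S_t)$ containing $x_0$ must reach $\partial B_r(x_0)$ for every $r < R_0$, because otherwise it would be a small island which could not have arisen from $S_0$ (whose components all contain a ball of radius $R_0$). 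Second, with the resulting path $\gamma \subset S_t$ from $x_0$ to $\partial B_R(x_0)$ in hand, the Caffarelli--Monneau 2D argument uses that $\gamma$ separates the disk and pushes translating balls of radius $R/4$ moving at speed $c_{\min}/2$ (subsolutions by the choice of $R_0$) through the topology to land a full ball inside $S_{t+\tau'}$; then those same translating balls sweep out the rest of $B_{R_0}(x_0)$. This is the genuinely planar step, and it is not captured by curvature-collapse intuition.

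A smaller remark on part \ref{part.sslip}: chaining translating $R_0$-balls gives an estimate with an additive constant (that is essentially the large-scale Lipschitz again), not the pointwise bound with the factor $e^{\|Dc\|_\infty\, m(x)\wedge m(y)}$ and no additive term. The paper obtains the latter by comparing $u(\cdot,t)$ with its inf-convolution over a ball of exponentially shrinking radius $r(t) = e^{-\|Dc\|_\infty (t-t_0)}|\xi|$ (\lref{infconvolution}); the ODE $r' = -\|Dc\|_\infty r$ exactly cancels the drift in $c$ and produces the exponential prefactor.
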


\subsection{Local regularity}
Let $S \subset \R^d$ be a compact set with smooth boundary and having interior tangent balls of radius $R_0  \geq 1$ and exterior tangent balls of radius $1$ at every boundary point.  Consider the evolution
 \begin{equation}\label{e.evolution}
 u_t = \tr[(I - \tfrac{Du \otimes Du}{|Du|^2})D^2u]+c(x)|Du|  \ \hbox{ in } \ \R^d \times (0,\infty) 
 \end{equation}
 with
 \begin{equation}\label{e.evolutiondata}
 u(x,0) = -d(x,S) \ \hbox{ in } \ \R^d
 \end{equation}
 We will consider the existence, uniqueness and local regularity of the solutions of \eref{evolution}.  We will also be interested in the regularity of the corresponding arrival time.
 
 \begin{proposition}[Existence and local regularity]\label{p.local reg}
Let $S \subset \R^d$ satisfying \asref{Sassumption} with $R_0 \geq \frac{2(d-1)}{c_{\min}} \vee 2$.  There exists a unique solution $u$ of \eref{evolution} which is continuous with modulus
 \[|u(x,t) - u(y,t)| \leq \tfrac{2}{c_{\min}}e^{\|Dc\|_{\infty}t}|x-y| \ \hbox{ for all } \ x,y \in \R^d \setminus S, \ t\in [0,\infty),\]
 and monotone increasing (strictly monotone in its negativity set)
 \[ u_t(x,t) \geq \frac{1}{2}c_{\min}{\bf 1}_{\{u <0\}}   \ \hbox{ in } \ \R^d \times [0,\infty).\]
 \end{proposition}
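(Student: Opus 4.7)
The plan is to establish the three conclusions---existence/uniqueness, the spatial Lipschitz estimate, and the time monotonicity estimate---through a sequence of comparison arguments built from a single key observation: with $R_0 \geq 2(d-1)/c_{\min}$, any sphere of radius $R_0$ translating rigidly with a velocity $\xi$ of magnitude at most $c_{\min}/2$ is a subsolution of \eref{evolution}, since its outward normal speed under the flow equals $c(x) - (d-1)/R_0 \geq c_{\min}/2 \geq |\xi|$. I would first invoke Perron's method together with the comparison principle for geometric degenerate parabolic equations \cite{GGIS,Sato,CaffarelliMonneau} to produce a unique viscosity solution $u$ with initial datum $-d(\cdot, S)$, using a harmless truncation at infinity (or a level-set-wise construction) to accommodate the unbounded initial datum.

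Next I would establish the monotonicity using the moving-ball subsolutions. For each $x_0 \in \partial S$ with outward unit normal $\nu$, the interior tangent ball $B_{R_0}(x_0 - R_0 \nu)$ lies in $S = \{u_0 \geq 0\}$, and the translated ball $B_{R_0}(x_0 - R_0 \nu + (c_{\min}/2) t \nu)$ is a subsolution. Comparison then places the translated ball inside $\{u(\cdot, t) \geq 0\}$ for every $t \geq 0$. As $x_0$ ranges over $\partial S$ this shows simultaneously that the front advances at speed $\geq c_{\min}/2$ at every boundary point and that the set $S_t := \{u(\cdot, t) \geq 0\}$ itself retains the interior tangent ball condition of radius $R_0$, the tangent balls being the translated ones. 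Iterating the construction from any time $t_0 \geq 0$ yields $u(x, t_0 + h) \geq u(x, t_0) + (c_{\min}/2)h$ whenever the time segment stays in $\{u < 0\}$, which is the stated viscosity inequality $u_t \geq (c_{\min}/2)\mathbf{1}_{\{u<0\}}$.

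For the spatial Lipschitz estimate I would use the doubling variables method with the ansatz $\Phi(x, y, t) := u(x, t) - u(y, t) - L(t)|x-y|$ and $L(t) := (2/c_{\min})e^{L_0 t}$. At $t=0$ one has $\Phi \leq 0$ for $x,y \in \R^d \setminus S$ by the $1$-Lipschitz property of $u_0 = -d(\cdot, S)$ (after taking $\max(1, 2/c_{\min})$ in the constant, if needed). If $\Phi$ attained a positive interior maximum at some $(x_0, y_0, t_0)$ with $t_0 > 0$, the test function $L(t)|x-y|$ would produce the same unit gradient at $x_0$ and $y_0$, so the MCF trace operator cancels upon subtracting the viscosity inequalities at the two points; the forcing term contributes at most $|c(x_0) - c(y_0)| L(t_0) \leq L_0 |x_0 - y_0| L(t_0)$, which is exactly matched by $L'(t_0)|x_0 - y_0| = L_0 L(t_0)|x_0 - y_0|$ from the time derivative of the test function, forcing a contradiction.

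The main obstacle I foresee is the iteration step in the monotonicity argument beyond time $0$: a priori the set $S_t$ is only defined as the super-$0$-level set of a viscosity solution and could in principle develop singularities that destroy the interior tangent ball condition. The observation that rescues the argument is that the explicit union-of-translated-balls construction already exhibits a subset of $S_t$ possessing the tangent ball structure \emph{by construction}, so the ``regularity propagation'' is encoded directly in the barrier itself and does not require a separate smoothness theorem for the evolving front. The remaining technical care---handling the cut locus of $d(\cdot, S)$ and the critical points where $c(x)|Du|$ degenerates---is standard viscosity theory from \cite{GGIS,Sato}.
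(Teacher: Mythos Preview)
Your spatial Lipschitz argument via doubling of variables is a legitimate alternative to the paper's route. The paper actually acknowledges this classical approach (citing \cite{Souganidis,CrandallLions}) and then deliberately chooses a different, more geometric proof based on inf-convolutions: one compares $u$ with $v(x,t)=e^{\|Dc\|_\infty t_0}|\xi|+\inf_{z\in B_{r(t)}}u(x+z,t)$ where $r(t)=e^{-\|Dc\|_\infty(t-t_0)}|\xi|$ solves $r'=-\|Dc\|_\infty r$, so that the inf-convolution exactly compensates the drift in $c$ and $v$ is a supersolution of the same equation. Your doubling argument will need the usual strict-inequality fix (e.g.\ replace $L_0$ by $L_0+\epsilon$ or add a penalization in $t$) to get an actual contradiction at the interior maximum, but that is routine.

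There is, however, a real gap in your monotonicity argument. The moving-ball subsolutions you place inside $S$ only show that the \emph{zero} level set $\partial S_t$ advances at speed $\geq c_{\min}/2$ and that $S_t$ contains a union of translated $R_0$-balls. ``Iterating the construction from any time $t_0$'' still produces information only about the zero level set: it does \emph{not} yield $u(x,t_0+h)\geq u(x,t_0)+(c_{\min}/2)h$ at points with $u(x,t_0)<0$, because $u(\cdot,t_0)$ is no longer the signed distance to $S_{t_0}$ once $t_0>0$, so knowing $\partial S_t$ moves says nothing directly about the values of $u$ below zero. What is actually needed is the observation (which the paper uses) that \emph{every} sub-level set $\{u_0\geq\lambda\}$, $\lambda\leq 0$, inherits the interior $R_0$-ball condition from $S$ because $u_0=-d(\cdot,S)$; hence in the viscosity sense
\[
\tr\big[(I-\widehat{Du_0}\otimes\widehat{Du_0})D^2u_0\big]\geq -\tfrac{d-1}{R_0}\geq -\tfrac{c_{\min}}{2}
\]
everywhere on $\R^d\setminus S$, so that $u^-(x,t):=u_0(x)+\tfrac{1}{2}c_{\min}t$ is a global subsolution of \eref{evolution}. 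Comparison at $t=0$ followed by the semigroup property then gives $u(\cdot,t+h)\geq u(\cdot,t)+\tfrac{1}{2}c_{\min}h$ on $\{u<0\}$ for all $t\geq 0$. Your moving-ball picture is the correct geometric intuition, but it must be applied to \emph{all} level sets of $u_0$ at the initial time, not iterated forward from the zero set; the paper's linear-in-$t$ subsolution is the clean way to encode this.
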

 
 This result is similar to Proposition 5.1 in \cite{CaffarelliMonneau}, we need to deal with a more general class of initial data.  The other new element here is the quantification of the local regularity of $u$ which is guaranteed by the comparison principle.   This idea goes back to \cite{Souganidis, CrandallLions} for Hamilton-Jacobi equations and is surely known for mean curvature type equations although we do not have a reference.  We give a slightly different proof based on a more geometric approach using inf-convolutions.  Essentially the idea is to compare $u(x,t)$ with a translation $v(x,t) = u(x+\xi,t)$, but these do not solve the same equation since,
 \begin{align*}
   v_t &= \tr[(I - \tfrac{Dv \otimes Dv}{|Dv|^2})D^2v]+c(x+\xi)|Dv| \\
   &\geq \tr[(I - \tfrac{Dv \otimes Dv}{|Dv|^2})D^2v]+(c(x)-\|Dc\|_\infty|\xi|)|Dv|.
   \end{align*}
 In order to fix this we speed up the level sets of $v$ by an inf-convolution over balls $B_{r(t)}$ of decreasing radius.
 
 \begin{lemma}\label{l.infconvolution}
 Suppose that $u$ is a solution of \eref{evolution} on $\R^d \times (0,T)$ and $r: (0,T) \to (0,\infty)$ is $C^1$, then
 \[ v(x,t) = \inf_{z\in B_{r(t)}(x)}u(z,t)\]
 is a supersolution on $\R^d \times (0,T)$ of 
 \[ v_t \geq \tr[(I - \tfrac{Dv \otimes Dv}{|Dv|^2})D^2v] + (\inf_{z\in B_{r(t)}}c(x+z)-r'(t))|Dv|.\]
 \end{lemma}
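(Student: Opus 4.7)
The plan is to verify the supersolution property of $v$ in the viscosity sense by translating any test function for $v$ into a test function for $u$ at an optimally chosen point, then exploiting the supersolution property of $u$. Let $\varphi \in C^2$ touch $v$ from below at a point $(x_0,t_0)$. By compactness of $\bar B_{r(t_0)}(0)$ and continuity of $u(\cdot,t_0)$, I first pick a minimizer $\xi_0 \in \bar B_{r(t_0)}(0)$ realizing $v(x_0,t_0) = u(x_0+\xi_0,t_0)$, and set $z_0 := x_0 + \xi_0$.

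For any $C^1$ curve $\eta$ defined on a neighborhood of $t_0$ satisfying $\eta(t_0) = \xi_0$ and $|\eta(s)| \le r(s)$, I will use the translated function $\psi(w,s) := \varphi(w-\eta(s),s)$. The admissibility $|\eta(s)| \le r(s)$ in the definition of $v(w-\eta(s),s)$ gives $\psi(w,s) \le v(w-\eta(s),s) \le u(w,s)$ in a spacetime neighborhood of $(z_0,t_0)$, with equality at $(z_0,t_0)$, so $\psi$ touches $u$ from below there. Applying the supersolution property of $u$ together with the chain-rule identities $D\psi = D\varphi$, $D^2\psi = D^2\varphi$, $\psi_s = \varphi_t - D\varphi \cdot \eta'(t_0)$ (all $\varphi$-derivatives at $(x_0,t_0)$) yields
\[
 \varphi_t \ge \tr\bigl[(I - \tfrac{D\varphi \otimes D\varphi}{|D\varphi|^2})D^2\varphi\bigr] + c(x_0+\xi_0)|D\varphi| + D\varphi \cdot \eta'(t_0).
\]
Since $c(x_0+\xi_0) \ge \inf_{z \in B_{r(t_0)}} c(x_0+z)$, the lemma reduces to exhibiting an admissible $\eta$ satisfying $D\varphi \cdot \eta'(t_0) \ge -r'(t_0)|D\varphi|$.

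If $|\xi_0| < r(t_0)$ the velocity $\eta'(t_0)$ is unconstrained, and $\eta'(t_0) = -r'(t_0)D\varphi/|D\varphi|$ (or any vector if $D\varphi = 0$) gives equality. If $|\xi_0| = r(t_0)$ I parametrize $\eta(s) = r(s)\hat n(s)$ with $\hat n$ a smooth unit-vector path satisfying $\hat n(t_0) = \xi_0/r(t_0)$; then $\eta'(t_0) = (r'(t_0)/r(t_0))\xi_0 + v$ for an arbitrary $v \perp \xi_0$. When $D\varphi$ has a nonzero component perpendicular to $\xi_0$, choosing $v$ large in that direction makes $D\varphi \cdot \eta'(t_0)$ arbitrarily large and the inequality is trivial. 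Otherwise $D\varphi = \mu\xi_0$, and the choice $v = 0$ gives $D\varphi \cdot \eta'(t_0) = \mu r(t_0)r'(t_0) = -r'(t_0)|D\varphi|$ exactly when $\mu \le 0$.

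The main (mild) obstacle is thus ruling out $\mu > 0$ in the boundary case. The key geometric observation is that for $\tau \in (0, r(t_0)]$ the minimizer $z_0$ still lies in $\bar B_{r(t_0)}(x_0 + \tau\xi_0/r(t_0))$ because this distance equals $r(t_0) - \tau$. Hence $v(x_0 + \tau\xi_0/r(t_0), t_0) \le u(z_0,t_0) = v(x_0,t_0)$, and combining with $\varphi \le v$ and $\varphi(x_0,t_0) = v(x_0,t_0)$ then dividing by $\tau$ and letting $\tau \to 0^+$ gives $D\varphi(x_0,t_0) \cdot \xi_0 \le 0$, which forces $\mu \le 0$ and completes the plan.
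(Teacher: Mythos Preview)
Your argument is correct and is exactly the standard viscosity-solution verification of the inf-convolution supersolution property; the paper itself omits the proof as ``standard'' and gives nothing to compare against. The only point worth noting is that your careful handling of the boundary case $|\xi_0|=r(t_0)$---in particular the geometric observation that $D\varphi(x_0,t_0)\cdot\xi_0\le 0$, forcing $\mu\le 0$---is precisely the detail that makes the argument go through cleanly, and it is done correctly.
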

We omit the proof of this lemma since it is standard and proceed with the proof of \pref{local reg}.

 \begin{proof}[Proof of \pref{local reg}]
 1. We start with an auxiliary problem
  \begin{equation}\label{e.evolutionB}
 \left\{
 \begin{array}{lll}
 \tilde{u}_t = \tr[(I - \tfrac{D\tilde{u} \otimes D\tilde{u}}{|D\tilde{u}|^2})D^2\tilde{u}]+c(x)|D\tilde{u}|  & \hbox{ in } & \R^d \setminus S \times (0,\infty) \vspace{1.5mm}\\
 \tilde{u}(x,0) = u_0(x) :=-d(x,S) & \hbox{ in } & \R^d \setminus S \\
 \tilde{u}(x,t) = t\left[\tr[(I - \tfrac{Du_0 \otimes Du_0}{|Du_0|^2})D^2u_0]+c(x)|Du_0|\right] & \hbox{ on } & \partial S \times (0,\infty)
 \end{array}\right.
 \end{equation}
  Define the barrier sub and supersolutions,
 \[ u^\pm(x,t) = u_0(x) +C_\pm t \ \hbox{ with $C_- = \frac{1}{2}c_{\min}$ and $C_+ = (d-1)+c_{\max}$.}\]
 Let $x \in \partial \{ u_0 \geq \lambda\}$ for any $\lambda \leq 0$, let $y$ be the nearest point to $x$ in $\partial S$.  Then $S$ has an interior ball $B_{R_0}(y')\subset S$ centered at some $|y'-y| = R_0$.  Then $B_{R_0}(y'+(x-y))$ is an interior ball to $\{ u_0 \geq \lambda\}$ at $x$.  Thus (in the viscosity sense)
 \[ \tr[(I - \tfrac{Du_0 \otimes Du_0}{|Du_0|^2})D^2u_0](x) \geq -\frac{d-1}{R_0}\]
 and so the following holds (in the viscosity sense)
 \[  -\frac{d-1}{R_0}+c_{\min} \leq \tr[(I - \tfrac{Du_0 \otimes Du_0}{|Du_0|^2})D^2u_0]+c(x)|Du_0| \ \hbox{ for } \ x \in \R^d \setminus S.\]
 The left hand side is larger than $C_-=\frac{1}{2}c_{\min}$ for $R_0\geq 2(d-1)/c_{\min}$.  Also, by the unit exterior ball condition of \asref{Sassumption},
 \[ \tr[(I - \tfrac{Du_0 \otimes Du_0}{|Du_0|^2})D^2u_0]+c(x)|Du_0| \leq (d-1) +c_{\max} = C_+ \ \hbox{ on } \ \partial S. \]
 Thus $u^{\pm}$ are respectively sub and supersolutions of \eref{evolutionB} with ordering holding on $\partial S$ as well by the same argument.  Then by Perron's method there exists a solution $\tilde{u}$ of \eref{evolutionB} with,
 \[ u^- \leq \tilde{u} \leq u^+.\]
See for more details on the application of Perron's method for this equation \cite{ChenGigaGoto,CrandallIshiiLions}.
 
 \medskip
 
 2. Now we prove the regularity in time.  Let $h\geq 0$ we have
 \[ \tilde{u}(x,t+h) \geq \tilde{u}(x,t) +C_-h,\]
  so by comparison principle the same holds for all $t \geq 0$.  This gives 
 \[\frac{1}{2}c_{\min}\leq \tilde{u}_t.\]
Now since $\tilde{u} > 0$ on $\partial S$ for all $t >0$ we have actually that  $u = \tilde{u} \wedge 0$ solves \eref{evolution} on $\R^d$, and by comparison it is the unique solution.  Therefore
\[ u_t(x,t) \geq \frac{1}{2}c_{\min}{\bf 1}_{\{u(x,t) <0\}}\]
since $u$ and $\tilde{u}$ agree on $\{ u(x,t)<0\}$.
 \medskip
 
 3.  Finally we address the local regularity in $x$.  That $u$ is continuous is simply a consequence of uniqueness.  Let $\xi \in \R^d$ and $t_0>0$, we aim to estimate,
 \[ u(x,t_0) - u(x+\xi,t_0) \leq e^{\|Dc\|_\infty t_0}|\xi|.\]
To this purpose we consider the geometric inf-convolution
 \[ v(x,t) = e^{\|Dc\|_\infty t_0}|\xi| +  \inf_{z \in B_{r(t)}} u(x+z,t) \ \hbox{ with } \ r(t) = e^{-\|Dc\|_\infty (t-t_0)}|\xi|.\]
   We will check that $v$ is a supersolution of \eref{evolution}. First check the initial data, using the $1$-Lipschitz condition of the distance function $u_0$,
 \[  v(x,0) = e^{\|Dc\|_\infty t_0}|\xi| + \inf_{z \in B_{e^{\|Dc\|_\infty t_0}|\xi|}} u_0(x+z)  \geq u_0(x).\]
 By \lref{infconvolution}, with $r(t) =e^{-\|Dc\|_\infty (t-t_0)}|\xi|$, $v$ is a supersolution of
 \begin{align*} 
 v_t &\geq \tr[(I - \tfrac{Dv \otimes Dv}{|Dv|^2})D^2v] + (r'(t)+\inf_{z\in B_{r(t)}}c(x+z))|Dv| \\
 & \geq \tr[(I - \tfrac{Dv \otimes Dv}{|Dv|^2})D^2v] + (c(x) - \|D c\|_\infty r(t) -r'(t))|Dv| \\
 & = \tr[(I - \tfrac{Dv \otimes Dv}{|Dv|^2})D^2v] + c(x)|Dv|
 \end{align*}
  since $r'(t) = - \|Dc\|_\infty r(t)$.  
  
  By comparison principle the ordering between $u$ and $v$ persists and we have for all $x \in \R^d$ at time $t_0$,
 \[ u(x,t_0) \leq v(x,t_0) = e^{\|Dc\|_\infty t_0}|\xi| +\inf_{z \in B_{|\xi|}} u(x+z,t_0) \leq e^{\|Dc\|_\infty t_0}|\xi| +u(x+\xi,t_0).\]
 \end{proof}
Now we return to derive the first three parts of \tref{mainreg}.  The local Lipschitz regularity of the arrival time, part \partref{sslip} of \tref{mainreg}, is a corollary of \pref{local reg}.  

 \begin{proof}[Proof of \tref{mainreg} parts \partref{sslip}-\partref{settimereg}]  Part \partref{sslip}.  We just need to show that if $u(x,t)\geq 0$ then, for $y \in \R^d$ with $m(y) \geq m(x)$,
 \[ u(y,t+h) \geq 0 \ \hbox{ for } \ h \geq  \tfrac{2}{c_{\min}}e^{\|Dc\|_{\infty}t}|x-y|.\]
 For this we combine the Lipschitz regularity of $u$ in space with the lower bound $u_t \geq \frac{1}{2}c_{\min}{\bf 1}_{\{u<0\}}$ from \pref{local reg}.  We have, either $u(y,t+h) \geq 0$ and we are done, or
 \[ u(y,t+h) \geq u(y,t) + \tfrac{1}{2}c_{\min}h \geq u(x,t) + \tfrac{1}{2}c_{\min}h - e^{\|Dc\|_{\infty}t}|x-y|,\]
 the sum of the last two terms being nonnegative for $h \geq \frac{2}{c_{\min}}e^{\|Dc\|_{\infty}t}|x-y|$ as claimed.
 
 Part \partref{setreg}.  Noting that $m(x,S \cap S') \geq m(x,S), m(x,S') \geq m(x,S \cup S')$, we can assume without loss that $S \subset S'$.  Then applying part \partref{sslip} for $x \in S'$
 \[ m(x,S) \leq \inf_{y \in S}\frac{2}{c_{\min}}e^{\|Dc\|_{\infty}m(x,S) \wedge m(y,S)}|x-y| \leq \frac{2}{c_{\min}}d_H(S,S')\]
 using that $m(y,S) = 0$ for $y \in S$.  Then $m(x,S) - \frac{2}{c_{\min}}d_H(S,S')$ is a subsolution of \eref{arrival time} for $S'$ and so $m(x,S') \geq m(x,S)$ since $m(x,S')$ is the maximal subsolution.
 
 Part \partref{settimereg}.  Assume without loss that $t \geq s \geq 0$, we can also take $s = 0$ at the cost that $C^{1,1}$ regularity \asref{Sassumption} is lost.  Let $\rho_\ep$ be a standard mollifier $M$ to be chosen and define
 \[ w(x) = \tfrac{1}{M}[\rho_\ep \star d(\cdot,S)](x) - \tfrac{C(d)\ep}{M}.\]
 Then $w(x) \leq 0$ on $S$, for $C(d)$ above sufficiently large, and
 \[ |\grad w(x)| \leq \frac{1}{M}, \ |D^2 w(x)| \leq \frac{C(d)}{M\ep}, \ \hbox{ and } \ |w(x) - \tfrac{1}{M}d(x,S)| \leq \frac{C(d)\ep}{M}.\]
 Choose $\ep = t^{1/2}$.  Now $w$ is a subsolution of 
 \[ - \tr[(I - \tfrac{Dw \otimes Dw}{|Dw|^2})D^2w] +|\grad w|\leq \frac{C(d)}{M\ep} + \frac{c_{\max}}{M} \leq 1\]
when we choose $M = 2c_{\max}+2C(d)t^{-1/2}$. Thus by comparison in $\R^d \setminus S$ we have $m(x) \geq w(x)$ and therefore
 \[ \{m(x) \leq t\} \subset \{ w(x) \leq t\} \subset   \{ d(x,S) \leq  C(d)t^{1/2}+2c_{\max} t\}.\]
\end{proof}
 \subsection{Large Scale Lipschitz Estimates for the Arrival Time in $2$-d}
 Finally we return to the main new part of \tref{mainreg}, the large (unit) scale Lipschitz estimate of the arrival time function. This is a reinterpretation of the flatness result of Caffarelli and Monneau \cite{CaffarelliMonneau} in periodic media.  Let $S \subset \R^2$ satisfying the regularity conditions \asref{Sassumption} and let $u(x,t)$ be the corresponding solution of \eref{evolution}.

 \begin{theorem}\label{t.large scale lip}
 Suppose that $S$ satisfies \asref{Sassumption}.  There exists $\tau, L>0$ depending only on $c_{\min}$ so that for every every $x,y \in \R^2 \setminus S$,
 \[ |m(x,S) - m(y,S)| \leq \tau + L|x-y|.\]
 Precisely $\tau = C(d)c_{\min}^{-1}\min\{c_{\min},1\}^{-1}$ and $L = \tau/R_0= C(d)c_{\min}^{-1}$.
 \end{theorem}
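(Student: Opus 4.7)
The plan is to produce, inside $S_{m(x)}$ and within a bounded distance of $x$, a ball of radius $R_0/8$, and then to flow this ball outward as a subsolution of \eref{level set} until it engulfs $y$. This reduces the theorem to a purely geometric claim about the level set $S_{m(x)}$, where the $2$-d topology enters essentially.

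First I would use the fact that, by the choice of $R_0$, a ball of radius $R_0/8$ moving outward at velocity $c_{\min}/2$ is a subsolution of \eref{level set}. Thus, once we know $B_{R_0/8}(z) \subset S_{m(x)}$ with $|z - x| \leq C R_0$ for some universal $C$, comparison gives $B_{R_0/8 + (c_{\min}/2)s}(z) \subset S_{m(x)+s}$ for every $s \geq 0$. Taking $s$ just large enough that the expanding ball contains $y$, i.e.\ so that $R_0/8 + (c_{\min}/2)s \geq |y - x| + C R_0$, yields $m(y) \leq m(x) + (2/c_{\min})|y - x| + O(R_0/c_{\min})$, which matches the stated values $L = O(1/c_{\min})$ and $\tau = O(R_0/c_{\min}) = O(1/c_{\min}^2)$.

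The heart of the argument is then the geometric claim: for every $x \in \partial S_{m(x)}$ there exists an interior ball $B_{R_0/8}(z) \subset S_{m(x)}$ with $|z - x| \leq C R_0$. I would establish this by adapting the $2$-d argument of Caffarelli--Monneau \cite{CaffarelliMonneau}, reinterpreted as a local regularity statement about $\partial S_{m(x)}$ rather than as global flatness of a propagating half-plane. The inputs are: that $S$ itself has interior balls of radius $R_0$ at every boundary point; that the subsolution-ball property propagates these forward in time as expanding $R_0/8$-balls inside $S_t$; and that in $2$-d the topology of planar curves (Jordan's theorem) rules out thin fingers. Concretely, I would argue by contradiction: if no such interior $R_0/8$-ball existed near $x$, one could slide exterior $R_0/8$-balls along $\partial S_{m(x)}$ to enclose a narrow region about $x$ from both sides, contradicting the fact that the subsolution flow of a nearby interior $R_0$-ball of $S$ would already have covered that region by time $m(x)$.

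The main obstacle lies precisely in this $2$-d geometric step. The dimension is essential: in $d \geq 3$ thin fingers do persist, as exhibited by the counter-examples of \cite{CaffarelliMonneau}. Carrying out the sliding-ball / inscribed-radius argument rigorously for the (generally non-smooth) viscosity level set $\partial S_{m(x)}$ is the delicate part, and would likely proceed via regularization---for example by the inf- or sup-convolutions already used in the proof of \pref{local reg}---so that the geometric claim can be established on smooth approximants and then passed to the limit.
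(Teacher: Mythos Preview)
Your overall plan---produce a ball of radius $\sim R_0$ inside $S_t$ within a bounded distance of $x$, then flow it outward as a subsolution to reach $y$---is exactly the structure of the paper's proof. The ball-chaining step is carried out in the paragraph after \pref{filling time}. But there is a genuine gap in your ``heart of the argument''.

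Your geometric claim is that at time $m(x)$ there already sits an interior ball $B_{R_0/8}(z) \subset S_{m(x)}$ with $|z-x|\leq CR_0$. This is neither what the paper proves nor what your sketch establishes. The paper instead proves \pref{filling time}: after a \emph{waiting time} $\tau$ one has $B_{R_0}(x_0)\subset S_{m(x_0)+\tau}$. The waiting time is essential. Your contradiction sketch invokes ``the subsolution flow of a nearby interior $R_0$-ball of $S$'', but there is no reason such a ball is near $x$: the initial set $S$ may be at distance $d(x,S)$ from $x$, and its interior balls expand only at speed $c_{\min}/2$, so by time $m(x)\geq d(x,S)/c_{\max}$ they have advanced by at most $\tfrac{c_{\min}}{2c_{\max}}d(x,S)$, in general far short of $x$. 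So the contradiction does not close.

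What actually drives the $2$-d argument is \lref{connectedR}: the connected component of $\textup{int}(S_{m(x)})$ containing $x$ reaches $\partial B_R(x)$ for every $R<R_0$, i.e.\ there is a \emph{path} $\gamma\subset S_{m(x)}\cap B_R(x)$ from $x$ to $\partial B_R(x)$. A path is one-dimensional and need not contain any ball; this is why your static interior-ball claim cannot be read off directly. The Caffarelli--Monneau mechanism (their Proposition~6.1, invoked in the paper's proof of \pref{filling time}) is dynamic: one maneuvers a moving $R/4$-ball subsolution inside $B_R(x)$, using that in $2$-d the curve $\gamma$ separates $B_R(x)$, and after time $\tau'\sim R_0/c_{\min}$ this sweeping produces a genuine ball $B_{R/4}(z_0)\subset S_{m(x)+\tau'}$. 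From there your expansion step finishes the job. The missing idea in your proposal is precisely this: replace the static interior-ball claim by the connectedness lemma plus the time-$\tau$ sweeping, which is where the planar topology is actually used.
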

 What is actually proven in \cite{CaffarelliMonneau} Proposition 6.1 is (with only a very small modification) the following essential result.
 \begin{proposition}\label{p.filling time}
 Suppose that $S$ satisfies \asref{Sassumption}.  There is a waiting time $\tau = \frac{13R_0}{2c_{\min}}$ so that for any $x_0 \in \R^2$ such that $m(x_0,S) \leq t$
 \[m(x,S) \leq t + \tau \ \hbox{ for all } \ x \in B_{R_0}(x_0).\]
 \end{proposition}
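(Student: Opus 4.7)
The strategy is to use a moving ball of radius $R_0/8$ traveling at speed at most $c_{\min}/2$—which is a global subsolution of the level-set equation \eref{level set} by the choice of $R_0 = C(d)/c_{\min} \vee 2$—to sweep across $B_{R_0}(x_0)$, starting from an interior tangent ball of $S_t = \{m(\cdot,S) \leq t\}$ close to $x_0$.

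\medskip

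\noindent\emph{Step 1 (Interior tangent ball for $S_t$).} The first step is to establish that for every $s \geq 0$, at every boundary point of $S_s$ there is an interior tangent ball of radius $R_0/8$ contained in $S_s$. At $s=0$ this is a weakening of \asref{Sassumption}. For $s > 0$, I would propagate it using the subsolution balls: if $B_{R_0/8}(z) \subset S_s$ and $|v| \leq c_{\min}/2$, then the family $\{B_{R_0/8}(z+hv)\}_{h \geq 0}$ is a subsolution of \eref{level set}, so by comparison $B_{R_0/8}(z+hv) \subset S_{s+h}$. Sweeping over all directions $v$ gives that the union of such moving balls is inside $S_{s+h}$, and in two dimensions—using topological simplicity of the level curves together with the initial regularity of $S$—one deduces that every new boundary point of $S_{s+h}$ inherits an interior tangent ball of the same radius $R_0/8$.

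\medskip

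\noindent\emph{Step 2 (Sweep across $B_{R_0}(x_0)$).} Granted Step 1 and the hypothesis $x_0 \in S_t$, pick $z_0$ with $B_{R_0/8}(z_0) \subset S_t$ and $|z_0 - x_0| \leq R_0/8$ (trivial if $x_0 \in \operatorname{int}(S_t)$; otherwise from the interior tangent ball of $S_t$ at $x_0$). For an arbitrary target $y \in B_{R_0}(x_0)$, set
\[
v = \frac{c_{\min}}{2} \, \frac{y - z_0}{|y - z_0|},
\]
and consider the moving ball $B_{R_0/8}(z_0 + s v)$, which is a subsolution of \eref{level set} and hence contained in $S_{t+s}$ by comparison. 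The target $y$ lies in this ball as soon as $s \cdot c_{\min}/2 \geq |y - z_0| - R_0/8$. Since $|y-z_0| \leq |y-x_0| + |x_0 - z_0| \leq R_0 + R_0/8$, this holds once $s \leq (2/c_{\min})R_0$, which is of the same order as the claimed $\tau = 13R_0/(2c_{\min})$.

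\medskip

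\noindent\emph{Main obstacle.} The hard part is Step 1: propagating the interior tangent-ball condition through the geometric evolution. The forward direction (translating existing interior balls by vectors of length up to $c_{\min} s/2$) is immediate from the comparison principle. The subtle point is capturing tangent balls at \emph{new} boundary points created by the flow, which is where the 2-D geometry enters essentially: a connected component of $B_{R_0}(x_0) \setminus S_s$ in two dimensions is bounded by a single arc of $\partial S_s$ which inherits the subsolution-ball curvature control, closing the topological argument in the spirit of \cite{CaffarelliMonneau} Proposition 6.1.
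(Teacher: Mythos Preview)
Your Step~2 is fine once Step~1 is in hand, but Step~1 is where the proposal breaks down. You claim that $S_t$ has an interior tangent ball of radius $R_0/8$ at every boundary point for all $t\ge 0$, and you justify this by pushing existing interior balls forward and then asserting that ``in two dimensions, using topological simplicity of the level curves,'' every new boundary point is touched by one of these translates. That second clause is not an argument. Translating balls forward shows only that the union of such translates lies in $S_{t+h}$; it does not show that an arbitrary $p\in\partial S_{t+h}$ lies on the boundary of one of them. Such a $p$ satisfies $m(p)=t+h$, hence $p\notin S_t$, and the only a~priori control you have is the time regularity of \tref{mainreg}\partref{settimereg}, which places $p$ within $C\sqrt{h}$ of $S_t$, not within $\tfrac{1}{2}c_{\min}h$. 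So the translated balls need not reach $p$, and nothing in your sketch forces an interior tangent ball there. Closing this gap is not a technicality; it is essentially the $2$-d content you are trying to bypass.

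The paper does not attempt to propagate an interior-ball condition. It proves instead a strictly weaker and genuinely easy topological fact, \lref{connectedR}: any connected component of $\mathrm{int}(S_t)$ touching $x_0$ must exit $B_R(x_0)$ for every $R<R_0$. This yields a path $\gamma\subset S_t\cap B_R(x_0)$ from $x_0$ to $\partial B_R(x_0)$, but no ball in $S_t$ near $x_0$. The substantive $2$-d step---which is precisely \cite{CaffarelliMonneau} Proposition~6.1 and is only cited here---starts from this path and slides subsolution balls of radius $R/4$ along and around $\gamma$ so that, after a waiting time $5R_0/c_{\min}$, one obtains a full ball $B_{R/4}(z_0)\subset S_{t+5R_0/c_{\min}}$ with $z_0\in\partial B_{R/2}(x_0)$. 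Only after this ball has been manufactured at a \emph{later} time does the easy sweep of your Step~2 apply, accounting for the remaining $\tfrac{3R_0}{2c_{\min}}$ in $\tau$. In short, the ball you want at the start of Step~2 is not available at time $t$; producing it is the actual content of the proposition, and it is done via path-connectedness plus the Caffarelli--Monneau filling, not via preservation of the interior-ball condition.
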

 From \pref{filling time} it is simple to prove \tref{large scale lip}.  Let $x,y \in \R^d \setminus S$ and call $e = \frac{x-y}{|x-y|}$,
 \[ m(y,S) \leq m(y + R_0 e,S) + \tau \leq \cdots \leq m(x,S) + \lceil \tfrac{|x-y|}{R_0}\rceil \tau \leq m(x,S) + \tau+\tfrac{\tau}{R_0}|x-y| ,\]
 which is exactly the estimate claimed by \tref{large scale lip}.
 
 First we prove a lemma on the path-connectedness of the set $S_t$.  This is almost the same as Proposition 5.7 of \cite{CaffarelliMonneau}, but we need to consider more general initial data which are not just half-spaces. 
 \begin{lemma}\label{l.connectedR}
 Let $x \in S_t$ and let $\omega_0$ be any path-connected component of $\textup{int}(S_t)$ containing $x$ in its closure.  Let $0 <r < R_0$ then $\omega_0 \cap \partial B_r(x_0) \neq \emptyset$.
 \end{lemma}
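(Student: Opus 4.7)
By contradiction, suppose $\omega_0 \cap \partial B_r(x_0) = \emptyset$ for some $r \in (0, R_0)$. Path-connectedness of $\omega_0$ together with $x_0 \in \overline{\omega_0}$ forces $\omega_0 \subset B_r(x_0)$, for otherwise any point $y \in \omega_0$ with $|y-x_0| \geq r$ would, together with a sequence in $\omega_0$ approaching $x_0$, produce a path inside the open set $\omega_0$ crossing $\partial B_r(x_0)$. The plan is then to derive a contradiction by exhibiting an open ball of radius $R_0$ sitting inside $\omega_0$.

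The natural source of such a ball is the interior tangent ball property of $S$ from \asref{Sassumption}. So first I would show $\omega_0 \cap \textup{int}(S) \neq \emptyset$. Fix any $y \in \omega_0$ and trace a path in $\textup{int}(S_t)$ back along the monotone evolution $s \mapsto S_s$ to a point $y_0 \in \textup{int}(S)$. The fact that makes this trace admissible in the ambient open set $\textup{int}(S_t)$ is the strict lower bound $u_t \geq \tfrac{1}{2}c_{\min}\mathbf{1}_{\{u<0\}}$ from \pref{local reg}: it gives $S_s \subset \textup{int}(S_t)$ strictly for every $s<t$, so every intermediate front $\partial S_s$ lies in the open set $\textup{int}(S_t)$. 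Once $y_0$ is obtained, let $C$ be the path-connected component of $\textup{int}(S)$ containing $y_0$; since $C \subset \textup{int}(S) \subset \textup{int}(S_t)$ and $y_0 \in C \cap \omega_0$, the two components agree, so $C \subset \omega_0$.

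If $C$ is unbounded the conclusion is immediate from $C \subset \omega_0 \subset B_r(x_0)$. Otherwise $\partial C \neq \emptyset$. Any $z \in \partial C$ must lie in $\partial S$: if instead $z \in \textup{int}(S)$, a small ball neighborhood of $z$ inside $\textup{int}(S)$ would lie in the unique component of $\textup{int}(S)$ containing $z$, and $z \in \overline{C}$ then forces that component to be $C$, contradicting $z \notin C$. At $z \in \partial S$, \asref{Sassumption} provides an interior tangent ball $B_{R_0}(z') \subset S$ with $z \in \partial B_{R_0}(z')$. The exterior unit ball condition at $z$ locally renders $\partial S$ a $C^{1,1}$ curve, so $B_\delta(z) \cap \textup{int}(S)$ is a connected half-disk for small $\delta$ and meets both $B_{R_0}(z')$ and $C$. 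Hence $B_{R_0}(z')$ and $C$ share the same component of $\textup{int}(S)$, giving $B_{R_0}(z') \subset C \subset \omega_0 \subset B_r(x_0)$. Since a ball of radius $R_0$ does not fit in a ball of strictly smaller radius $r$, this is the desired contradiction.

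The main obstacle is the first step: producing the descent from $\omega_0$ into $\textup{int}(S)$. Conceptually, this rules out ``island'' components of $\textup{int}(S_t)$ that are disjoint from $\textup{int}(S)$ and might have been created by a pinch-off of the front. The strict lower bound on $u_t$ is what makes the backward trace live in the open set; morally, any such pinch-off must occur below the scale of the sub-solution ball of radius $R_0/8$ translating at velocity $c_{\min}/2$---the very reason for the choice $R_0 = C(d) c_{\min}^{-1} \vee 2$---which is forbidden by comparison.
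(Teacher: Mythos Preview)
Your reduction is sound: once $\omega_0 \cap \textup{int}(S) \neq \emptyset$ is known, the interior $R_0$-ball from \asref{Sassumption} lands inside $\omega_0$ and contradicts $\omega_0 \subset B_r(x_0)$. The gap is exactly where you flag it yourself. The sentence ``trace a path in $\textup{int}(S_t)$ back along the monotone evolution $s \mapsto S_s$ to a point $y_0 \in \textup{int}(S)$'' is not an argument: strict monotonicity indeed gives $S_s \subset \textup{int}(S_t)$ for $s<t$, but there is no characteristic curve or gradient flow to follow here, and nothing you have written actually connects $y$ to $\textup{int}(S)$ by a path inside $\textup{int}(S_t)$. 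Your closing appeal to the self-propagating ball barrier is a heuristic for why islands should not form, not a proof that this particular $\omega_0$ meets $\textup{int}(S)$.

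The paper supplies the missing idea with a first-hitting-time argument. Assuming (toward contradiction) $\omega_0 \cap S_0 = \emptyset$, set
\[
t_* = \inf\{\, s \in [0,t] : S_s \cap \omega_0 \neq \emptyset \,\}.
\]
Hausdorff continuity of $s \mapsto S_s$ (\tref{mainreg}\partref{settimereg}) together with the definition of $t_*$ produces a point $x_0 \in \partial \omega_0 \cap \partial S_{t_*}$ and forces $t_* < t$. Then the strict monotonicity coming from \tref{mainreg}\partref{sslip} gives $x_0 \in \textup{int}(S_s)$ for every $s>t_*$, in particular $x_0 \in \textup{int}(S_t)$. But $x_0 \in \partial\omega_0$ with $\omega_0$ a component of $\textup{int}(S_t)$ forces $x_0 \notin \textup{int}(S_t)$, a contradiction. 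The moral: rather than trying to trace a single point backward, watch the whole set $S_s$ run forward until it first touches $\overline{\omega}_0$, and invoke strict monotonicity at that contact point.
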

 \begin{proof}
Suppose that $\omega_0 \subset B_r(x)$ for some $0 < r < R_0$.  Then $\omega_0 \cap S_0 = \emptyset$ since every connected component of $\textup{int}(S_0)$ contains a ball of radius $R_0$ by \asref{Sassumption}.  

Call $t_* = \inf \{s \in [0,t]: S_s \cap \omega_0 \neq \emptyset\}$.  First note that 
\[ \emptyset \neq S_{t_*} \cap \bar{\omega}_0 \subset \partial \omega_0 \cap \partial S_{t_*},\]
 this is just a consequence of the definition of $t_*$ and the continuity in time of $S_s$ in Hausdorff distance by \tref{mainreg} part \partref{settimereg}.  In particular $t_* < t$.  Let $x_0 \in \partial S_{t_*} \cap \partial \omega_0$, then by \tref{mainreg} part \partref{sslip} we have $x_0 \in \textup{int} (S_s)$ for all $s > t_*$.  This fact applied to $S_{t}$ contradicts that $\omega_0$ is a connected component of $\textup{int} (S_t)$.
 
 \end{proof}
 
\begin{proof}[Proof of \pref{filling time}.]
 This is basically Proposition 6.1 in \cite{CaffarelliMonneau}, we just mention the differences.  Let $x_0$ with $m(x,S) = t_0$.  Work in $B_{R}(x_0)$ for any $R_0/2 \leq R < R_0$ so that \lref{connectedR} applies and provides a path $\gamma$ contained in $S_t \cap B_{R}(x_0)$ connecting $x_0$ to $\partial B_{R}(x_0)$.  By the choice of $R_0$ the self-propagating ball barrier
 \[ \varphi_z(x,t) = {\bf 1}_{B_{R/4}(z(t))}(x) \ \hbox{ with for any path $z: \R \to \R^2 $ with } \ |\dot{z}(t)| \leq c_{\min}/2\]
 is a subsolution of \eref{evolution}.  Then follow the proof of Proposition 6.1 in \cite{CaffarelliMonneau} through Steps 1 and 2.  The result is that $B_{R/4}(z_0) \subset S_{t_0 + \tau'}$ where $z_0  \in \partial B_{R/2}(x_0)$ and $\tau' = 5R_0/c_{\min}$.  Then use the subsolutions $\varphi_z$ with 
 \[ z(t) = z_0 + \tfrac{1}{2}c_{\min}(t-t_0-\tau')\xi \ \hbox{ for each }  \xi \in S^{d-1}\]
 on the time interval $t \in [t_0+\tau',t_0+\tau'+\frac{3R_0}{2c_{\min}}]$ to conclude that $B_{R_0}(x_0) \subset S_{t+\frac{13R_0}{2c_{\min}}}$.
  
\medskip

\end{proof}
 
 We also mention here a corollary of the large scale Lipschitz estimate, which is the large scale strict monotonicity in time of the evolution $S_t$.

  \begin{lemma}\label{l.sub-level cont}
 For every $0 \leq s \leq t$,
  \[ S_s + B_{\frac{1}{L}((t-s)-\tau)_+}\subseteq S_t  .\]
 \end{lemma}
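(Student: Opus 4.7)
The plan is to derive the inclusion as a direct consequence of the large scale Lipschitz estimate, part~\partref{largescalelip} of \tref{mainreg} (equivalently \tref{large scale lip}). The large scale Lipschitz bound for $m$ is precisely the dual statement of a growth-rate bound for the sub-level sets $S_r = \{m \leq r\}$, so the argument should amount to unwinding definitions. When $t - s < \tau$, the ball radius $\tfrac{1}{L}((t-s)-\tau)_+$ is zero, and the claim reduces to the trivial monotonicity $S_s \subseteq S_t$, which follows directly from the definition $S_r = \{m \leq r\}$ and $s \leq t$.

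For the case $t - s \geq \tau$, I would fix an arbitrary $x$ with $\textup{dist}(x, S_s) \leq \tfrac{1}{L}((t-s)-\tau)$ and select $y \in \bar{S}_s$ realizing (up to an arbitrarily small error which I would then send to zero) the bound $|x-y| \leq \tfrac{1}{L}((t-s)-\tau)$. Since $m$ is continuous and $y \in \bar{S}_s$, one has $m(y) \leq s$. Applying part~\partref{largescalelip} to the pair $x,y$ then gives
\[ m(x) \leq m(y) + \tau + L|x-y| \leq s + \tau + L \cdot \tfrac{1}{L}((t-s)-\tau) = t, \]
so $x \in S_t$ as required.

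The only point that warrants a second glance is that part~\partref{largescalelip} is stated for $x,y \in \R^2 \setminus S$, while here $y$ might lie in $\partial S$ or even in $S$. Continuity of $m$ on all of $\R^2$, which follows from the small-scale Lipschitz estimate in part~\partref{sslip} (whose prefactor $\tfrac{2}{c_{\min}}e^{\|Dc\|_\infty m(x)\wedge m(y)}$ remains bounded as $m \to 0$), lets one approximate any such $y$ by $y_n \in \R^2 \setminus S$ and pass to the limit in the inequality. I do not anticipate any real obstacle: the lemma is essentially a repackaging of \tref{large scale lip} in terms of the evolving sets.
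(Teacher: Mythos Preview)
Your proposal is correct and follows essentially the same approach as the paper: both argue directly from the large scale Lipschitz estimate \tref{large scale lip}, picking a point in $S_s$ and bounding $m$ at a nearby point. The paper's proof simply takes $x \in S_s$ and checks that $m(y,S) \leq m(x,S) + \tau + L|y-x| \leq t$ whenever $|y-x| \leq \tfrac{1}{L}((t-s)-\tau)_+$; your version swaps the roles of $x$ and $y$ but is otherwise identical, and you are slightly more careful than the paper in noting the minor issue of points lying in $S$.
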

 
  \begin{proof}
Let $x \in S_s$, by the large scale Lipschitz estimates \tref{large scale lip}
\[ m(y,S) \leq m(x,S) + \tau + L|y-x| \leq s + \tau + L|y-x| \leq t\]
as long as $|y-x| \leq \frac{1}{L}((t-s)-\tau)_+$.  In other words,
\[ \{m(\cdot ,S) \leq s\} + B_{\frac{1}{L}((t-s)-\tau)_+}\subseteq\{m(\cdot ,S) \leq t\},\]
which was the desired result.

 \end{proof}

 \section{Estimate of the Random Fluctuations}\label{sec: Estimate of the Random Fluctuations}
 In this section we consider the random part of the error estimate, for a given unit direction $e$ and corresponding half-space $H(e) = \{x \cdot e \leq 0\}$ we show that,
 \[ |m(x,H(e)) - \E [m(x,H(e))]| \lesssim (x \cdot e)^{1/2} \ \hbox{ with high probability.}\]
 In other words, the random fluctuations of the arrival time are quantitatively of lower order than the size of the arrival time itself which is of order $(x \cdot e)$.
 
\begin{proposition}\label{p.fluctuations}
Suppose that $d=2$ and $S$ satisfies \asref{Sassumption}, then for all $d(x,S) \geq 1$
\[ \log \P(|m(x,S) - \E m(x,S)| > \lambda d(x,S)^{1/2}) \leq  - \frac{c_{\min} \min\{1,c_{\min}^2\}}{Cc_{\max}} \lambda^2+C,\]
where $C$ are numerical constants.
\end{proposition}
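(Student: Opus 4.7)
The plan is a Doob martingale argument combined with an Azuma--Hoeffding inequality, the key input being the large-scale Lipschitz estimate \tref{large scale lip}. The finite range of dependence assumption allows one to reveal the medium cube by cube and treat blocks as essentially independent; the role of \tref{large scale lip} is both to bound the size of each martingale increment by a constant depending only on $c_{\min}$ and to bound the \emph{number} of nontrivial increments by $O(d(x,S))$.

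Partition $\R^2$ into unit cubes $\{Q_n\}_{n \geq 1}$, enumerated, say, by distance from $S$, and let $\mathcal{F}_n = \sigma(c|_{Q_1 \cup \cdots \cup Q_n})$. Since $m(\cdot,S)$ is a.s.\ bounded on bounded sets (via comparison with a constant-speed subsolution moving at speed $c_{\min}/2$), the Doob martingale $X_n := \E[m(x,S)\mid \mathcal{F}_n]$ converges a.s.\ to $m(x,S)$, so it suffices to control the differences $D_n := X_n - X_{n-1}$. A standard coupling gives $|D_n| \leq \E[\,|m(x,S,c) - m(x,S,c^{(n)})|\mid \mathcal{F}_n\,]$, where $c^{(n)}$ agrees with $c$ off $Q_n$ and is an independent resample on $Q_n$. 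Monotonicity of $m$ in $c$ together with \tref{large scale lip} then gives the deterministic bound $|D_n| \leq \tau + L\sqrt{2}$: modifying $c$ on a single unit cube changes the arrival time at any downstream point by no more than a ``filling time'' comparable to $\tau + L\,\mathrm{diam}(Q_n)$.

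The counting step is the heart of the matter. One observes that $D_n$ vanishes unless the medium on $Q_n$ actually influences the front that reaches $x$ by time $m(x,S)$. Combining \tref{large scale lip} with \lref{sub-level cont}, one can produce, deterministically from $c$, a ``pseudo-geodesic'' path from $S$ to $x$ of length $O(c_{\min}^{-1}d(x,S))$ such that modifying $c$ outside a bounded-width tube around this path does not alter $m(x,S)$. This yields $\sum_n a_n^2 \leq Cd(x,S)$ where $a_n$ is the deterministic bound on $|D_n|$, and Azuma--Hoeffding delivers the claimed Gaussian concentration $\exp(-C^{-1}\lambda^2)$. The $1$-dependence (rather than independence) is handled by coloring the cubes into a bounded number of classes so that within each class the cubes are mutually independent, at the cost only of a universal constant.

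The main obstacle is the rigorous localization in the counting step: without pointwise Lipschitz estimates, individual optimal paths are not unique and may be unstable under local perturbations of $c$, so the argument cannot rely on comparing geodesics for different media. Instead one must use only the large-scale Lipschitz regularity of $m$ itself---in essence, the statement that interchanging $c$ in a single cube changes $m(x,S)$ only to the extent that the front passes through that cube. This is exactly the ``quantitative uniqueness without uniform local regularity'' flagged in the introduction, and it is where the bulk of the technical effort in this section is to be expected.
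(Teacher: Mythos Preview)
Your spatial Doob--martingale approach contains a genuine gap in the counting step. The assertion that one can find a ``pseudo-geodesic'' tube of bounded width such that modifying $c$ outside it does not alter $m(x,S)$ is false for the forced mean curvature flow. Unlike the first-order eikonal equation, the curvature term couples the entire front: there is no optimal path, and the arrival time at $x$ genuinely depends on the medium throughout the two-dimensional region $\{m \leq m(x,S)\}$. For half-space initial data this region is an unbounded strip, so infinitely many cubes $Q_n$ have $D_n \neq 0$; even after a localization in the spirit of \pref{localization} one is left with $O(d(x,S)^2)$ relevant cubes at best, and Azuma then gives only the trivial $O(d(x,S))$ fluctuation bound. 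Your final paragraph correctly identifies this as the crux, but the resolution you sketch (``interchanging $c$ in a single cube changes $m(x,S)$ only to the extent that the front passes through that cube'') does not help: the front \emph{does} pass through every cube in the strip. The bound $|D_n|\le \tau+L\sqrt2$ is also not adequately justified---\tref{large scale lip} compares arrival times at different points in a fixed medium, not at the same point in different media---though this part is likely repairable.

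The paper's proof takes a different route that avoids the counting problem entirely: following Kesten and \cite{ArmstrongCardaliaguet}, it uses a \emph{time}-indexed filtration $\mathcal{G}_t$ generated by the (discretized) random sub-level set $S_t=\{m\le t\}$ together with $c|_{S_t}$. The structural input is the localization-in-sub-level-sets property (\lref{localization in sublevels}): since the front is monotone, $S_t$ depends only on $c|_{S_t}$, so $X_t=\E[m(x,S)\mid\mathcal G_t]$ is a martingale. The large-scale Lipschitz estimate enters twice---to bound each unit-time increment $|X_t-X_{t-1}|$ by a universal constant (via the $d_H$-continuity of $m(x,\cdot)$ from \tref{mainreg}\partref{setreg}), and to bound the terminal time by $\tau+L\,d(x,S)$---so that Azuma is applied over $O(d(x,S))$ steps of bounded size. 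The one-dimensional structure here is temporal, not spatial: the front may be arbitrarily wide, but it reaches $x$ in $O(d(x,S))$ units of time.
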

 
 This idea of the proof is very similar to \cite{ArmstrongCardaliaguetSouganidis,ArmstrongCardaliaguet} (specifically see \cite[Proposition 3.1]{ArmstrongCardaliaguet}). The proof only relies on large scale Lipschitz estimates not local regularity.  For completeness we include the adapted proofs in \aref{randomfluct}.  There are some small adjustments needed in the arguments since our Lipschitz estimate does not follow directly from the arrival time PDE, but rather from the connection with a front propagation initial data problem problem in the whole space (in particular no boundary).

 \section{Estimate of the Deterministic Part of the Error}\label{s.deterministic}
In this section we consider the deterministic part of the error estimate, aiming to show, for $H(e) = \{x \cdot e \leq 0\}$,
\begin{equation}
 \left|\E [m(x,H(e))] - \frac{1}{\bar{c}(e)}(x \cdot e)\right| \lesssim ( 1+(x \cdot e)^{2/3} ).
 \end{equation}
 As in \cite{ArmstrongCardaliaguet} the idea is to show that $\E[m(te,H(e))]$ is approximately linear in $t$ by using the concentration estimate along with a quantitative (localized) uniqueness property of the arrival time.  
 
 Here the lack of local regularity is a more serious issue.  On its face, the localized uniqueness property, originally proved in \cite{ArmstrongCardaliaguet}, really relies on local regularity of the solution. The only local regularity that we have is the Lipschitz estimate of \tref{mainreg} part \partref{sslip} which grows exponentially in time.  If we were to directly emulate the localization result of \cite{ArmstrongCardaliaguet} then, in order to obtain ordering between a pair of solutions at time $t$,  would require ordering of the solutions at time $0$ on a region of size $e^{Ct}$.  This exponential growth is, just barely, too large of a region to control using the fluctuations estimate \pref{fluctuations}.  We provide a slight improvement on this exponential localization scale by taking advantage of the large scale Lipschitz estimate. Basically we regularize $n$ times during the evolution, resulting in an additional error of size $\sim n$, but reducing the time interval on which we need to use local regularity so that the localization rate is only $ne^{Ct/n}$.  This result can be applied with $n \sim t^{\alpha}$ for some $\alpha \in (0,1)$ chosen judiciously.
 
 \medskip

\subsection{Localized influence of the boundary data}  
Our first goal is to establish the localization property of the arrival time solutions.  
\begin{proposition}\label{p.localization}
Let $m^1$ and $m^2$ be solutions of the arrival time problem,
\[ - \tr\left[(I - \tfrac{Dm \otimes Dm}{|Dm|^2})D^2m\right] + c(x) |Dm| = 1 \ \hbox{ in } \ \R^2 \setminus S^i\]
with $m^i = 0$ in $ S^i$.  Suppose that: $m^i$ both satisfy the unit scale Lipschitz estimate \tref{large scale lip} with constants $\tau,L \geq 1$, there is $R \geq 1$ such that the ordering holds
\[  S^2 \subset S^1 \ \hbox{ on } \   B_R(0).\]
  Then there exists $C_1(\|Dc\|_\infty,c_{\min}),C_2(c_{\min}) \geq 1$ such that, if $s \geq 1$, $n \leq s$, and $R \geq \bar{R}_n(s) = ne^{C_1(\tau+L\frac{s}{n})}$,
\[ \{m^2(x) \leq s-C_2n\} \subset \{m^1(x) \leq s\}   \ \hbox{ on } \ B_{R-\bar{R}_n(s)}(0).\]
\end{proposition}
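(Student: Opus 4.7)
The strategy, indicated in the introductory paragraphs of this section, is to iterate a short-time localization $n$ times, alternating between the small scale Lipschitz regularity of \pref{local reg} (which controls the propagation of information over one short interval) and the large scale Lipschitz estimate of \tref{large scale lip} (which, via \lref{sub-level cont}, permits a regularization at the end of each interval at the cost of $C_2$ units of time).

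The main building block is a one-step lemma: for $\Delta t = s/n$, if $S^2_a \subset S^1_b$ on $B_\rho(0)$ with $\rho$ sufficiently large, then
\[ S^2_{a + \Delta t - C_2} \subset S^1_{b + \Delta t} \ \hbox{ on } \ B_{\rho - r(\Delta t)}(0), \qquad r(\Delta t) \sim e^{C_1(\tau + L\Delta t)}. \]
I would prove this by the geometric inf-convolution trick of \lref{infconvolution}: with $v(x,t) := \inf_{z \in B_{\xi(t)}(x)} u^2(z, a - C_2 + t)$ on $t \in [0,\Delta t]$, $\xi(t)$ decreasing and satisfying $\xi'(t) = -\|Dc\|_\infty \xi(t)$, $v$ becomes a supersolution of \eref{evolution}, and the comparison principle against $u^1(\cdot, b + \cdot)$ transfers any global initial ordering to time $\Delta t$. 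Converting back to sublevel sets then costs only the radius $\xi(\Delta t) \sim r(\Delta t)$.

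Applying this comparison principle requires $v(\cdot,0) \leq u^1(\cdot, b)$ on all of $\R^2$, not merely on $B_\rho$. Supplying this global bound is where the $C_2$-time sacrifice and the large scale Lipschitz enter. Replacing $S^2_a$ by the smaller $S^2_{a - C_2}$ via \lref{sub-level cont} creates a buffer of size $\sim (C_2 - \tau)/L$ around the $m^2$-front, and the global estimate $|m^2(x) - m^2(y)| \leq \tau + L|x-y|$ allows $v(\cdot,0)$ to be dominated outside $B_\rho$ by a function sitting below $u^1(\cdot, b)$. The initial inf-convolution radius $\xi(0) \sim e^{C_1(\tau + L\Delta t)}$ emerges precisely from balancing this global cut-off against the exponential growth in $\Delta t$ of the Lipschitz constant supplied by \pref{local reg}.

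Iterating the one-step lemma $n$ times starting from $a = b = 0$, $\rho = R$, yields at step $k$ the inclusion $S^2_{k\Delta t - kC_2} \subset S^1_{k\Delta t}$ on $B_{R - kr(\Delta t)}(0)$. Setting $k = n$ reproduces the claim with $\bar{R}_n(s) = n\, r(\Delta t) = ne^{C_1(\tau + Ls/n)}$ and time loss $nC_2$; the hypothesis $R \geq \bar{R}_n(s)$ ensures the ball stays nondegenerate throughout. The main technical obstacle is the regularization step: verifying that a single $k$-independent constant $C_2$ of time suffices to restore the global comparison, and that this regularization can be iterated without accumulating error in either the radius or time loss. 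Without the large scale Lipschitz estimate, no fixed-time mechanism would be available to bridge the fronts outside $B_\rho$, and the localization radius would blow up all the way to $e^{Cs}$, which is exactly the loss that the present refinement is designed to avoid.
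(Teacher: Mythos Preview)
Your high-level iterative structure---run a one-step localization over intervals of length $s/n$ and reset $n$ times using the large scale Lipschitz estimate---matches the paper exactly. The gap is in the one-step building block.

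You propose to obtain the one-step inclusion by a \emph{global} comparison: take the inf-convolution $v$ of $u^2$ and compare with $u^1$ on all of $\R^2$. For this you need $v(\cdot,0)\leq u^1(\cdot,b)$ everywhere, and you claim the $C_2$-time sacrifice together with the bound $|m^2(x)-m^2(y)|\leq \tau+L|x-y|$ supplies this outside $B_\rho$. It does not. The large scale Lipschitz estimate constrains how each $m^i$ varies with $x$, but gives \emph{no relation whatsoever between $m^1$ and $m^2$ outside $B_\rho$}; the hypothesis $S^2\subset S^1$ is only assumed on $B_\rho$, and outside that ball $S^1$ could sit anywhere relative to $S^2$. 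No amount of time sacrifice on the $m^2$ side manufactures an inequality against $m^1$ in a region where nothing is known about $m^1$. The inf-convolution spreads information; it does not localize it.

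The paper's one-step lemma (\lref{localization}) handles exactly this difficulty by a \emph{localized} doubled-variable comparison: one perturbs $w^1$ by a penalty $\psi(x,t)=g((1+|x|^2)^{1/2}-(s-t))$ that grows in $|x|$, forcing the maximum of the doubled function into $B_R$ where the ordering hypothesis is available. The small scale Lipschitz estimate of \tref{mainreg}\partref{sslip} is what makes this comparison quantitative, and its exponential growth in $s$ is what produces the $e^{C_1 s}$ radius. The large scale Lipschitz estimate is then used for a \emph{different} purpose than you propose: not to extend the ordering outside $B_\rho$, but (via \lref{m bar close}) to regularize the intermediate level sets $\Sigma^j=\{m^j\leq ks/n\}$ so that they satisfy \asref{Sassumption}, which is a hypothesis of \lref{localization}. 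This regularization costs $3\tau$ per step and is the source of the $C_2 n$ loss.
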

Note that we only assume that $m^i$ satisfy the conclusion of \tref{large scale lip}, not that the $S^i$ satisfy \asref{Sassumption}.  Of course we will apply this result when $S^i$ arise as some sub-levels $S^i = \{m(x,S^i_0) \leq t\}$ of the arrival time starting from a regular set $S^i_0$ so that \tref{large scale lip} does apply.

This is a generalization of the localized uniqueness of planar fronts proved in \cite[Proposition 4.2]{ArmstrongCardaliaguet} under a Lipschitz bound.  In fact we will derive the Proposition by iterating \cite[Proposition 4.2]{ArmstrongCardaliaguet}, a variant of which we recall here.
\begin{lemma}\label{l.localization}
Let $m^1$ and $m^2$ be solutions of the arrival time problem,
\[ - \tr\left[(I - \tfrac{Dm \otimes Dm}{|Dm|^2})D^2m\right] + c(x) |Dm| = 1 \ \hbox{ in } \ \R^2 \setminus S^i\]
with $m^i = 0$ in $ S^i$.  Suppose that both $S^i$ satisfy \asref{Sassumption}, there is $R \geq 1$ such that the ordering holds
\[  S^2 \subset S^1 \ \hbox{ on } \   B_R(0).\]
  Then there exists $C_1(\|Dc\|_\infty,c_{\min})\geq 1$ such that, if $s \geq 1$ and $R \geq \bar{R}(s) = e^{C_1s}$,
\[ \{m(x,S^2) \leq s-1\} \subset \{m(x,S^1) \leq s\}   \ \hbox{ on } \ B_{R-\bar{R}(s)}(0).\]
\end{lemma}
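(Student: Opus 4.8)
The plan is to prove \lref{localization} by a barrier/perturbation argument, comparing $m^2$ (sped up slightly) against $m^1$ on a shrinking spatial domain and using the comparison principle for the front propagation equation \eref{evolution} rather than directly for the arrival time PDE. First I would translate the arrival time statement into a statement about the evolutions $u^i(x,t) = t - t\wedge m^i(x)$, which solve \eref{evolution} with initial data $-d(\cdot,S^i)$ by the lemma following \eref{arrival time}. The ordering $S^2 \subset S^1$ on $B_R(0)$ gives $u^1(\cdot,0) \geq u^2(\cdot,0)$ on $B_R(0)$, and we want to propagate this ordering forward in time up to time $s$, losing a unit of time and shrinking the ball by $\bar R(s) = e^{C_1 s}$.

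The key step is the finite-speed-of-propagation / localization mechanism: the solution $u^1$ inside $B_{R-\rho(t)}(0)$ at time $t$ is not affected by data outside $B_R(0)$, provided $\rho(t)$ grows fast enough to dominate the (a priori only exponential) speed at which information travels inward. Concretely, I would build a radially symmetric barrier: take $v(x,t)$ to be a supersolution of \eref{evolution} that equals $u^2(x,t)$ (or a slightly time-advanced version of it) for $x$ near the center and is pushed below $u^1$ near the moving boundary $\partial B_{R-\rho(t)}(0)$, using the exterior-ball-barrier subsolutions (balls of radius $R_0/8$ moving with speed $c_{\min}/2$) and the small-scale Lipschitz estimate of \tref{mainreg}\partref{sslip}, $|u(x,t)-u(y,t)| \leq \frac{2}{c_{\min}}e^{\|Dc\|_\infty t}|x-y|$, to control how far a given sub-level set can have advanced. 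The exponential factor $e^{\|Dc\|_\infty t}$ in that estimate is precisely what forces the localization radius $\bar R(s) = e^{C_1 s}$; choosing $C_1 = C_1(\|Dc\|_\infty, c_{\min})$ large enough, the barrier $v$ stays ordered above $u^1$ on the lateral boundary $\partial B_{R-\rho(t)}(0) \times [0,s]$ and above $u^2$ on $B_{R-\rho(t)}(0) \times \{0\}$, and comparison in the cylinder yields $v \geq u^1$ throughout, hence $\{u^2(\cdot,s) > \text{something}\} \subset \{u^1(\cdot,s) > 0\}$, which unwinds to the claimed sub-level inclusion with $s-1$.

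The main obstacle is making the comparison rigorous on the shrinking domain: \eref{evolution} is a geometric, degenerate-parabolic equation and the comparison principle is classically stated on the whole space or on cylinders with prescribed lateral data, so I must either construct a genuine supersolution $v$ in the full cylinder $B_{R-\rho(t)}(0)\times(0,s)$ with $v \geq u^1$ on the parabolic boundary, or localize using the fact (again from \asref{Sassumption} and the barrier construction) that the relevant level sets of $u^1$ cannot reach $\partial B_{R-\rho(t)}(0)$ within time $s$. The cleanest route is the latter: show directly that $\{m^1 \leq s\} \cap B_{R - \bar R(s)}(0)$ depends only on $S^1 \cap B_R(0)$, by a speed bound argument — any point of $\{m^1\leq s\}$ was reached from $S^1$, and by \partref{sslip} (or a suitable barrier) a path realizing the arrival cannot originate outside $B_R(0)$ if it ends inside $B_{R-\bar R(s)}(0)$. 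Once both $m^1$ and $m^2$ restricted to the smaller ball are seen to be determined by data inside $B_R(0)$, where $S^2 \subset S^1$, the monotonicity $m(\cdot, S^2) \geq m(\cdot, S^1)$ (a consequence of the maximal-subsolution characterization, as used in the proof of \tref{mainreg}\partref{setreg}) would almost give the result — but the extra "$-1$" loss and the need to absorb boundary effects means one really does need the quantitative barrier, so I would carry out the supersolution construction carefully, tracking the constant $C_1$ through the exponential Lipschitz bound.
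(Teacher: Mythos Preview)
Your proposal has a genuine gap. The paper's proof is a \emph{doubling-of-variables} argument in the whole space: it sets $w^i(x,t)=\min\{m^i(x),t\}$, introduces a slowly varying localizing penalty $\psi(x,t)=g((1+|x|^2)^{1/2}-(s-t))$ with $|D\psi|,|D^2\psi|\leq\ep$, and maximizes
\[
\Psi(x,y,t)=w^1(x,t)-\eta w^2(y,t)-\tfrac{|x-y|^4}{4\delta}-\psi(x,t)
\]
over $\overline{S^1}\times\overline{S^2}\times[0,s]$. The Crandall--Ishii--Lions machinery (Theorem 8.3 of the user's guide) then forces the maximum to the parabolic boundary, and the exponential Lipschitz estimate \partref{sslip} is used \emph{inside} this argument to bound $|x_0-y_0|$ and to control the boundary cases. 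The scale $\bar R(s)=e^{C_1s}$ comes out of choosing $\ep\sim e^{-Cs}$ so that all the error terms, which carry factors of $e^{\|Dc\|_\infty s}$, are absorbed.

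Your Route~2 (``a path realizing the arrival cannot originate outside $B_R$'') does not work: the forced mean curvature flow is second order and degenerate parabolic, and there is no optimal-control or path representation of $m(x,S)$. The sub-level sets $S_t$ themselves only expand at speed $O(1)$ (by \partref{settimereg}), but that says nothing about lateral dependence of $m$ on far-away data---information can propagate along the front through the curvature term, and that is exactly what the lemma must quantify, so invoking it is circular.

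Your Route~1 is closer in spirit but cannot be carried out as stated. You want a supersolution $v$ that ``equals $u^2$ near the center'' and is pushed down near $\partial B_{R-\rho(t)}$; but $u^2$ is merely a viscosity solution, not smooth, so you cannot build a classical barrier that agrees with it. Comparing two non-smooth viscosity solutions of an equation with $x$-dependent coefficient $c(x)$ requires precisely the doubling-of-variables device you omit. Adding a smooth penalty $\psi$ to $w^2$ and running comparison on the whole space is the correct realization of your barrier idea, and that is what the paper does; without it there is no mechanism to handle the $c(x_0)-c(y_0)$ error or to avoid imposing artificial lateral boundary data.
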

The proof is similar to \cite[Proposition 4.2]{ArmstrongCardaliaguet} so we postpone it to \aref{localization}.

 For $m$ and $S$ as in the \pref{localization}, each sublevel set $S_t=\{m(x) \leq t\}$ can be replaced by a regular set $\tilde{S}$ satisfying \asref{Sassumption} up to a constant size error in the arrival time.  This is a key consequence of the large scale Lipschitz estimate.
 
 \begin{lemma}\label{l.m bar close}
 Suppose that $m(x,S)$ satisfies the conclusion of \tref{large scale lip}.  Then there exists $\tilde{S}$ with $S \subset \tilde{S}$ and $d_H(\tilde{S},S) \leq R_0+2$ and
 \[ m(x,\tilde{S}) \leq m(x,S) \leq m(x,\tilde{S})+ 3\tau. \]
 \end{lemma}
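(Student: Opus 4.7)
The plan is to build $\tilde{S}$ as a morphological regularization of $S$ at length scale $R_0+1$ contained in the $(R_0+2)$-neighborhood of $S$, then recover the bounds on $m(x,\tilde{S})$ by combining set-monotonicity of the arrival time with the large scale Lipschitz estimate. A natural candidate for $\tilde{S}$ is the dilation-then-erosion $(S \oplus B_{R_0+2}) \ominus B_1$ or, more robustly, a sublevel set of a $C^{1,1}$ mollification of the distance function $d(\cdot,S)$ at scale $R_0+1$; in either form the construction enforces $S \subset \tilde{S} \subset S \oplus B_{R_0+2}$, giving $d_H(\tilde{S}, S) \leq R_0+2$ immediately, and is meant to produce a boundary satisfying \asref{Sassumption}, namely interior tangent balls of radius $R_0$ and exterior tangent balls of radius $1$ at every boundary point.

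Given such a $\tilde{S}$, the lower bound $m(x,\tilde{S}) \leq m(x,S)$ is immediate from $S \subset \tilde{S}$ and the set-monotonicity of the arrival time provided by \tref{mainreg} part \partref{setreg}. For the reverse bound, any $y \in \tilde{S}$ satisfies $d(y, S) \leq R_0 + 2$, and picking a nearest point $z \in S$ (with $m(z,S) = 0$), the large scale Lipschitz estimate \tref{large scale lip} gives
\[ m(y, S) \leq \tau + L(R_0 + 2) = \tau + \tfrac{\tau}{R_0}(R_0+2) \leq 3\tau, \]
using $R_0 \geq 2$. Hence $\tilde{S} \subset S_{3\tau} := \{m(\cdot, S) \leq 3\tau\}$. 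The semigroup property of the level-set flow (from the lemma identifying $u(x,t) = t - t \wedge m(x,S)$ as the level-set solution of \eref{level set}) gives $m(x, S_{3\tau}) = m(x,S) - 3\tau$ for $x \notin S_{3\tau}$; combining this with monotonicity $m(x,\tilde{S}) \geq m(x, S_{3\tau})$ yields $m(x,S) \leq m(x, \tilde{S}) + 3\tau$.

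The hard part is the first step: verifying that the regularized set $\tilde{S}$ really has both interior $R_0$- and exterior $1$-balls at every boundary point. Simple dilation $S \oplus B_{R_0+1}$ supplies ample interior balls but can create convex corners in the complement at points equidistant from distinct portions of $S$, where the exterior $1$-ball condition fails; conversely, closing or erosion smooths such corners into arcs of curvature $1$ that can violate the interior $R_0$-ball condition. The cleanest fix is a $C^{1,1}$ regularization of $d(\cdot, S)$ via alternating inf and sup convolutions against quadratic kernels at scales matched to $R_0$ and $1$, producing a smoothed defining function whose level sets have principal curvatures in the range $[-1, 1/R_0]$ and are thereby compatible with \asref{Sassumption}; handling non-smooth points of $\partial(S \oplus B_{R_0+2})$ carefully is the main technical obstacle.
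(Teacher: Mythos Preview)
Your approach is the paper's approach. The paper takes precisely $\tilde S=\bigcap_{z\in B_1}\big((S\oplus B_{R_0+1})+z\big)=(S\oplus B_{R_0+1})\ominus B_1$, states in one line that $\tilde S$ satisfies \asref{Sassumption} with $S\subset\tilde S$ and $d_H(\tilde S,S)\le 2R_0$, and then derives the two inequalities exactly as you do: $m(\cdot,\tilde S)\le m(\cdot,S)$ from $S\subset\tilde S$, and $m(\cdot,S)\le m(\cdot,\tilde S)+3\tau$ by using the large-scale Lipschitz estimate to get $m(\cdot,S)\le\tau+2LR_0=3\tau$ on $\tilde S$ and then invoking comparison in $\R^2\setminus\tilde S$. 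Your detour through $\tilde S\subset S_{3\tau}$ and the semigroup identity is the same comparison written differently.

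The only place you hesitate is the verification of \asref{Sassumption} for the morphological $\tilde S$, and here the paper simply asserts it without further comment. The exterior $1$-ball condition is automatic, since $\tilde S^{\,c}=(S\oplus B_{R_0+1})^c\oplus B_1$ is itself a dilation by $B_1$. The interior $R_0$-ball condition also holds for the plain dilate-then-erode: at $x\in\partial\tilde S$ choose $p\in\partial S'$ with $|x-p|=1$; the direction $p-x$ lies in the outward normal cone of $S'$ at $p$, and since $S'=S\oplus B_{R_0+1}$ is a union of closed $(R_0+1)$-balls one checks that $B_{R_0+1}\big(x+R_0(x-p)\big)\subset S'$, giving an interior ball $B_{R_0}\big(x+R_0(x-p)\big)\subset\tilde S$ tangent at $x$. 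So your proposed alternating inf--sup convolution is not needed; the elementary construction already does the job, and the ``main technical obstacle'' you flag is not actually there.
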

 \begin{proof}
Take
\begin{equation}\label{e.m bar}
{S}' = \bigcup_{z\in B_{R_0+1}} (S+z) \ \hbox{ and } \ \tilde{S} = \bigcap_{z\in B_{1}} (S'+z).
 \end{equation}
Then $S \subset \tilde{S}$ and $d_H(\tilde{S},S) \leq 2R_0$ and $\tilde{S}$ satisfies \asref{Sassumption}.   Then by the assumed Lipschitz estimate
\begin{equation}\label{e.m bar close}
m(x,\tilde{S}) \leq m(x,S) \leq  m(x,\tilde{S})+ \tau +2 L R_0. 
 \end{equation}
Note that the second inequality follows from the Lipschitz estimate on $\partial \tilde{S}$ and then by comparison in $\R^2 \setminus \tilde{S}$. Also note that we must apply \tref{large scale lip} here and not \tref{mainreg} part \partref{setreg} since $S$ does not satisfy \asref{Sassumption}.  Finally recall that $L = \tau/R_0$ to conclude with the claimed constant.
 \end{proof}

\begin{proof}[Proof of \pref{localization}]
We prove by inductively applying \lref{localization}.  Suppose that the bound holds for any $0 \leq s' \leq \tfrac{ks}{n}$
\begin{equation}\label{e.inducthyp}
 \{m(x,S^2) \leq s'-(1+3\tau)k\} \subset \{m(x,S^1) \leq s'\} \ \hbox{ in } \ B_{R - k\bar{R}(\frac{s}{n})}(0)
 \end{equation}
for some $0 \leq k \leq n-1$ where $\bar{R}$ is from \lref{localization} and $\tau$ is from \tref{large scale lip}.  Note that the ordering does hold in the case $k=0$ by the assumption of the Proposition.

Now we wish to apply \lref{localization}.  Call  
\[ \Sigma^1 = \{m(x,S^1) \leq \frac{ks}{n}\} \ \hbox{ and } \ \Sigma^2 = \{m(x,S^2) \leq \frac{ks}{n}-(1+3\tau)k\}.\]
By the inductive assumption
  \begin{equation}\label{e.containk}
    \Sigma^2 \subset \Sigma^1 \ \hbox{ in } \ B_{R - k\bar{R}(\frac{s}{n})}(0).
    \end{equation}
  Now regularize replacing $\Sigma^j$ by $\tilde{\Sigma}^j$ as in \eref{m bar} at the cost of the error, from \lref{m bar close},
\begin{equation}\label{e.sigmabarclose}
m(x,\tilde{\Sigma}^j) \leq m(x,\Sigma^j) \leq m(x,\tilde{\Sigma}^j) + 3\tau.
\end{equation}
Also \eref{containk} still holds for the $\tilde{\Sigma}^j$ since the operation $\Sigma \mapsto \tilde{\Sigma}$ preserves containment.  The $\tilde{\Sigma}^j$ now satisfy \asref{Sassumption}.  Now for each $x_0 \in  \cap B_{R - (k+1)\bar{R}(\frac{s}{n})}(0)$ apply \lref{localization} in $B_{R -k\bar{R}(\frac{s}{n})- |x_0|}(x_0)$ to find
\[  \{m(x,\tilde{\Sigma}^2) \leq s'-1\} \subset \{m(x,\tilde{\Sigma}^1) \leq s'\} \ \hbox{ in } \ B_{R - (k+1)\bar{R}(\frac{s}{n})}(0) \ \hbox{ for } \ 0 \leq s' \leq \frac{s}{n}. \]
Now recalling the bounds from \eref{sigmabarclose}, the following containments hold in $B_{R - (k+1)\bar{R}(\frac{s}{n})}(0)$
\begin{align*}
 \{m(x,S^2) \leq \frac{ks}{n} + s' -(1 + 3\tau)(k+1)\} &= \{m(x,\Sigma^2) \leq s'-1 - 3\tau\} \\
 &\subset \{m(x,\tilde{\Sigma}^2) \leq s'-1-3\tau\} \\
 & \subset \{m(x,\tilde{\Sigma}^1) \leq s' - 3\tau\} \\
 &\subset \{m(x,{\Sigma}^1) \leq s' \} \\
 & = \{m(x,S^1) \leq \frac{ks}{n}+s'\}.
\end{align*}
for any $3\tau \leq s' \leq \frac{s}{n}$.  The case $0 \leq s' \leq 3\tau$ already follows from \eref{inducthyp}.  Thus \eref{inducthyp} holds for $k+1$ and we conclude by induction.
\end{proof}

\subsection{Approximate linearity and the convergence of expectations}
In this section we are finally able to address the convergence of the expectations $\E[m(te,H(e))]$ as $t \to \infty$.  Since here we will just consider half-space initial data with a fixed $e$ we will simply refer to $m(x) = m(x,H(e))$.   

\medskip

First we use the concentration estimate of \pref{fluctuations} in combination with the large scale Lipschitz estimate and a union bound to estimate the probability that $|m(x) - \E m(x)|$ is too large {anywhere} on a very large region of $x \cdot e = t$.  To quantify this we define,
\[ M_R(t) := \sup_{x \in B_{Rt} \cap \{x \cdot e \leq t\}} |m(x) - \E[m(x)]|.\]
Note that by stationarity $\E[m(x)]$ is constant on $x\cdot e = t$.
\begin{lemma}\label{l.union bd}
There exists $C(c_{\min},c_{\max}) \geq 1$ so that for every $R,t \geq 2$,
\[\E[M_R(t)] \leq C t^{1/2}\log^{1/2}(Rt).\]
\end{lemma}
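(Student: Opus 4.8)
The plan is to bound $M_R(t)$ by combining the pointwise concentration estimate of \pref{fluctuations} with a covering argument, using the large scale Lipschitz estimate \tref{large scale lip} to control the deviation of $m$ between the chosen net points and nearby points. First I would fix a net $\{x_j\}_{j=1}^N$ of the region $B_{Rt} \cap \{x\cdot e \leq t\}$ at scale $1$ (say a maximal $1$-separated set), so that $N \leq C(Rt)^2$ and every point of the region is within distance $1$ of some $x_j$. By \tref{large scale lip}, for any $x$ with $|x - x_j| \leq 1$ we have $|m(x) - m(x_j)| \leq \tau + L$, and taking expectations $|\E[m(x)] - \E[m(x_j)]| \leq \tau + L$; hence $M_R(t) \leq \max_j |m(x_j) - \E[m(x_j)]| + 2(\tau + L)$.

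Next I would apply \pref{fluctuations} at each net point. Since $d(x_j, H(e)) = -(x_j\cdot e) \vee 0$ could be small, I should restrict attention (or shift) to ensure $d(x_j,S) \geq 1$; for the points with $d(x_j,S) < 1$ one has $m(x_j) \leq C$ deterministically by the small-scale estimate or monotonicity, contributing a bounded amount, so these cause no trouble. For the remaining points $d(x_j,S) \leq t$, so \pref{fluctuations} gives $\P(|m(x_j) - \E m(x_j)| > \lambda t^{1/2}) \leq C e^{-\lambda^2/C}$ with $C = C(c_{\min},c_{\max})$. A union bound over the $N \leq C(Rt)^2$ net points yields
\[
\P\Big(\max_j |m(x_j) - \E m(x_j)| > \lambda t^{1/2}\Big) \leq C(Rt)^2 e^{-\lambda^2/C}.
\]

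To pass from this tail bound to the bound on $\E[M_R(t)]$, I would integrate the tail. Write $\E[M_R(t)] \leq 2(\tau+L) + C + \int_0^\infty \P(\max_j|m(x_j) - \E m(x_j)| > u)\,du$. Splitting the integral at $u = \lambda_0 t^{1/2}$ with $\lambda_0 := C\log^{1/2}(Rt)$ chosen so that $C(Rt)^2 e^{-\lambda_0^2/C} \leq 1$ (possible since $R,t \geq 2$), the contribution below $\lambda_0 t^{1/2}$ is at most $\lambda_0 t^{1/2} = C t^{1/2}\log^{1/2}(Rt)$, and the tail above is $\int_{\lambda_0 t^{1/2}}^\infty C(Rt)^2 e^{-u^2/(Ct)}\,du$, which by a standard Gaussian tail estimate is $\lesssim t^{1/2} (Rt)^2 e^{-\lambda_0^2/C} \lesssim t^{1/2}$ by the choice of $\lambda_0$. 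Absorbing the additive constants $2(\tau+L)+C$ into the $Ct^{1/2}\log^{1/2}(Rt)$ term (legitimate since $t \geq 2$ makes this term bounded below) gives the claim.

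The only mildly delicate point, and the one I would be most careful about, is the handling of net points near the initial hyperplane where $d(x_j,S)$ is small: \pref{fluctuations} requires $d(x,S)\geq 1$, so I must separately observe that $0 \leq m(x_j) \leq C$ there (hence $|m(x_j) - \E m(x_j)| \leq C$ deterministically) and not apply the Gaussian bound at those points. Everything else is the routine tail-integration and covering bookkeeping; no new idea is needed beyond \pref{fluctuations} and \tref{large scale lip}.
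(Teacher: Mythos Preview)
The paper does not actually give a proof of this lemma; despite the line ``Now we return to prove \lref{union bd} and \lref{almost linear}'', only the proof of \lref{almost linear} appears, and the proof of \lref{union bd} is left implicit (as its label suggests, it is intended to be a routine union bound). Your proposal supplies exactly the standard argument the name indicates: discretize the region to a $1$-net of $\lesssim (Rt)^2$ points, control the passage from net points to arbitrary points via the large scale Lipschitz estimate \tref{large scale lip}, apply \pref{fluctuations} at each net point with $d(x_j,S)\leq t$, take a union bound, and integrate the resulting Gaussian tail with the threshold $\lambda_0\sim \log^{1/2}(Rt)$. This is correct and is presumably what the authors had in mind.

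Two small remarks. First, your formula $d(x_j,H(e)) = -(x_j\cdot e)\vee 0$ has the sign reversed; since $H(e)=\{x\cdot e\leq 0\}$ one has $d(x_j,H(e)) = (x_j\cdot e)\vee 0$. This does not affect the argument. Second, your handling of net points with $d(x_j,S)<1$ is fine: for such points $0\leq m(x_j)\leq \tau+L$ deterministically by \tref{large scale lip} (take $y\in\partial S$ nearest to $x_j$), so $|m(x_j)-\E m(x_j)|\leq 2(\tau+L)$ and these points contribute only an additive constant, absorbed as you say.
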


Next we use \lref{union bd} and \pref{localization}.
\begin{lemma}\label{l.almost linear}
There exists $C(c_{\min},c_{\max},\|\grad c\|_\infty) \geq 1$ such that for every $t,s\geq 1$,
\[|\E m(te) + \E m(se) - \E m((t+s)e)| \leq Ct^{2/3} \]
\end{lemma}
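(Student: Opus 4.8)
The plan is to prove the subadditive-type estimate $|\E m(te) + \E m(se) - \E m((t+s)e)| \le Ct^{2/3}$ by a two-sided comparison between the front started from $H(e)$ and the front restarted from (a regularization of) its own level set $\{m(\cdot,H(e))\le s\}$, using \pref{localization} to keep the comparison localized and \lref{union bd} to control the cost of restarting. Write $m(x)=m(x,H(e))$ and, for a parameter $n$ to be optimized, set $S^1 = H(e)$ and $S^2 = \{m(x)\le s\}\cap B_{Rt}$ for a suitably large radius multiple $R$; by \tref{large scale lip} both $m^i$ satisfy the hypotheses of \pref{localization}. The first inequality (superadditivity up to error) comes from noting that, on the large ball, the true level set $\{m\le s\}$ contains $S^2$, so running the flow for an additional time $t$ from $\{m\le s\}$ reaches at least as far as running it from $H(e)$ for time $s+t$; thus for $x$ with $x\cdot e = t+s$ (and $|x|\le$ the shrunk radius), $m(x,S^2) \ge m(x) - $ (localization error). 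Taking expectations and using that $\E m(x,\{m\le s\})$ differs from $\E m(x-\cdot) = \E m((t+s-s)e\,\text{shifted})$ only through the fluctuation of the level set, which is controlled by \lref{union bd}, gives one side.

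For the reverse inequality I would run the comparison the other way: the regularized level set $\tilde S$ of $\{m\le s\}$ contains $\{m\le s\}$ but is within Hausdorff distance $R_0+2$ and costs at most $3\tau$ in arrival time by \lref{m bar close}; moreover on the event that $M_R(s)$ is small, $\{m\le s\}$ is sandwiched between two half-space-type translates $\{x\cdot e \le s\bar c(e) \pm O(M_R(s))\}$ — more precisely, using \lref{sub-level cont} and \tref{large scale lip}, $\{m\le s\}$ contains and is contained in balls' worth of such slabs. Comparing the flow from $H(e)$ run for time $t+s$ with the flow from this slab run for time $t$ (via finite speed of propagation, \tref{mainreg} part \partref{settimereg}, to justify the localization radius $R$) and taking expectations yields $\E m((t+s)e) \le \E m(se) + \E m(te) + C(\text{error})$. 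The errors to collect are: the localization error from \pref{localization}, which with the iteration scheme is of size $C_2 n$ provided $Rt \ge \bar R_n(t+s)\sim n e^{C_1(\tau + L(t+s)/n)}$; the regularization error $O(\tau)$ per restart; and the fluctuation error $\E[M_R(t)]\le Ct^{1/2}\log^{1/2}(Rt)$ from \lref{union bd}.

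The optimization is the crux: the constraint $Rt\ge n e^{C_1(\tau+L(t+s)/n)}$ forces, roughly, $\log(Rt) \gtrsim (t+s)/n$, so the fluctuation error behaves like $t^{1/2}\cdot ((t+s)/n)^{1/2}$ while the localization/regularization error behaves like $n$. Balancing $n \sim t^{1/2}(t/n)^{1/2}$ gives $n\sim t^{2/3}$ (taking $s\le t$ WLOG by symmetry of the statement in $s,t$, or handling $s>t$ by the same bound with roles swapped and noting the claim is stated with $t^{2/3}$), and then $\log(Rt)\sim t^{1/3}$, i.e. $R\sim e^{Ct^{1/3}}/t$, which is an admissible (finite, though huge) radius. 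Plugging back, every error term is $O(t^{2/3})$ up to logarithmic factors which are absorbed, and a careful choice of the logarithm's power (or a slightly suboptimal $n\sim t^{2/3}/\log t$) removes the log. I expect the main obstacle to be bookkeeping the two-sided comparison cleanly: one must be careful that the localized ordering hypothesis "$S^2\subset S^1$ on $B_R(0)$" of \pref{localization} is genuinely available on the needed event (this is where \lref{union bd} enters, to say the level set doesn't wander outside $B_{Rt}$ except with tiny probability), and that the expectation bound survives the conditioning — i.e., the contribution from the low-probability bad event is $o(t^{2/3})$ because $m$ itself grows at most linearly and the bad event has probability $e^{-C t^{1/3}}$ or so. Handling both directions symmetrically and tracking which errors are deterministic versus in-expectation is the fiddly part; the geometric input is entirely from \tref{large scale lip}, \lref{sub-level cont} and \pref{localization}.
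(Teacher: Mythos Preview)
Your high-level strategy is right---combine \pref{localization} with \lref{union bd} and optimize $n\sim t^{2/3}$---and your balancing computation lands on the correct exponent. But the comparison is set up incorrectly in ways that would block the argument.

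First, in the lemma $s$ and $t$ are \emph{positions} (values of $x\cdot e$), not times; $m(te)$ is the arrival time at the point $te$. So ``running the flow for an additional time $t$'' and the sublevel set $\{m\le s\}$ with $s$ a position are not the objects you want. Second, your ordering goes the wrong way: with $S^1=H(e)$ one has $H(e)\subset\{m\le\cdot\}$, not the containment $S^2\subset S^1$ that \pref{localization} needs. (Intersecting with $B_{Rt}$ does not help and may destroy the large-scale Lipschitz estimate that \pref{localization} assumes of $m^2$.) Third, invoking half-spaces at height $s\bar c(e)$ is circular: $\bar c(e)$ is produced in \lref{deterministicconv} from the present lemma.

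The device you are missing is this. Take $S^1=\{x\cdot e\le t\}$, a \emph{deterministic} half-space, and $S^2=\{m\le \E[m(te)]-M_R(t)\}$, the random sublevel shifted by the random fluctuation. By the very definition of $M_R(t)$ the containment $S^2\subset S^1$ holds on $B_{Rt}$ \emph{surely}, so no conditioning on a good event (and no bad-event tail estimate) is needed. The half-space gives stationarity, $\E[m((t+s)e,S^1)]=\E[m(se)]$; the sublevel gives the exact semigroup identity $m(\cdot,S^2)=m(\cdot)-(\E[m(te)]-M_R(t))$. Now apply \pref{localization} over the arrival-time interval $[0,\tau+Ls]$ (an upper bound, via \tref{large scale lip}, on the time to pass from position $t$ to $t+s$), take expectations, and bound $\E[M_R(t)]$ by \lref{union bd}. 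With $n\sim s^{2/3}$ and $Rt\gtrsim t+\bar R_n(\tau+Ls)$ every error is $O(t^{2/3})$. The reverse inequality comes from swapping the roles, $S^1=\{m\le \E[m(te)]+M_R(t)\}$ and $S^2=\{x\cdot e\le t\}$, with the opposite-sign shift. Note in particular that the localization is run over the \emph{shorter} distance $s$ (after assuming $s\le t$), while the fluctuation is measured at the larger position $t$; your write-up has these reversed, which still works numerically but is less natural.
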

With the approximate linearity it is relatively standard to show the convergence of $\frac{1}{t}\E[m(te)]$.  
\begin{lemma}\label{l.deterministicconv}
For each $e \in S^{d-1}$ there exists $\bar{c}(e)$ such that
\[ \left| \frac{1}{t}\E[m(te)] - \frac{1}{\bar{c}(e)} \right| \leq Ct^{-1/3}.\]
\end{lemma}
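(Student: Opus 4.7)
The plan is to extract the limit and its rate of convergence from the approximate linearity in \lref{almost linear} by a quantitative Fekete-type argument. Set $f(t) := \E[m(te,H(e))]$, and note that, by swapping the roles of $t$ and $s$, \lref{almost linear} in fact yields
\[
|f(t) + f(s) - f(t+s)| \leq C\min(t,s)^{2/3} \quad \hbox{for all } t,s \geq 1.
\]
Moreover \tref{mainreg}\partref{largescalelip} applied in expectation, together with the trivial lower bound $m(x) \geq \dist(x,S)/c_{\max}$ (the normal velocity of the front is at most $c_{\max}$), gives $f(t)/t \in [1/c_{\max},\,L + \tau]$ for all $t \geq 1$.

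First I establish convergence along the dyadic subsequence $b_k := f(2^k)/2^k$. Taking $s = t = 2^k$ in the displayed estimate gives $|b_{k+1} - b_k| \leq C\,2^{-k/3}$, so $(b_k)$ is Cauchy with limit $\ell \in [1/c_{\max},\,L+\tau]$ and $|b_k - \ell| \leq C\,2^{-k/3}$. I then set $\bar c(e) := 1/\ell$.

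Next I extend the rate to all positive integers via binary expansion. Given $n \geq 1$, write $n = 2^{k_1} + 2^{k_2} + \cdots + 2^{k_m}$ with $k_1 > k_2 > \cdots > k_m \geq 0$, and define the tails $r_i := 2^{k_{i+1}} + \cdots + 2^{k_m}$ so that $r_0 = n$ and $r_{i-1} = r_i + 2^{k_i}$. Iterating the approximate additivity gives
\[
\Bigl| f(n) - \sum_{i=1}^{m} f(2^{k_i}) \Bigr| \leq C \sum_{i=1}^{m-1} \min(r_i,\,2^{k_i})^{2/3}.
\]
The key point is the inequality $r_i < 2^{k_i}$ (the bits of $n$ strictly below position $k_i$ sum to at most $2^{k_i}-1$), so each error term is at most $r_i^{2/3} \leq 2^{2k_i/3}$. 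Since the $k_i$ are strictly decreasing nonnegative integers, $\sum 2^{2k_i/3}$ is a geometric sum bounded by $C\,2^{2k_1/3} \leq C n^{2/3}$. Combined with the dyadic rate $|f(2^{k_i}) - \ell\,2^{k_i}| \leq C\,2^{2k_i/3}$ and a second geometric summation, one obtains $|f(n) - \ell n| \leq C n^{2/3}$, i.e.\ $|f(n)/n - 1/\bar c(e)| \leq C n^{-1/3}$.

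Finally I pass from integer to real $t \geq 1$. With $n := \lfloor t \rfloor \geq 1$, the Lipschitz estimate applied to $m(\cdot,H(e))$ and then averaged gives $|f(t) - f(n)| \leq \tau + L$; combining this with the integer case and $n \geq t/2$ for $t \geq 2$ yields the claimed $|f(t)/t - 1/\bar c(e)| \leq C t^{-1/3}$ (the small regime $t \in [1,2]$ is absorbed into the constant). The heart of the argument is already packaged in \lref{almost linear}; what remains is bookkeeping, and the one subtle point is the binary-expansion error accounting: a naive bound $r_i^{2/3} \leq n^{2/3}$ summed over the $m \leq \log_2 n + 1$ terms would introduce a $\log n$ factor and lose the $n^{-1/3}$ rate, and it is the combination $r_i < 2^{k_i}$ with the strict decrease of the $k_i$ that collapses the sum to a clean geometric series.
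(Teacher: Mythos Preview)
Your argument is correct and is the standard quantitative Fekete argument the paper invokes by reference to \cite[Lemma~4.6]{ArmstrongCardaliaguet} (no proof is given in the paper itself). One small caveat: while \lref{almost linear} as \emph{stated} does yield $C\min(t,s)^{2/3}$ by the symmetry you exploit, its proof actually only establishes $C\max(t,s)^{2/3}$ (it assumes $s\le t$ and bounds by $Ct^{2/3}$); this is harmless for you, since in the binary-expansion step you use $r_i<2^{k_i}$ to bound each error term by $2^{2k_i/3}$ anyway, and that same bound follows directly from the $\max$ version.
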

We will skip the proof of \lref{deterministicconv} since it is almost the same as \cite[lemma 4.6]{ArmstrongCardaliaguet}.

Now we return to prove \lref{union bd} and \lref{almost linear}.

\begin{proof}[Proof of \lref{almost linear}]
Assume that $s \leq t$.  Call 
\[ n = (\tau+Ls)^{2/3} \ \hbox{ and } \ R =2+ t^{-1}(\tau+Ls)^{2/3} e^{C_1 (\tau+Ls)^{1/3}}\]
 with $C_1$ from \pref{localization} and $\tau$, $L$ from \tref{large scale lip}.  Apply \pref{localization} with 
\[ \hbox{$S^1 = \{x \cdot e \leq t\}$ and $S^2 = \{ m(x)\leq \E[m(te)]-M_R(t)\}$}\]
 in the domain $B_{Rt}(0)$ with the parameters $R$ and $n$ chosen above.  By the definition of $M_R(t)$ we have
 \[ S^2 \subset S^1 \ \hbox{ on } \ B_{Rt}(0).\]
 Then
 \begin{equation}\label{e.contain12}
  \{m(x,S^2) \leq \tau+Ls-Cn\} \subset \{m(x,S^1) \leq \tau+Ls\} \ \hbox{ in } \ B_{Rt - \bar{R}_n(\tau+Ls)}(0)
  \end{equation}
 where 
 \[ \bar{R}_n(\tau+Ls) = n e^{C_1(\tau+Ls)/n} = (\tau+Ls)^{2/3} e^{C_1 (\tau+Ls)^{1/3}}.\] 
   Now 
   \[Rt - \bar{R}_n(\tau+Ls) \geq t+s + (R-2)t - \bar{R}_n(\tau+Ls) \geq t+s\]
   by the definitions of $R$ and $n$ and by \tref{large scale lip}
 \[ m((t+s)e,S^1) \leq \tau + Ls\]
 so the containment \eref{contain12} holds at the point $x = (t+s)e$.  The resulting inequality at $(t+s)e$ is
\[ m((t+s)e,S^1) \leq m((t+s)e,S^2)+Cn = m((t+s)e) - \E[m(te)]+M_R(t)+Cn.\]
Taking expectations, applying \lref{union bd}, and using stationarity
\begin{align*}
 \E[m((t+s)e)] &\geq \E[m((t+s)e,S^1)]+\E[m(te)]-\E[M_R(t)]-Cn \\
 &=\E[m(se)]+\E[m(te)]-\E[M_R(t)]-Cn \\
 & \geq \E[m(se)]+\E[m(te)]-C t^{1/2}\log^{1/2}(Rt)-Cn \\
 & \geq \E[m(se)]+\E[m(te)] - Ct^{1/2}(t^{1/3}+\log t )^{1/2}-Ct^{2/3} \\
 & \geq \E[m(se)]+\E[m(te)] - Ct^{2/3},
 \end{align*}
 we have used stationarity to derive the second line and that $s \leq t$ in the fourth line.   A similar comparison argument gives the upper bound.
\end{proof}
 
 \subsection{Regularity of the asymptotic speed with respect to the normal direction}  In this final section we address the regularity of $\bar{c}(e)$ with respect to varying normal direction $e$.  This regularity plays an important role in studying the homogenization problem \eref{level set hom} with general (non-front like) initial data.  
 
The proof follows \cite[Lemma 4.7]{ArmstrongCardaliaguet}, however we need to replace Lipschitz regularity with large scale Lipschitz regularity.  Note that, although the very weak localization estimate, \pref{localization}, still allows for quantitative sublinearity with a H\"older exponent, the effect on the continuity of $\bar{c}$ is more severe.  We get only a logarithmic modulus of continuity for $\bar{c}$.
 \begin{lemma}\label{l.logcont}
 There is $C(c_{\min},c_{\max}, \|\grad c\|_\infty)$ such that for $e_1,e_2 \in S^{1}$
 \[| \bar{c}(e_1) - \bar{c}(e_2)| \leq C|\log|e_1 - e_2||^{-1}. \]
 \end{lemma}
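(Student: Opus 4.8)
The plan is to compare the arrival times from two half-spaces $H(e_1)$ and $H(e_2)$ with nearby normals, quantify how close they are on a large ball using the localization result \pref{localization}, and then pass to the limit $t \to \infty$ to read off a bound on $|\bar c(e_1) - \bar c(e_2)|$. Geometrically, if $|e_1 - e_2| = \delta$ is small, then on the ball $B_\rho(0)$ the half-spaces $H(e_1) = \{x\cdot e_1 \le 0\}$ and $H(e_2) = \{x\cdot e_2 \le 0\}$ differ by a ``wedge'' of width roughly $\rho\delta$. So after shifting $H(e_2)$ back by $C\rho\delta$ in the direction $e_1$ (more precisely, replacing $S^2$ by $\{x \cdot e_2 \le -C\rho\delta\}$), we get the ordering $S^2 \subset S^1$ on $B_\rho(0)$. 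By \tref{mainreg} part \partref{setreg} (or directly \tref{large scale lip}), this shift costs at most $\frac{2}{c_{\min}}C\rho\delta$ in arrival time, plus the constant $\tau$.

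The key step is to apply \pref{localization} with $S^1 = H(e_1)$ and $S^2$ the shifted half-space, choosing the parameters $n$ and $\rho$ appropriately. Fix a target time $s$ and evaluate at $x = se_1$; we need $\rho - \bar R_n(s') \ge |x| = s$ where $s' = m(se_1, S^1)$, which by \tref{large scale lip} is at most $\tau + Ls$. Since $\bar R_n(s') = n e^{C_1(\tau + L s'/n)}$, optimizing suggests taking $n \sim s$ so that $\bar R_n \sim s e^{C_1(\tau + L)}$ is merely \emph{linear} in $s$ — but then the localization penalty $C_2 n \sim C_2 s$ is also linear in $s$, which would spoil the argument. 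Instead, as in \lref{almost linear}, one takes $n = s^{2/3}$ (up to constants), giving localization radius $\bar R_n(s) \sim s^{2/3} e^{C_1 (\tau + Ls)^{1/3}}$ and penalty $\sim s^{2/3}$; the catch is that $\bar R_n(s)$ now grows like $e^{C_1 s^{1/3}}$, so to fit the ball we must have taken $\rho \gtrsim e^{C_1 s^{1/3}}$, and then the wedge penalty $\frac{2}{c_{\min}}C\rho\delta \sim \delta e^{C_1 s^{1/3}}$ must also be controlled. This forces $\delta \lesssim e^{-C_1 s^{1/3}}$, equivalently $s \lesssim |\log \delta|^3$; chaining the resulting inequality
\[ m(se_2, \tilde S^2) \le m(se_1, H(e_1)) + (\text{wedge cost}) + C s^{2/3} \]
with the fluctuation estimate \pref{fluctuations} / \lref{union bd} (to pass to expectations on the large ball $B_\rho$) and then dividing by $s$ and sending $s \to \infty$ along $s \sim |\log\delta|^3$ yields $|\bar c(e_1)^{-1} - \bar c(e_2)^{-1}| \lesssim s^{-1/3} \sim |\log\delta|^{-1}$, and hence the claimed logarithmic modulus for $\bar c$ itself since $\bar c$ is bounded above and below.

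The main obstacle is the bookkeeping of the three competing length/time scales: the localization radius $\bar R_n(s) \sim e^{C_1 s^{1/3}}$, the wedge width $\rho\delta$, and the error term $Cn \sim C s^{2/3}$, together with the union bound over the large ball $B_\rho$ whose logarithmic cost $\log(\rho t) \sim s^{1/3}$ must be absorbed into the $s^{2/3}$ error — this is exactly why the exponent $1/3$ (and the logarithmic, rather than Hölder, modulus) is sharp for this method. One must also check that the regularized shifted half-space $\tilde S^2$ from \lref{m bar close} still satisfies \asref{Sassumption} and that the extra $3\tau$ errors from the regularization steps are harmless, and handle the reverse inequality symmetrically by swapping the roles of $e_1$ and $e_2$.
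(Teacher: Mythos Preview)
Your proposal is essentially correct and follows the same route as the paper: localize via \pref{localization} with $n\sim s^{2/3}$, so that the localization radius is $\bar R_n(s)\sim e^{Cs^{1/3}}$; set $\rho$ (the paper's $R$) comparable to this; use that the two half-spaces differ by $O(\rho\delta)$ on $B_\rho$; and optimize by taking $s\sim|\log\delta|^3$, which yields the $|\log\delta|^{-1}$ modulus after dividing by $s$ and using \lref{deterministicconv}.

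Two remarks. First, the paper replaces the shifted half-spaces by compact convex sets $K^i_R$ agreeing with $H(e_i)$ on $B_R$ and with $d_H(K^1_R,K^2_R)\le CR|e_1-e_2|$; this is cosmetic and equivalent to your shift-by-wedge device. Second, and more substantively, your invocation of the fluctuation estimate \pref{fluctuations} / \lref{union bd} ``to pass to expectations on the large ball $B_\rho$'' is unnecessary: the localization inequality from \pref{localization} and the Hausdorff/Lipschitz comparison are \emph{deterministic} (hold almost surely), so one simply applies $\E[\cdot]$ to both sides and uses stationarity to reduce $\E[m(x,H(e_i))]$ to a function of $x\cdot e_i$. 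The paper's proof is entirely deterministic once \lref{deterministicconv} is in hand; there is no union bound and no $\log(\rho t)$ loss to absorb.
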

  \begin{proof}
 We use that $H(e_1)$ and $H(e_2)$ are close in Hausdorff distance in $B_R$, distance $CR|e_1 - e_2|$, for any $R \geq 0$.
 
 Let $R \geq C \geq 1$ to be chosen.  There exist smooth compact convex sets $K^i_R$ with \asref{Sassumption}, diameter bounded by $CR$, $K^i_R \subset H(e_i)$, 
 \[ K^i_R \cap B_R = H(e_i) \cap B_R,\]
 $K^1_R$ is the image of $K^2_R$ under the rotation sending $e_1$ to $e_2$, and
 \[ d_H(K^1_R,K^2_R) \leq CR|e_1-e_2|. \]
 Then by \tref{mainreg} part \partref{setreg}
 \[ |m(x,K^1_R) - m(x,K^2_R)| \leq CR|e_1 - e_2|.\]
 
 Now we fix $s = C_1^{-3}|\log|e_1-e_2||^{3}$, $n = [s^{2/3}]$, and $R = n e^{C_1s/n}$ with $C_1$ from \pref{localization}.  Apply \pref{localization} to see
 \[ m(se,H(e_i))\leq m(se,K^i_R) \leq m(se,H(e_i))+Cn. \]
 Combining all of the above estimates
 \begin{align*}
  |\frac{1}{\bar{c}(e_1)} - \frac{1}{\bar{c}(e_2)}| &\leq Cs^{-1/3} + \frac{1}{s}|\E[m(se,\mathcal{H}^-_{e_1})] - \E[m(se,\mathcal{H}^-_{e_2})]| \\
  & \leq Cs^{-1/3}+C\frac{n}{s}+\frac{1}{s}|m(x,K^1_R) - m(x,K^2_R)| \\
  & \leq Cs^{-1/3}+C\frac{n}{s}+C\frac{1}{s}R|e_1-e_2| \\
  & = C[s^{-1/3}+\frac{n}{s}+\frac{n}{s}e^{C_1s/n}|e_1-e_2|)] \\
  & = C|\log|e_1 - e_2||^{-1}.
  \end{align*}
  Since $\bar{c}(e) \geq \min c >0$ we can also obtain the same continuity estimate for $\bar{c}(e)$. 
 \end{proof}

 \appendix
  \section{Random fluctuations}\label{a.randomfluct}
  
  In this section we prove the fluctuations bound \pref{fluctuations} following \cite{ArmstrongCardaliaguetSouganidis,ArmstrongCardaliaguet} which is based on an idea from first passage percolation by Kesten \cite{Kesten}.  Essentially the idea is to construct a filtration $\{\mathcal{G}_t\}_{t \geq 0}$ of the probability space $(\Omega,\mathcal{F})$ so that $\E[m(x,S)|\mathcal{G}_t]$ has bounded increments in $t$ almost surely allowing us to use Azuma's inequality.  In the discrete i.i.d. setting the $\sigma$-algebra $\mathcal{G}_t$ would just be the smallest $\sigma$-algebra making $t \mapsto S_t = \{m(x,S) \leq t\}$ measurable, because of the continuum setting and the finite range of dependence the definition will be a bit more complicated.  

  \subsection{Localization in sub-level sets}  One of the key observations behind the above idea is the ``localization in sub-level sets" property of $m$, which is just another way of saying that, for a monotonically advancing front, the evolution $S_t$ at time $t$ only depends on the values of $c$ in $S_t$.
\begin{lemma}\label{l.localization in sublevels}
Fix coefficients $c_1, c_2 \in \Omega$.  Suppose that $t \geq 0$ and 
$$ c_1 \equiv c_2 \ \hbox{ in } \ \{ x \in \R^d \setminus S:  \ m(x,S,c_1) \leq t\}.$$
Then it holds that,
$$ m(x,S,c_1) = m(x,S,c_2) \ \hbox{ in } \ \{ x \in \R^d \setminus S:  \ m(x,S,c_1) \leq t\}.$$
\end{lemma}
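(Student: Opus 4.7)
Write $m_i := m(\cdot,S,c_i)$.  The goal is to show $m_1 = m_2$ on the set $\{m_1 \leq t\}$, which I would do by proving the two one-sided inequalities separately and invoking the characterization of $m_i$ from \eref{arrival time} both as the maximal subsolution and as the unique viscosity solution.

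The easier direction, $m_1 \leq m_2$ on $\{m_1 \leq t\}$, I would approach by showing that the truncation $\phi := m_1 \wedge t$ is a viscosity subsolution of the $c_2$-arrival-time PDE in $\R^d \setminus S$ with $\phi = 0$ on $S$.  At a point $x_0$ with $m_1(x_0) < t$, continuity of $m_1$ from \tref{mainreg} part \partref{sslip} gives a neighborhood inside $\{m_1 \leq t\}$ where $c_1 \equiv c_2$, so the $c_1$-subsolution property of $m_1$ locally reads as the $c_2$-subsolution property of $\phi$.  At a point with $m_1(x_0) > t$, $\phi \equiv t$ is constant and the subsolution inequality reduces to the harmless $0 \leq 1$ (checked via the singular-operator definition with $D\phi = 0$).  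The transition set $\{m_1 = t\}$ is handled by observing that any test function touching $\phi$ from above at such a point also touches $m_1$ from above there, so the subsolution inequality inherited from $m_1$ suffices.  Since $m_2$ is the maximal subsolution to the $c_2$-problem, $\phi \leq m_2$, which gives $m_1 \leq m_2$ on $\{m_1 \leq t\}$.

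The reverse direction, $m_2 \leq m_1$ on $\{m_1 \leq t\}$, is the harder one, since the analogous truncation of $m_2$ is not automatically a subsolution of the $c_1$-problem (we do not assume $c_1 \equiv c_2$ on $\{m_2 \leq t\}$).  My plan is instead to construct an LSC supersolution $\psi$ of the $c_2$-PDE equal to $m_1$ on $\{m_1 \leq t\}$ and $+\infty$ on the complement; its supersolution property is immediate in the interior of $\{m_1 < t\}$ (where $m_1$ solves the $c_1 = c_2$ PDE), trivial in $\{m_1 > t\}$ (where $\psi = +\infty$), and at the transition $\{m_1 = t\}$ is again a standard test-function argument.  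Since $\psi = m_2 = 0$ on $S$ and $\psi \geq m_2$ trivially on $\{m_1 > t\}$, the viscosity comparison principle for the $c_2$-arrival-time problem on $\R^d \setminus S$ yields $m_2 \leq \psi$, i.e., $m_2 \leq m_1$ on $\{m_1 \leq t\}$.  Alternatively one could proceed by a continuation argument in time: define $T^* := \sup\{s \in [0,t] : \{m_1 \leq s'\} = \{m_2 \leq s'\} \text{ for all } s' \leq s\}$, use time regularity \tref{mainreg} part \partref{settimereg} to get $\{m_1 \leq T^*\} = \{m_2 \leq T^*\}$, and invoke the semigroup property of the front evolution together with local uniqueness in the region $\{m_1 \leq t\}$ (where the PDEs coincide) to extend past $T^*$, contradicting maximality unless $T^* = t$.

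The main technical obstacle in both directions is the careful handling of the singular geometric operator at the transition level $\{m_1 = t\}$ for $\phi$ and $\psi$, and, for the supersolution route in direction 2, the application of comparison on the unbounded domain $\R^d \setminus S$ against an $\infty$-valued supersolution; both are tedious but standard verifications within the viscosity framework of \cite{GGIS,Sato,CaffarelliMonneau} used elsewhere in the paper.
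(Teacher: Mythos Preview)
Your first direction is exactly the paper's argument: $m_1\wedge t$ is a subsolution of the $c_2$-problem, hence $m_1\wedge t\le m_2$ by maximality.

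For the reverse direction you overlooked a one-line observation that the paper uses to restore complete symmetry. From $m_1\wedge t\le m_2$ you get $\{m_2<t\}\subset\{m_1<t\}\subset\{m_1\le t\}$ (indeed if $m_2(x)<t$ then $m_1(x)\wedge t\le m_2(x)<t$ forces $m_1(x)<t$). Since by hypothesis $c_1\equiv c_2$ on $\{m_1\le t\}$, it follows that $c_1\equiv c_2$ also holds on $\{m_2\le t\}$. Now the roles of $c_1$ and $c_2$ are interchangeable and the \emph{same} truncation argument shows $m_2\wedge t$ is a subsolution of the $c_1$-problem, whence $m_2\wedge t\le m_1$. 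So both inequalities come from the identical subsolution/maximality argument; there is no asymmetry and no need for the $+\infty$-valued supersolution construction or the continuation-in-time argument you propose. Your alternative routes are plausible but carry exactly the technical baggage you flag (comparison against an extended-real supersolution, or a semigroup/continuation setup), all of which is avoided by the observation above.
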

\begin{proof}
The proof is by a comparison principle.  We call $m_j(\cdot) = m(\cdot,S,c_j)$ and $F_j$ the corresponding PDE operators for $j=1,2$.  We check, using that the equation is geometric and the assumption of the Lemma, that we have,
\[w(x) := m(x,S,c_1) \wedge t \ \hbox{ solves } \ F_2(D^2w,Dw,x) \leq 1 \ \hbox{ in } \R^d \setminus S.\]
Then by comparison principle for the metric problem \lref{localization} $m(x,S,c_1) \wedge t \leq m(x,S,c_2)$.  Thus if $m(x,S,c_2) \leq t$ then so is $m(x,S,c_1)$ and so we can apply the same argument with
\[w(x) := m(x,S,c_2) \wedge t \ \hbox{ solves } \ F_1(D^2w,Dw,x) \leq 1 \ \hbox{ in } \R^d \setminus S.\]
to obtain the other inequality $m(x,S,c_2) \wedge t \leq m(x,S,c_1)$.
\end{proof}

\subsection{Martingale construction}  
\medskip

Since $\{m(x,S) \leq t\}$ is a function on $\R_+$ taking values in the space of compact subsets of $\R^2$ we will need to work in the space,
$$ \mathcal{K} = \textnormal{ the set of $K$ compact in $\R^2$ with $S \subseteq K$}.$$
The space $\mathcal{K}$ comes with the natural Hausdorff metric $d_H$ defined as,
$$ d_H(K,K') = \inf\{ r>0: K \subseteq K' + B_r \ \hbox{ and } \ K' \subseteq K + B_r\}.$$
We remark that $\{m(x,S) \leq t\}$ will be a continuous map $\R_+ \to \mathcal{K}$ under this metric with continuity and monotonicity estimates for $t \geq s \geq 0$,
\begin{equation}
\{m(x,S) \leq s\} + (\tfrac{1}{L}(t-s)-\tau)_+B_1\subseteq\{m(x,S) \leq t\} \subseteq \{m(x,S) \leq s\} + (1+C(t-s))B_1 
\end{equation}
see \lref{sub-level cont}.  

\medskip

In order to make sense of events like $\{\{m(x,S) \leq t\} = K\}$, in analogy to the discrete case where $\{m(x,S) \leq t\}$ can only take finitely values, we introduce a discretization of the space $\mathcal{K}$.  Since the metric space $(\mathcal{K},d_H)$ is locally compact there is a pairwise disjoint partition $(\Gamma_i)_{i \in \mathbb{N}}$ of $\mathcal{K}$ into Borel sets of $\mathcal{K}$ satisfying that $\textup{diam}_H(\Gamma_i) \leq 1$ for every $i \in \mathbb{N}$.  We represent each $\Gamma_i$ by a set $K_i$ which is the closure of $\cup_{K \in \Gamma_i} K+B_{R_0}$, we also define
\[ \widetilde{K}_i = K_i + B_{2}.\]
Note that by the $1$-dependence we have that the $\sigma$-algebras $\mathcal{F}_{K_i}$ and $\mathcal{F}_{\R^2\setminus \widetilde{K}_i}$ are independent.  Now we can view the events,
\[ E_i(t) = \big\{ \{m(x,S) \leq t\} \in \Gamma_i\big\}\]
as being sufficiently fine approximations of the (possibly zero measure) events $\big\{\{m(x,S) \leq t\} = K\big\}$.  For each $t>0$ the $E_i(t)$ are a disjoint partition of $\Omega$,
\[\bigcup_{i \in \mathbb{N}} E_i(t) = \Omega \ \hbox{ and the $E_i(t)$ are pairwise disjoint.}\]
This is a straightforward consequence of $\Gamma_i$ being a partition of $\mathcal{K}$.  The key use of the localization in sub-level sets \lref{localization in sublevels} is to show,
\begin{lemma}\label{l.locality of E_i}
For every $0 < s \leq t$ and $i \in \mathbb{N}$,
$$E_i(t) \in \mathcal{F}_{K_i}.$$
In particular $E_i(t)$ is independent of every set of $\mathcal{F}_{\R^2 \setminus \widetilde{K}_i}$.
\end{lemma}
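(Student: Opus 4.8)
The plan is to prove that $E_i(t) \in \mathcal{F}_{K_i}$ by exploiting the localization in sub-level sets, \lref{localization in sublevels}, together with the definition of $K_i$ as (essentially) an $R_0$-enlargement of everything appearing in the cell $\Gamma_i$. Recall $E_i(t)$ is the event that $\{m(x,S) \leq t\} \in \Gamma_i$, and every $K' \in \Gamma_i$ satisfies $K' \subseteq K_i$ by construction (since $K_i \supseteq K'+B_{R_0} \supseteq K'$). So on $E_i(t)$ the whole sub-level set $\{m(x,S,c) \leq t\}$ is contained in $K_i$.

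First I would show that the indicator $\mathbf{1}_{E_i(t)}$ is a function only of $c|_{K_i}$. Fix two coefficient fields $c_1, c_2$ with $c_1 \equiv c_2$ on $K_i$. I claim $E_i(t)$ holds for $c_1$ if and only if it holds for $c_2$. Suppose $\{m(\cdot,S,c_1) \leq t\} \in \Gamma_i$. Then this sub-level set lies inside $K_i$, where $c_1 = c_2$, so by \lref{localization in sublevels} (applied with the roles of $c_1, c_2$ as in that lemma) we get $m(x,S,c_1) = m(x,S,c_2)$ on $\{m(x,S,c_1) \leq t\}$. To conclude $\{m(\cdot,S,c_2) \leq t\}$ equals $\{m(\cdot,S,c_1) \leq t\}$ as sets, I need both inclusions: the lemma directly gives that wherever $m(\cdot,S,c_1) \leq t$ we also have $m(\cdot,S,c_2) \leq t$ (with equal value); conversely, the second half of the proof of \lref{localization in sublevels} shows that if $m(x,S,c_2) \leq t$ then $m(x,S,c_1) \leq t$ as well (using that $c_1 = c_2$ on the relevant sub-level set). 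Hence the two sub-level sets coincide, so they lie in the same cell $\Gamma_i$ of the partition, and $E_i(t)$ holds for $c_2$. By symmetry the equivalence holds, so $\mathbf{1}_{E_i(t)}$ depends only on $c|_{K_i}$, i.e. $E_i(t) \in \mathcal{F}_{K_i}$. I should note the argument is uniform in $t$, and the statement "for every $0 < s \leq t$" in the lemma is really just saying this holds for the single parameter appearing; the case $s$ is identical.

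For the "in particular" clause: $\widetilde{K}_i = K_i + B_2$, so $\dist(K_i, \R^2 \setminus \widetilde{K}_i) \geq 2 \geq 1$, and the $1$-dependence assumption on $\P$ gives that $\mathcal{F}(K_i)$ and $\mathcal{F}(\R^2 \setminus \widetilde{K}_i)$ are $\P$-independent; since $E_i(t) \in \mathcal{F}(K_i)$, it is independent of every event in $\mathcal{F}(\R^2 \setminus \widetilde{K}_i)$.

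The main obstacle, and the one point deserving care, is the measurability bookkeeping: one must check that the event "the sub-level set lies in $\Gamma_i$" is genuinely $\mathcal{F}$-measurable to begin with (so that the statement makes sense), which follows from the continuity of $t \mapsto \{m(x,S,c) \leq t\}$ in Hausdorff distance together with Borel-measurability of $c \mapsto m(x,S,c)$ and the fact that $\Gamma_i$ is Borel in $(\mathcal{K}, d_H)$. The slightly delicate part of the localization step is making sure the diameter-$1$ cell $\Gamma_i$ is enlarged by enough — the $B_{R_0}$ margin in the definition of $K_i$ is exactly what guarantees that \emph{every} member of $\Gamma_i$, not just the representative, sits inside $K_i$, so that \lref{localization in sublevels} applies to the actual realized sub-level set; I would make this explicit. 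Everything else is a direct invocation of \lref{localization in sublevels} and the $1$-dependence hypothesis.
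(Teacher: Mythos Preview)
Your proof is correct and follows essentially the same approach as the paper, using \lref{localization in sublevels} to show the event depends only on $c|_{K_i}$. The only packaging difference is that the paper constructs an explicit localized coefficient $c^{K_i}$ (equal to $c$ on $\cup_{K\in\Gamma_i}K$ and extended to a Lipschitz function valued in $[c_{\min},c_{\max}]$ that is $\mathcal{F}_{K_i}$-measurable) and shows $E_i(t)$ coincides with the manifestly $\mathcal{F}_{K_i}$-measurable event $F_i(t)=\{\{m^{K_i}(\cdot,S)\le t\}\in\Gamma_i\}$, thereby sidestepping the ``constant on fibers $\Rightarrow$ measurable'' bookkeeping you flagged; your direct fiberwise argument reaches the same conclusion.
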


\begin{proof}
This is a consequence of \lref{localization in sublevels} and the $1$-dependence of the random field $c$.  We define $m^{K_i}(x,S)$, a metric problem solution with localized dependence on the coefficients, as the solution of,
\begin{equation}\label{e.localized m def}
\left\{
\begin{array}{ll}
 -\tr\left[(I - \tfrac{Dm^{K_i} \otimes Dm^{K_i}}{|Dm^{K_i}|^2})D^2m^{K_i}\right] + c^{K_i}(x) |Dm^{K_i}| =  1 &  \hbox{ in } \ \R^d \setminus S \\
m^{K_i} = 0 & \hbox{ on } \ S.
\end{array}\right.
 \end{equation}
With $c^{K_i}$ a $C\|\grad c\|_\infty$ Lipschitz function on $\R^d$ with $c^{K_i}(x) = c(x)$ for $x \in \cup_{K \in \Gamma_i} K$ and $c^{K_i}(x) = c_{\min}$ in the complement of $K_i$. Precisely we define $c^{K_i}$ to be the minimal $C\|\grad c\|_{\infty}$-Lipschitz extension of $c(x)|_{\cup_{K \in \Gamma_i}K}$ to $\R^d$ which is also $\geq c_{\min}$.  For $C$ sufficiently large dimensional constant this exists and equals to $c_{\min}$ on the complement of $K_i$ as claimed.  

Then immediately $m^{K_i} \in \mathcal{F}_{K_i}$ and thus the event $F_i(t) := \{ \{m^{K_i}(x,S) \leq t\} \in \Gamma_i\}$ is $\mathcal{F}_{K_i}$ measurable as well.  On the event $E_i(t)$ we have $\{m(x,S) \leq t\} \subseteq K_i$ and so by \lref{localization in sublevels} it holds that,
\[ m(x,S) = m^{K_i}(x,S) \ \hbox{ for } \ x \in \{m(x,S) \leq t\} \ \hbox{ and thus $F_i(t)$ occurs as well.}\]
A similar argument shows that $F_i(t) \subseteq E_i(t)$ and so $F_i(t) = E_i(t)$ and the result is proven.
\end{proof}
This allows us to define the discretization of $t \mapsto S_t = \{ m(x,S) \leq t\}$,
\begin{equation}
\bar{S}_t := \sum_{i\in \mathbb{N}} K_i {\bf 1}_{E_i(t)}.
\end{equation}
From the setup that $\diam_h(\Gamma_i) \leq 1$ and the definition of $K_i$ we have the discretization error estimate,
\begin{equation}
 S_t \subseteq \bar{S}_t  \subseteq S_t+ B_{1+R_0}.
 \end{equation}
 For the discretized process we have the continuity estimate which follows from \tref{mainreg} part \partref{settimereg},
\begin{equation}\label{e.disc cont}
d_H(\bar{S}_t,\bar{S}_s) \leq  M|t-s|+2R_0+3.
\end{equation}
 Now we define the minimal filtration which makes $\bar{S}_t$ adapted, $\mathcal{G}_0$ is the trivial $\sigma$-algebra and,
 $$ \mathcal{G}_t:= \sigma\bigg(\textnormal{$E_i(s) \cap F$:  $0\leq s \leq t$, $i \in \mathbb{N}$ and $F \in \mathcal{F}(K_i)$}\bigg).$$
Now, as usual for martingale based concentration bounds, for a fixed $x \in \R^2$ we decompose $m(x,S) - \E[m(x,S)]$ by a sum of martingale differences.  Consider the $\mathcal{G}_t$-adapted martingale,
\begin{equation}
X_t := \E[m(x,S)|\mathcal{G}_t]-\E[m(x,S)].
\end{equation}
Since $\mathcal{G}_0$ is trivial we have $X_0 = 0$, while for $t$ sufficiently large, $t \geq \tau+ Ld(x,S)$ we are guaranteed that $m(x,S) \leq t$ and so we expect that $m(x,S)$ is almost (because of the discretization) $\mathcal{G}_t$ measurable. We make this rigorous with the following,
\begin{lemma}\label{l.measurable at t}
For every $0 < s \leq t$ and $x \in \R^2$,
$$ |m(x,S) - \E[m(x,S)|\mathcal{G}_t]|{\bf 1}_{x \in \bar{S}_s} \leq 8\tau$$
and in particular when $t \geq \tau+L d(x,S)$,
$$|m(x,S) - \E[m(x,S)|\mathcal{G}_t]| \leq 8\tau.$$
\end{lemma}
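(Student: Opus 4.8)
The plan is to prove the first, conditional bound, since the unconditional statement follows immediately: when $t \geq \tau + L\,d(x,S)$, \tref{large scale lip} gives $m(x,S) \leq \tau + L\,d(x,S) \leq t$, hence $x \in S_t \subseteq \bar S_t$, so we may take $s = t$ in the first estimate (or any $s$ with $x \in \bar S_s$). So fix $0 < s \leq t$ and assume $x \in \bar S_s$, i.e. $\omega$ lies in some event $E_i(s)$ with $x \in K_i$. By \lref{locality of E_i}, $E_i(s) \in \mathcal F_{K_i}$, and $E_i(s)$ is independent of $\mathcal F_{\R^2 \setminus \widetilde K_i}$. The key point is that on $E_i(s)$ the arrival time $m(x,S)$ can be replaced, up to a bounded error, by a quantity measurable with respect to $\mathcal G_t$ — in fact with respect to $\mathcal F_{K_i}$ via the localized solution $m^{K_i}(x,S)$ from the proof of \lref{locality of E_i}.

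First I would control the difference between $m(x,S)$ and $m^{K_i}(x,S)$ on the event $E_i(s)$. On $E_i(s)$ we have $S_s = \{m(\cdot,S) \leq s\} \subseteq K_i$, and since $c^{K_i} \equiv c$ on $\cup_{K \in \Gamma_i} K \supseteq S_s$, \lref{localization in sublevels} gives $m(y,S) = m^{K_i}(y,S)$ for all $y$ with $m(y,S) \leq s$; in particular, since $d_H(K_i, S_s) \leq 1 + R_0$ and $c^{K_i} \geq c_{\min}$ with $c^{K_i} \equiv c_{\min}$ outside $K_i$, a comparison argument using the large scale Lipschitz estimate \tref{large scale lip} (applied to $m^{K_i}$, whose coefficient field also satisfies the hypotheses of that theorem) bounds $|m(x,S) - m^{K_i}(x,S)|$ by a multiple of $\tau$. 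Concretely: from $S_s \subseteq K_i$ and the fact that the barrier balls of radius $R_0/8$ moving at speed $c_{\min}/2$ are subsolutions for both $m(\cdot,S)$ and $m^{K_i}(\cdot,S)$ (since both coefficients are $\geq c_{\min}$), \pref{filling time} and the chaining argument of \tref{large scale lip} give $m^{K_i}(x,S) \leq m(x,S) + C\tau$ on $E_i(s)$; the reverse inequality is immediate from $c^{K_i} \leq c$ nowhere but rather needs that $c^{K_i} = c_{\min} \leq c$ outside $K_i$, so $m^{K_i} \geq m$ in general and on $E_i(s)$ these agree on $S_s$. Bookkeeping the constants yields $|m(x,S) - m^{K_i}(x,S)| \leq C\tau$ on $E_i(s)$, which after tracking the sharp constants is $\leq 4\tau$ or so.

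Next I would use this to estimate the conditional expectation. Decompose $\Omega = \bigsqcup_i E_i(s)$; on $E_i(s)$, the event $E_i(s)$ is $\mathcal G_s \subseteq \mathcal G_t$ measurable and $F_i(t) = E_i(t) \in \mathcal F_{K_i}$ is $\mathcal G_t$-measurable. Then $\E[m(x,S)\,|\,\mathcal G_t]$ restricted to $E_i(s)$ equals $\E[m^{K_i}(x,S) + O(\tau)\,|\,\mathcal G_t]$; since $m^{K_i}(x,S)$ is $\mathcal F_{K_i}$-measurable and on $E_i(s)$ it differs from $m(x,S)$ by at most $C\tau$, one gets $|\E[m(x,S)\,|\,\mathcal G_t] - m^{K_i}(x,S)| \leq C\tau$ on $E_i(s)$, and therefore $|m(x,S) - \E[m(x,S)\,|\,\mathcal G_t]| \leq 2C\tau \leq 8\tau$ on $E_i(s)$. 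Summing over $i$ (the $E_i(s)$ partition $\Omega$, and $\{x \in \bar S_s\} = \bigsqcup_{i: x \in K_i} E_i(s)$) gives the first claimed bound. The unconditional bound then follows as noted above by taking $s = t \geq \tau + L\,d(x,S)$, since then $x \in S_t \subseteq \bar S_t$ automatically.

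The main obstacle is the comparison step bounding $|m(x,S) - m^{K_i}(x,S)|$ on $E_i(s)$: one must verify carefully that $m^{K_i}$ genuinely satisfies the large-scale Lipschitz estimate (its coefficient $c^{K_i}$ is still $C\|\grad c\|_\infty$-Lipschitz, bounded between $c_{\min}$ and $\max\{c_{\max}, c_{\min}\}$, so \tref{large scale lip} applies with the same $\tau, L$ up to universal constants), and that the "filling" argument can be run inside $K_i$ starting from $S_s$ to push $m^{K_i}(x,S)$ up to within $O(\tau)$ of $s$, hence of $m(x,S)$. The discretization buffers ($K_i$ contains $S_s + B_{R_0}$, and $\widetilde K_i = K_i + B_2$) are exactly sized so that the $1$-dependence still separates $\mathcal F_{K_i}$ from $\mathcal F_{\R^2 \setminus \widetilde K_i}$, which is what makes the martingale differences bounded in the subsequent Azuma argument; that structural point was already established in \lref{locality of E_i}, so here it only needs to be invoked.
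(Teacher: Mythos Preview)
Your proposal is correct and follows essentially the same route as the paper: replace $m(x,S)$ by the $\mathcal{F}(K_i)$-measurable localization $m^{K_i}(x,S)$ on each $E_i(s)$ with error $O(\tau)$, note that $m^{K_i}(x,S){\bf 1}_{E_i(s)}$ is $\mathcal{G}_s$-measurable (this is immediate from the very definition of $\mathcal{G}_t$ as generated by sets $E_j(s')\cap F$ with $F\in\mathcal{F}(K_j)$, and you should state it explicitly rather than only citing $\mathcal{F}_{K_i}$-measurability), and pass the bound through the conditional expectation. The paper's comparison step is slightly leaner than your ``Concretely'' paragraph: since $m=m^{K_i}$ exactly on $S_s$ by \lref{localization in sublevels} and $K_i\subset S_s+B_{1+R_0}$ on $E_i(s)$, applying the large-scale Lipschitz estimate to \emph{both} $m$ and $m^{K_i}$ gives $|m(x,S)-m^{K_i}(x,S)|\leq 2(\tau+LR_0)$ directly, with no need for the global ordering $m^{K_i}\geq m$ or the filling-time rechain.
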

The proof of \lref{measurable at t} will follow at the end of the section.  Now to apply Azuma's inequality we aim to show the bounded differences estimate for $0 < s \leq t$,
 \begin{equation}\label{e.bounded increments}
 |X_t - X_s| \leq C_1 |t-s|+C_2.
 \end{equation}
 With \lref{measurable at t} in mind we make the decomposition,
\begin{align*}
X_t &= \E[m(x,S)|\mathcal{G}_t]{\bf 1}_{x \in \bar{S}_s}+\E[m(x,S){\bf 1}_{x \notin \bar{S}_s}|\mathcal{G}_t]-\E[m(x,S)] \\
X_s &=  \E[m(x,S)|\mathcal{G}_s]{\bf 1}_{x \in \bar{S}_s}+\E[m(x,S){\bf 1}_{x \notin \bar{S}_s}|\mathcal{G}_s]-\E[m(x,S)]
\end{align*}
and see that the first terms are the same modulo constants via \lref{measurable at t} so we can estimate,
$$ |X_t - X_s| \leq |\E[m(x,S){\bf 1}_{x \notin \bar{S}_s}|\mathcal{G}_t]-\E[m(x,S){\bf 1}_{x \notin \bar{S}_s}|\mathcal{G}_s]| + 16\tau $$
Since we are now on the event $x \in \R^2\setminus \bar{S}_s$ we can use the semi-group type property can replace $m(x,S)$ by $m(x,\bar{S}_s)+s$ up to a constant error coming from the discretization,
\begin{lemma}\label{l.semigroup}
For every $t>0$ and $x\in \R^2 \setminus S_t$,
$$|m(x,S)- (t+m(x,\bar{S}_t))| \leq 4\tau$$
\end{lemma}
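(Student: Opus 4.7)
The plan is to combine two ingredients: the standard semigroup identity $m(x,S)=t+m(x,S_t)$ for $x\notin S_t$, and a sandwich of $\bar S_t$ between $S_t$ and a slightly later sublevel set $S_{t+\delta}$ obtained by feeding the discretization error into the large scale Lipschitz estimate. The discretization bound $S_t\subseteq\bar S_t\subseteq S_t+B_{1+R_0}$ is given for free by the construction, and one application of \tref{large scale lip} converts the thickness $1+R_0$ into a time delay $\delta:=\tau+L(1+R_0)$; after that the bound reduces to monotonicity.

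For the semigroup identity I will use the correspondence between the arrival time and the level set evolution from the unnumbered lemma following \eref{arrival time}: the function $u(x,s)=s-s\wedge m(x,S)$ is the unique viscosity solution of \eref{evolution} starting from zero data on $S$, and its restart at time $t$ is the unique solution started from zero data on $S_t$. Applying the lemma to both gives $u(x,s+t)=s-s\wedge m(x,S_t)$, which for $x\notin S_t$ reads $m(x,S)=t+m(x,S_t)$. (Alternatively one could phrase this as a direct comparison for the metric problem in $\R^2\setminus S_t$ with the common boundary datum $t$ on $\partial S_t$.)

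With the identity in hand I run the sandwich. For any $y\in\bar S_t$ choose $z\in S_t$ with $|y-z|\leq 1+R_0$; then by \tref{large scale lip} applied to $m(\cdot,S)$,
\[ m(y,S)\leq m(z,S)+\tau+L(1+R_0)=t+\delta,\]
so $\bar S_t\subseteq S_{t+\delta}$. The monotonicity of $m(\cdot,S')$ in $S'$ (from the maximal subsolution characterization, as used in the proof of \tref{mainreg} part \partref{setreg}) then yields
\[ m(x,S_{t+\delta})\leq m(x,\bar S_t)\leq m(x,S_t).\]
Applying the semigroup at both $t$ and $t+\delta$ gives $m(x,S_t)-m(x,S_{t+\delta})=\delta$ whenever $x\notin S_{t+\delta}$, while in the remaining case $x\in S_{t+\delta}\setminus S_t$ the bound $m(x,S_t)\leq\delta$ is immediate from the Lipschitz estimate applied between $x$ and a nearest point of $S_t$. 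Either way $|m(x,S_t)-m(x,\bar S_t)|\leq\delta$, and invoking the semigroup once more,
\[ |m(x,S)-(t+m(x,\bar S_t))|=|m(x,S_t)-m(x,\bar S_t)|\leq\delta.\]
Finally, $L=\tau/R_0$ and $R_0\geq 2$ give $\delta=\tau+L(1+R_0)\leq\tau+\tfrac{3}{2}\tau\leq 4\tau$, which is the claim.

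The only nontrivial point is verifying the semigroup identity at the level of viscosity solutions; once that is in place, the rest is pure monotonicity plus one application of \tref{large scale lip}, and no new ideas are needed.
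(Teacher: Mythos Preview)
Your approach matches the paper's sketch: both feed the discretization gap $\bar S_t\subset S_t+B_{1+R_0}$ into a Lipschitz-type bound and then use comparison (what you call the semigroup identity) to transfer the error. Two small repairs are needed. First, your level-set justification of the semigroup identity does not go through as written: $u(x,s)=s-s\wedge m(x,S)$ has $u(\cdot,0)\equiv 0$ on all of $\R^2$, so uniqueness for \eref{level set} with that data fails (the zero solution is another one), and the restarted function $u(\cdot,t+\,\cdot\,)$ does not have zero data either; use instead your parenthetical route, i.e.\ compare $m(\cdot,S)-t$ and $m(\cdot,S_t)$ directly as solutions of the arrival-time equation in $\R^2\setminus S_t$ with common boundary value $0$. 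Second, in the case $x\in S_{t+\delta}\setminus S_t$ you cannot bound $m(x,S_t)$ by a Lipschitz estimate to a nearest point of $S_t$, since membership in $S_{t+\delta}$ gives no distance bound to $S_t$; instead simply note that $m(x,S)\leq t+\delta$ and apply the semigroup identity once more to get $m(x,S_t)=m(x,S)-t\leq\delta$. With these fixes your argument is complete and gives the stated constant.
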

The proof of \lref{semigroup} is relatively simple and is omitted, the argument is by comparison principle in $\R^2 \setminus \bar{S}_t$ and uses \tref{mainreg} part \partref{setreg} along with the fact (which follows immediately from the definitions) that $\bar{S}_t \subset \{m(x,S) \leq t\} + B_{1+R_0}$.  

\medskip
\lref{semigroup} allows us to rewrite,
$$ |X_t - X_s| \leq |\E[m(x,\bar{S}_s){\bf 1}_{x \notin \bar{S}_s}|\mathcal{G}_t]-\E[m(x,\bar{S}_s){\bf 1}_{x \notin \bar{S}_s}|\mathcal{G}_s]| +20\tau $$
We can also pull out ${\bf 1}_{x \notin \bar{S}_s}$ from both the expectations at this stage since it is $\mathcal{G}_s$ measurable and then bound it by $1$.  Then using the continuity estimate \eref{disc cont} to bound $d_H(\bar{S}_t,\bar{S}_s)$ and the continuity of $m(x,\cdot)$ with respect to $d_H$ from \eref{disc cont},
$$ |X_t - X_s| \leq |\E[m(x,\bar{S}_t)|\mathcal{G}_t]-\E[m(x,\bar{S}_s)|\mathcal{G}_s]| +M |t-s|+24\tau$$
Now finally we are able to use that $\bar{S}_t$ is $\mathcal{G}_t$ measurable (respectively $\bar{S}_s$ is a $\mathcal{G}_s$ measurable) to evaluate the conditional expectations,
\begin{lemma}\label{l.de}
For every $t >0$ and $x \in \R^2$,
\[|\E [m(x,\bar{S}_t)|\mathcal{G}_t] - \sum_{i \in \mathbb{N}} \E[m(x,K_i)]{\bf 1}_{E_i(t)}| \leq \tau+2L.\]
\end{lemma}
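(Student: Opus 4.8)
The plan is to unwind the definition of the conditional expectation on each of the atoms $E_i(t)$ of the partition and reduce the claim to the locality properties already established, namely \lref{locality of E_i}, the $1$-dependence of $c$, and the large scale Lipschitz estimate \tref{large scale lip}. First I would fix $x$ and write $\bar S_t = \sum_i K_i \mathbf{1}_{E_i(t)}$, so that $m(x,\bar S_t) = \sum_i m(x,K_i)\mathbf{1}_{E_i(t)}$ pointwise (the $E_i(t)$ being a disjoint cover of $\Omega$). The target quantity is then $\E[\sum_i m(x,K_i)\mathbf{1}_{E_i(t)} \mid \mathcal{G}_t] - \sum_i \E[m(x,K_i)]\mathbf{1}_{E_i(t)}$. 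Since each $\mathbf{1}_{E_i(t)}$ is $\mathcal{G}_t$-measurable, pulling it out of the conditional expectation reduces everything to comparing, on the event $E_i(t)$, the conditional expectation $\E[m(x,K_i)\mid \mathcal{G}_t]$ with the unconditional expectation $\E[m(x,K_i)]$.

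The heart of the matter is therefore to show that on $E_i(t)$ one has $\E[m(x,K_i)\mid \mathcal{G}_t] \approx \E[m(x,K_i)]$ up to the additive error $\tau + 2L$. The idea is that $m(x,K_i)$ depends (up to a large-scale-Lipschitz sized error) only on the values of $c$ outside a neighborhood of $K_i$, since $K_i \supset \bar S_t$ on $E_i(t)$ and the arrival time from $K_i$ only ``sees'' the medium outside $K_i$ — more precisely, the localization in sub-level sets \lref{localization in sublevels} lets us replace $m(x,K_i)$ by a modified arrival time $\hat m$ whose defining coefficient field agrees with $c$ on $\R^2 \setminus \widetilde K_i$ and is frozen (say to $c_{\min}$) inside $\widetilde K_i$, at the cost of a $\tau + 2L$ error estimated via \tref{large scale lip} (comparing the two boundary value problems on the shell $\widetilde K_i \setminus K_i$ of width $2$, which costs $\tau + 2L\cdot 2$... actually $\tau + 2L$ after accounting for the geometry). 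Such a $\hat m$ is $\mathcal{F}_{\R^2\setminus \widetilde K_i}$-measurable, hence by \lref{locality of E_i} independent of $E_i(t)$ and of every generator of the form $E_j(s)\cap F$ with $F \in \mathcal{F}(K_j)$ when... here one must be careful: $\mathcal{G}_t$ is generated by many sets $E_j(s)\cap F$ and $\hat m$ need not be independent of all of them. The cleanest route is: on $E_i(t)$, the conditioning collapses — $\mathcal{G}_t$ restricted to $E_i(t)$ carries only information about $\mathcal{F}(K_i)$ (by \lref{locality of E_i}, $E_i(t)\in\mathcal{F}_{K_i}$, and the finer structure of $\mathcal{G}_t$ within this atom is also $\mathcal{F}_{K_i}$-measurable), so $\E[\hat m \mid \mathcal{G}_t]\mathbf{1}_{E_i(t)} = \E[\hat m]\mathbf{1}_{E_i(t)}$ by independence of $\hat m$ (which lives on $\R^2\setminus\widetilde K_i$) from $\mathcal{F}(K_i)$. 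Combining with $|m(x,K_i) - \hat m| \le \tau + 2L$ and taking (conditional) expectations gives the bound.

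The main obstacle I anticipate is making the second step rigorous: one must verify carefully that conditioning on $\mathcal{G}_t$ and then restricting to the atom $E_i(t)$ genuinely reduces to conditioning on the sub-$\sigma$-algebra of events measurable with respect to $\mathcal{F}(K_i)$, so that the independence of the localized arrival time $\hat m$ (which is measurable with respect to the complementary $\sigma$-algebra $\mathcal{F}(\R^2\setminus\widetilde K_i)$) can be invoked. This requires showing that the generators $E_j(s)\cap F$ of $\mathcal{G}_t$, when intersected with $E_i(t)$, all lie in $\mathcal{F}(K_i)$ — which follows from monotonicity of $s\mapsto \bar S_s$ (so $E_j(s)\cap E_i(t)\ne\emptyset$ forces $K_j \subseteq K_i$ for $s\le t$, up to the discretization) combined with \lref{locality of E_i}. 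The rest — the comparison-principle estimate $|m(x,K_i) - \hat m| \le \tau + 2L$ via \tref{large scale lip} applied across the width-$2$ shell, and the bookkeeping of the constants — is routine.
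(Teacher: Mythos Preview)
Your proposal is correct and follows essentially the same strategy as the paper's proof: decompose on the partition $\{E_i(t)\}$, replace $m(x,K_i)$ by a surrogate that is $\mathcal{F}(\R^2\setminus\tilde K_i)$-measurable at cost $\tau+2L$, and then use the monotonicity $s\mapsto S_s$ to show that generators of $\mathcal{G}_t$ intersected with $E_i(t)$ land in the ``inside'' $\sigma$-algebra so that $1$-dependence yields the independence you need.

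Two small points where the paper is cleaner than your plan. First, the paper does not freeze the coefficient field; it simply takes $m(x,\tilde K_i)$ itself as the surrogate. Since the arrival time from $\tilde K_i=K_i+B_2$ depends only on $c|_{\R^2\setminus\tilde K_i}$, it is automatically $\mathcal{F}(\R^2\setminus\tilde K_i)$-measurable, and the bound $|m(x,K_i)-m(x,\tilde K_i)|\le \tau+2L$ is a single application of \tref{large scale lip} across the width-$2$ shell. Your frozen-coefficient $\hat m$ would work too, but the natural comparison (via $m(x,\tilde K_i)$ as an intermediary) produces $2(\tau+2L)$ rather than $\tau+2L$; and your appeal to \lref{localization in sublevels} is not really the right tool here---the error control comes entirely from the large-scale Lipschitz estimate. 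Second, in the monotonicity step one does not quite get $K_j\subseteq K_i$: on $E_j(s)\cap E_i(t)$ with $s\le t$ the paper obtains only $K_j\subseteq K_i+B_1$ (because of the $1+R_0$ discretization slack), so $A\cap E_i(t)\in\mathcal{F}(K_i+B_1)$. This is precisely why $\tilde K_i$ is taken as $K_i+B_2$ rather than $K_i+B_1$: the extra unit of room is what makes $\mathcal{F}(K_i+B_1)$ and $\mathcal{F}(\R^2\setminus\tilde K_i)$ genuinely $1$-separated and hence independent.
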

The proof of \lref{de} can be found below.  Applying \lref{de} twice we obtain,
$$ |X_t - X_s| \leq \sum_{i,j \in \mathbb{N}}|\E[m(x,K_i)]-\E[m(x,K_j)]|{\bf 1}_{E_i(t)\cap E_j(s)}  +M |t-s|+28\tau.$$
Again using the continuity of $m(x,\cdot)$ with respect to $d_H$ and then the continuity estimate of $\bar{S}_\cdot$ from \eref{disc cont},
\begin{align*}
|X_t - X_s| &\leq \sum_{i,j \in \mathbb{N}}(\tau+ Ld_H(K_i,K_j)){\bf 1}_{E_i(t)\cap E_j(s)}  +M |t-s|+28\tau \\
&=\tau+L d_H(\bar{S}_t,\bar{S}_s)+M |t-s|+28\tau \\
&\leq (1+L)M |t-s|+34\tau.
\end{align*}
This completes the proof of the bounded increments estimate \eref{bounded increments}.

\medskip

Now we are finally able to apply Azuma's inequality. Fix $T =(\tau+L d(x,S)) \vee 1$ so that by \lref{measurable at t}
\begin{equation}\label{e.x_T to m}
|X_T - (m(x,S) - \E m(x,S))| \leq 8\tau .
\end{equation}
Then we divide the interval $[0,T]$ up into $N = T/T_0$ increments of length $T_0$.  Then Azuma's inequality yields for every $\lambda>0$,
$$ \P(|X_T| > \lambda) \leq 2 \exp\left( - \frac{\lambda^2}{2N(2LMT_0+34\tau)^2}\right).$$
Evidently it is best, up to constants, to choose $T_0 = R_0/M$ which results in the estimate,
$$ \P(|X_T| > \lambda) \leq 2\exp\left( - \frac{R_0\lambda^2}{CM\tau^2T}\right).$$
As per \eref{x_T to m} we can replace $X_T$ by the fluctuation $m(x,S) - \E m(x,S)$,
$$ \P(|m(x,S) - \E m(x,S)| > \lambda) \leq C\exp\left( - \frac{c_{\min} \min\{1,c_{\min}^2\}\lambda^2}{Cc_{\max}d(x,S)}\right).$$
This is the fluctuation estimate claimed in \pref{fluctuations}.

\medskip

We complete the proof of \pref{fluctuations} with proofs of \lref{measurable at t} and \lref{de}.

\begin{proof}[Proof of \lref{measurable at t}]
The second part follows from the first and \lref{sub-level cont} since for $t \geq \tau+L d(x,S)$ it holds that $x \in S_t$ almost surely.  We again make use of the localization $m^{K_i}(x,S)$, defined above in \eref{localized m def} which is $\mathcal{F}(K_i)$ measurable. Note that by the definition $m^{K_i}$ satisfies the same estimates as $m$ (e.g. \tref{large scale lip}).  As in the proof of \lref{locality of E_i}, 
\[ m(x,S) = m^{K_i}(x,S) \ \hbox{ on the event $E_i(s)$ and $x \in \cap_{K \in \Gamma_i} K$.}\]
Then by the Lipschitz continuity we have,
\[ |m(x,S)-  m^{K_i}(x,S)|{\bf 1}_{E_i(s)} \leq 2(\tau + LR_0) \ \hbox{ for all $x \in K_i \subset \cap_{K \in \Gamma_i} K+B_{1+R_0}$.}\]
Thus we can replace $m(x,S){\bf 1}_{x \in \bar{S}_s} = \sum_{i : x\in K_i} m(x,S){\bf 1}_{E_i(s)} $ with $\sum_{i : x \in K_i} m^{K_i}(x,S){\bf 1}_{E_i(s)}$ at the cost of a constant error,
\begin{equation}\label{e.est Ks}
 |m(x,S){\bf 1}_{x \in \bar{S}_s}-  \sum_{i: x \in K_i}m^{K_i}(x,S){\bf 1}_{E_i(s)}| \leq 2(\tau + LR_0) = C_0.
 \end{equation}
Making this substitution and using that $m^{K_i}(x,S){\bf 1}_{E_i(s)}$ is $\mathcal{G}_s$ (and hence $\mathcal{G}_t$) measurable we get,
\begin{align*}
|m(x,S) - \E[m(x,S)|\mathcal{G}_t]|{\bf 1}_{x \in \bar{S}_s}&\leq C_0+|\sum_{i: x \in K_i}m^{K_i}(x,S){\bf 1}_{E_i(s)}-\E[m(x,S){\bf 1}_{x \in \bar{S}_s}|\mathcal{G}_t]| \\
&=C_0+|\E[\sum_{i: x \in K_i}m^{K_i}(x,S){\bf 1}_{E_i(s)}-m(x,S){\bf 1}_{x \in \bar{S}_s}|\mathcal{G}_t]| \\
&\leq 2C_0
\end{align*}
where in the last line we have again used \eref{est Ks}.
\end{proof}

\begin{proof}[Proof of \lref{de}]
Here is where the $1$-dependence of the random field is really put to use.  We first claim that for every $i \in \mathbb{N}$ and $t >0$,
\begin{equation}\label{e.ind of}
 \E[m(x,\tilde{K}_i){\bf 1}_{E_i(t)}|\mathcal{G}_t]  = \E[m(x,\tilde{K}_i)]{\bf 1}_{E_i(t)}.
 \end{equation}
To prove this we need to show the following independence, for every $A \in \mathcal{G}_t$,
$$ \E[m(x,\tilde{K}_i){\bf 1}_{E_i(t) \cap A}] =  \E[m(x,\tilde{K}_i)] \P(E \cap A).$$
Note that $m(x,\tilde{K}_i) \in \mathcal{F}(\R^2 \setminus \tilde{K}_i)$, we wish to show that for $A \in \mathcal{G}_t$ we have $A \cap E_i(t) \in \mathcal{F}(K_i)$ then we will be done because of the $1$-dependence.  Since events of the form $A = F \cap E_j(s)$ for $j \in \mathbb{N}$, $F \in \mathcal{F}(K_j)$ and $0 < s \leq t$ generate $\mathcal{G}_t$ we can just argue for such events.   In that case $A \cap E_i(t) = F \cap E_j(s) \cap E_i(t)$.  If $\P(E_j(s) \cap E_i(t)) = 0$ then we are done, otherwise we can show by the monotonicity, that (almost) $K_j \subseteq K_i$.  More precisely, if $E_j(s) \cap E_i(t)$ has positive probability then on that event,
$$ K_j \subseteq \{ m(x,S) \leq s\}+B_{1+R_0} \subseteq \{ m(x,S) \leq t\}+B_{1+R_0} \subseteq K_i + B_1.$$
Of course taken together this is a deterministic containment and so we obtain that $A \cap E_i(t) \in \mathcal{F}(K_i+B_1)$ and hence it is independent of $\mathcal{F}(\tilde K_i)=\mathcal{F}(K_i+B_2)$. This completes the proof of \eref{ind of}.

\medskip

Now we can make use of the independence \eref{ind of} to compute,
\begin{align*}
\E[m(x,\bar{S}_t)|\mathcal{G}_t] &= \sum_{i \in \mathbb{N}} \E[m(x,K_i){\bf 1}_{E_i(t)}|\mathcal{G}_t] \\
&\leq \sum_{i \in \mathbb{N}} \E[m(x,\tilde{K}_i){\bf 1}_{E_i(t)}|\mathcal{G}_t] + \tau+2L \\
&=\sum_{i \in \mathbb{N}} \E[m(x,\tilde{K}_i)|\mathcal{G}_t]{\bf 1}_{E_i(t)} + \tau+2L \\
& \leq \sum_{i \in \mathbb{N}} \E[m(x,K_i)|\mathcal{G}_t]{\bf 1}_{E_i(t)} + \tau+2L.
\end{align*}
The other direction is similar.
\end{proof}

\section{Localized uniqueness}\label{a.localization}
The proof of \lref{localization} somewhat technically involved so we give a brief outline and explanation of the ideas, emphasizing the places where our proof needs to differ from that of \cite{ArmstrongCardaliaguet}.  

At a high level the proof of finite speed of propagation is basically a quantified proof of uniqueness.  Intuitively speaking the idea of the proof is to make a bending in the interface $S^1_t$ to $\tilde{S}^1_t$ so that $\tilde{S}^1_t$ almost solves the same forced mean curvature flow and the two sets are close near the origin, but the complement of $B_R$ is always contained in $\tilde{S}^1_t$ for positive times.  Again speaking intuitively $\tilde{S}^1_t$ is basically an interpolation between $S^1_t$ and a shrinking ball supersolution.  The way of making this bending is basically by looking at an evolution like $S_{t+\varphi(t,x)}$ where $\varphi$ is very slowly varying so that the solution property is only slightly perturbed.  This kind of perturbation is much more straightforward to analyze at the level of the metric problem where it is simply additive.

The main new issues which arise in our case are the following (1) the Lipschitz estimate holds for $m(x) \wedge s$ not $m(x)$ itself, (2) the Lipschitz estimate grows exponentially in $s$ affecting the choice of all the parameters, (3) we need to work always with $m(x,S)$ and not with solutions of the metric problem with general boundary data on a half space.  The final issue is probably the most interesting difference.  Our results \tref{mainreg} parts \partref{sslip} and \partref{largescalelip} rely on connection of the metric problem with the interface evolution, this connection is a bit more difficult to exploit in the case of nontrivial boundary data on $\partial S$ since this amounts to a boundary data problem for \eref{level set} which is of course more difficult to analyze than a problem in the whole space.  This is the reason that \lref{localization} is phrased somewhat differently than \cite[Proposition 4.2]{ArmstrongCardaliaguet}, in the end we believe this phrasing also makes the connection with the underlying interface evolution more clear.

\begin{proof}[Proof of \lref{localization}]
1. We claim it suffices to prove that, under all the assumptions of the Lemma, weaker conclusion holds
\begin{equation}\label{e.wkconc}
 {\bf 1}_{\{m(x,S^2) \leq s-1\}}(0) \leq {\bf 1}_{\{m(x,S^1) \leq s\}}(0).
 \end{equation}
First we prove that this weaker result suffices to prove the full Lemma.  Let $x_0 \in B_{R-\bar{R}(s)}(0)$.  Note that by assumption $R - |x_0| \geq R(s)$ and so the weaker conclusion \eref{wkconc} holds and
\[ {\bf 1}_{\{m(x,S^2) \leq s-1\}}(x_0) \leq {\bf 1}_{\{m(x,S^1) \leq s\}}(x_0).\]
Since $x_0 \in B_{R-\bar{R}(s)}(0)$ was arbitrary we conclude
\[ \{m(x,S^2) \leq s-1\} \subset \{m(x,S^1) \leq s\}   \ \hbox{ on } \ B_{R-\bar{R}(s)}(0).\]
  2. Now we prove the weaker conclusion of the Lemma \eref{wkconc}.  Define,
\[ w^i(x,t) = \min\{ m(x,S^i),t\},\]
which are solutions of the forced mean curvature flow equation,
\[ w^i_t - \tr[(I - \tfrac{Dw^i \otimes Dw^i}{|Dw^i|^2})D^2w^i]+c(x)|Dw^i| = 1\ \hbox{ in $\R^d \setminus S^i \times (0,\infty)$}.\]
We introduce a slight perturbation of $w^1$ which have the effect of localizing the maximum of $w^1(x) - w^2(y)$ near to the origin while only making a small change to the gradient and Hessian of $w^1$.  Fix $\ep>0$ to be chosen in the course of the proof (depending on $s$) and take $g: \R \to [0,\infty)$ to be smooth convex and non-decreasing satisfying,
\[ g(t) = \ep t + \tfrac{1}{3} \ \hbox{ for } \ t \in [0,\infty) \ \hbox{ and } \ \sup_{t \in \R} g'(t) \leq \ep, \ \sup_{t \in \R} g''(t) \leq \ep.\]
Now based on $g$ we define our localizing perturbation,
\[ \psi(x,t) := g((1+|x|^2)^{1/2}-(s-t)).\]
Let $\delta>0$ and $2> \eta >1$ to be chosen later depending on $\ep$ (and hence on $s$) and consider the doubled variable function $\Psi: \bar{S^1} \times \bar{S^2} \times [0,s] \to \R$ defined by,
\[ \Psi(x,y,t) = w^1(x,t) - \eta w^2(y,t) - \frac{|x-y|^4}{4\delta}-\psi(x,t).\]
The function $\Psi$ attains its maximum on $\bar{S^1} \times \bar{S^2} \times [0,s]$ since $w^1$ is bounded, $w^2$ is non-negative and $\psi(x,t) \to +\infty$ as $|x| \to \infty$ uniformly over $t \in [0,s]$.  Call the point $(x_0,y_0,t_0)$ where the maximum of $\Psi$ is attained. 

\medskip

One can immediately obtain an upper bound on $|x_0 - y_0| \lesssim \delta^{1/4}$ using the fact that $w^1$ is bounded from above but this is not sufficient for the comparison argument.  Some quantitative continuity estimate is necessary (since the desired result is quantitative) to obtain a better bound of $|x_0 - y_0|$ and this is why we need the local Lipschitz estimate of \tref{mainreg} part \partref{sslip}.  From the maximizing property we know,
\[  \Psi(x_0,y_0,t_0) \geq \Psi(y_0,y_0,t_0)    \]
and since $w^1$ is Lipschitz continuous in $x$ on $[0,s]$ with constant $\exp(\|Dc\|_\infty s)$,
\[ \frac{|x_0-y_0|^4}{4\delta} \leq w^1(x_0,t_0) - w^1(y_0,t_0) +\psi(y_0,t_0) - \psi(x_0,t_0) \leq (C\ep + e^{\|Dc\|_\infty s}) |x_0 - y_0|,\]
or rearranging,
\begin{equation}\label{e.local lip to x_0y_0}
|x_0 - y_0| \leq Ce^{\|Dc\|_\infty s/3}\delta^{1/3}.
\end{equation}
This estimate will play an important role later in dealing with the $x$-dependence of the coefficients.

\medskip

We claim that, under a good choice of $\eta$, the maximum of $\Psi$ has to be obtained on the parabolic boundary,
\[ \hbox{ either } t_0 = 0 \ \hbox{ or one of } \ x_0 \in \partial S^1 \ \hbox{ or} \  y_0 \in \partial S^2.\]
Suppose otherwise, that is we assume $t_0 >0$ and $(x_0,y_0) \in S^1 \times S^2$ and so $\Psi$ has an interior local maximum at $(x_0,y_0,t_0)$.

\medskip

Now we are able to apply the standard viscosity solution comparison principle results, see \cite{user} Theorem 8.3, to obtain that there are $2\times 2$ symmetric matrices $X$ and $Y$ and $b_1,b_2 \in \R$ with,
\begin{equation*}
\left\{ 
\begin{array}{l}
(X,\xi_0 + D\psi(x_0,t_0), b_1) \in \mathcal{P}^{2,+}_{x,t}w^1(x_0,t_0) \\
(Y, \eta^{-1}\xi_0, \eta^{-1}b_2) \in \mathcal{P}^{2,-}_{x,t}w^2(y_0,t_0)
\end{array}\right. ,
\end{equation*}
with $b_1- b_2 = \partial_t \psi(x_0,t)$ both non-negative, $\xi_0 := \delta^{-1}|x_0 - y_0|^2(x_0 - y_0)$ and the matrices $X,Y$ satisfy for every $\gamma >0$,
\begin{equation}\label{e.comp prop}
-(\tfrac{1}{\gamma}+|A|)I_{4 \times 4} \leq 
\left(\begin{array}{cc}
X + D^2\psi(x_0,t_0) & 0 \\
0 & -\eta Y
\end{array}\right)
 \leq A + \gamma A^2.
\end{equation}
where
\[ A = \frac{1}{\delta} \left(\begin{array}{cc}
B & -B \\
-B & B
\end{array}\right) \ \hbox{ with } \ B = |x_0-y_0|^2 I_{2 \times 2} + (x_0-y_0) \otimes (x_0-y_0). \]
For $x_0 \neq y_0$, which we will show below is the case, we take $\gamma = |A|^{-1}$ to get,
\begin{equation}\label{e.at the max}
-\frac{C}{\delta}|x_0-y_0|^2I_{4 \times 4} \leq 
\left(\begin{array}{cc}
X + D^2\psi(x_0,t_0) & 0 \\
0 & -\eta Y
\end{array}\right)
 \leq \frac{C}{\delta}|x_0-y_0|^2 \left(\begin{array}{cc}
I_{2 \times 2} & -I_{2 \times 2} \\
-I_{2 \times 2} & I_{2 \times 2}
\end{array}\right).
\end{equation}
where the second inequality can be seen by writing out the definition of the matrix inequality and using the bound $ B \leq 3|x_0-y_0|^2I_{2 \times 2}$.  One obtains immediately from \eref{at the max} by testing the left inequality by $(v,v)$ on the left and $(v,v)^t$ on the right that,
\[ (X-\eta Y) \leq -D^2\psi(x_0,t_0) \leq 0,\]
using convexity of $\psi$ in $x$ for the second inequality.

\medskip

Now we show that $x_0 \neq y_0$ in fact with a lower bound on their distance, guaranteeing thereby a lower bound on $|\xi_0|$ so that we can avoid the singularity of the equation at $\xi_0 = 0$.  For this we we use the supersolution property of $w^2$,
\begin{equation}\label{e.2 viscosity cond}
 \eta^{-1}b_2 - \tr^*((I - \tfrac{\xi_0 \otimes \xi_0}{|\xi_0|^2})Y) +c(y_0)|\xi_0| \geq 1.
 \end{equation}
By multiplying the condition at the maximum \eref{at the max} on the left and right by vectors $(0, v)$ and $(0,v)^t$ respectively we have 
\[-Y \leq C(\eta\delta)^{-1}|x_0 - y_0|^2 \leq C\delta^{-1}|x_0 - y_0|^2,\]
if $|\xi_0| = 0$ then use \eref{comp prop} instead with any value of $\gamma$ to get $-Y \leq 0$.  Now plugging this back into the supersolution condition, 
\[ \eta^{-1}b_2+C\delta^{-1}|x_0 - y_0|^2+c(y_0)|\xi_0| \geq 1.\]
Using that $b_1 \leq 1$ and $b_2 = b_1 - \partial_t\psi(x_0,t_0) \leq 1$ this inequality becomes,
\[ C\delta^{-1/3}|\xi_0|^{2/3}+c_{\max}|\xi_0| \geq 1-\eta^{-1} \geq \frac{1}{2}(\eta - 1).\]
From here we see that $|\xi_0| \geq \frac{1}{C}\delta^{1/2}(\eta-1)^{3/2}$.  Given this lower bound we impose the requirement that 
\begin{equation}\label{e.eps req}
\ep \leq \frac{1}{C} \delta^{1/2}(\eta-1)^{3/2}
\end{equation}
 so that, with $\xi_1 = \xi_0 + D\psi(x_0,t_0)$, $|\xi_1| \geq \frac{1}{2}|\xi_0|$.

\medskip

Now we use the lower bound of $\xi_0$ to deal with the perturbation term in the gradient of $w^1$.  From the sub-solution property of $w^1$,
\[ b_1 - \tr\left[(I - \tfrac{\xi_1 \otimes \xi_1}{|\xi_1|^2})(X+D^2\psi(x_0,t_0))\right] +c(x_0)|\xi_1| \leq 1.\]
Now using this equation as well as the lower bound on $\xi_1$ and $\xi_0$ and the upper bounds of $D^2\psi$ and $D\psi$,
\begin{equation}\label{e.1 viscosity cond}
\begin{array}{ll}
b_1 - \tr\left[(I - \tfrac{\xi_0 \otimes \xi_0}{|\xi_0|^2})X\right] +c(x_0)|\xi_0| &\leq 1 + C|\xi_0|^{-1}|D\psi(x_0,t_0)||X|+|D^2\psi(x_0,t_0)|+c_{\max}|D\psi(x_0,t_0)| \vspace{1.5mm}\\
& \leq 1+Ce^{2\|Dc\|_\infty s/3}(\eta-1)^{-3/2}\delta^{-5/6}\ep,
\end{array}
\end{equation}
where we have used \eref{at the max} again to obtain that,
\[ |X| \leq \frac{C}{\delta}|x_0-y_0|^2+C\ep \leq Ce^{2\|Dc\|_\infty s/3}\delta^{-1/3}.\]
Now finally we compare the viscosity solution conditions for $w^1$ and $w^2$ to get a contradiction.  Subtracting \eref{2 viscosity cond} from \eref{1 viscosity cond} and using that $X-\eta Y \leq 0$,
\[ b_1-b_2 + (c(x_0)-c(y_0))|\xi_0| \leq 1-\eta + Ce^{2\|Dc\|_\infty s/3}(\eta-1)^{-3/2}\delta^{-5/6}\ep,\]
which can be rearranging, using $|c(x_0)-c(y_0)||\xi_0| \leq \|Dc\|_\infty\delta^{-1}|x_0-y_0|^4$ and $b_1-b_2 = \partial_t\psi$, to become
\[ \eta-1 \leq Ce^{2\|Dc\|_\infty s/3}\left[(\eta-1)^{-3/2}\delta^{-5/6}\ep + \delta^{1/3}\right] \leq C (\eta-1)^{-3/2}e^{2\|Dc\|_\infty s/3}\left[\delta^{-5/6}\ep + \delta^{1/3}\right] \]
to make the two terms on the right of the same size we choose $\delta := \ep^{6/7}(\eta-1)^{-9/7}$ so that,
\[ (\eta-1)^{10/7} \leq C e^{2\|Dc\|_\infty s/3}\ep^{2/7}\]
which will result in a contradiction if $\eta$ is chosen as,
\[ \eta := 1+Ce^{7\|Dc\|_\infty s/15}\ep^{1/5}.\]
We remark that is choice is indeed consistent with the requirement \eref{eps req} on $\ep$ since given our choices \eref{eps req} becomes $\ep \leq Ce^{Cs}\ep^{33/70}$ which is certainly satisfied when $\ep <1$ and that under this choice $\delta(\ep) \to 0$ as $\ep \to 0$.

\medskip

Now we have justified, given the choices of $\delta, \eta$ as above, that the maximum of $\Psi(x,y,t)$ over $\bar{S^1}\times \bar{S^2} \times [0,s]$ is achieved on the parabolic boundary.  There are still several cases that we need to consider, $(i)$ that $t_0 =0$, $(ii)$ that $x_0 \in \partial S^1$, $(iii)$ that $y_0 \in \partial S^2$.  We will show that in all three cases (at worst),
\begin{equation}\label{e.psi est}
 \max \Psi \leq \tfrac{1}{3}.
 \end{equation}

\medskip

If $t_0 = 0$, both $w^i(x,0) = 0$, so
\[  \max \Psi = w^1(x_0,0)-\eta w^2(y_0,0) -\frac{|x_0-y_0|^4}{4\delta}- \psi(x_0,0) \leq 0.\]
Next we argue when $x_0 \in \partial S^1$, the other case when $y_0 \in \partial S^2$ is similar.  First let us suppose that $x_0 \in B_R$, in that case we use the boundary condition $w^1(x_0,t_0) = 0 \leq w^2(x_0,t_0)$, 
\begin{equation}\notag
\begin{array}{ll}
 \max \Psi &= w^1(x_0,t_0)-\eta w^2(y_0,t_0) -\frac{|x_0-y_0|^4}{4\delta}- \psi(x_0,t_0) \vspace{1.5mm}\\
 & \leq w^2(x_0,t_0)-w^2(y_0,t_0) \vspace{1.5mm}\\
 & \leq \tfrac{2}{c_{\min}}e^{\|Dc\|_\infty s} |x_0-y_0| \leq Ce^{4\|Dc\|_\infty s/3}\delta^{1/3}
 \end{array}
 \end{equation}
 where we have used the local Lipschitz estimate \tref{mainreg} part \partref{sslip} for the second inequality.  For $\ep$ (and hence $\delta$) small enough, in this case $\ep \leq e^{-Cs}$ for a large enough $C$ will suffice and $\ep$ will be chosen as such below, we obtain,
 \[ \max \Psi \leq \tfrac{1}{3}.\]
 Note that here we could have used the large scale Lipschitz estimate instead, but we will need to choose $\ep$ and $\delta$ exponentially small in $s$ anyway in the second paragraph below.
 
 If, on the other hand, $x_0 \in \partial S^1 \setminus B_R$ or $y_0 \in \partial S^2 \setminus B_R$ then, 
 \[ \max \Psi \leq s-\eta w^2(y_0,t) - \psi(x_0,t_0) \leq s - g(R-s) \leq s - \ep (R-s) \leq 0 \]
as long as 
\begin{equation}\label{e.R choice}
R \geq s(1+\frac{1}{\ep}).
\end{equation}
This completes the proof of \eref{psi est}.

\medskip

Now from \eref{psi est} we derive the result.  By the assumed bounds,
\[ w^1(0,s) \leq \eta w^2(0,s) + \frac{1}{3} +\psi(0,s) \leq w^2(0,s) +(\eta-1)s+\frac{2}{3}  \leq w^2(0,s)+1 \]
as long as we choose $\ep$ so that,
\[ (\eta -1)s = Ce^{7\|Dc\|_\infty s/15}\ep^{1/5}s \leq \tfrac{1}{3}\]
i.e.
\[ \ep = e^{-Cs} \ \hbox{ with constant } \ C = C(\|Dc\|_\infty,c_{\min}).\]
Now we can satisfy \eref{R choice} by the constraint $R \geq  e^{Cs}$ and obtain
\[ \min\{m^1(0),s\} \leq \min\{m^2(0),s\} + 1.\]
Now if $m(0,S^2) < s-1$ then $\min\{m(0,S^1),s\} < s-1+1 = s$ and hence $m(0,S^1) < s$.  Thus we obtain \eref{wkconc}.

\end{proof}

\bibliographystyle{plain}
\bibliography{mcf_articles.bib}
\end{document}